\theoremstyle{definition}
\newtheorem{definition}{Definition}
\newtheorem{theorem}{Theorem}
\newtheorem{lemma}{Lemma}
\newtheorem{corollary}{Corollary} 
\newtheorem{remark}{Remark}
\newtheorem{proposition}{Proposition}
\newcommand{\be}{\begin{equation}}
	\newcommand{\ee}{\end{equation}}
\newcommand{\bea}{\begin{eqnarray*}}
	\newcommand{\eea}{\end{eqnarray*}}
\DeclareMathOperator*{\argmin}{arg\,min}
\newcommand{\x}{{\pmb x}}
\newcommand{\y}{{\pmb {y}}}
\newcommand{\z}{{\pmb z}}
\newcommand{\A}{{\pmb A}}
\newcommand{\tp}{\texttt{T}}
\newcommand{\mS}{\mathcal{S}}
\newcommand{\mA}{\pmb{A}}
\def\lV{\left\lVert}
\def\rV{\right\lVert}
\def\lv{\left\lvert}
\def\rv{\right\lvert}
\begin{document}
	\title{Adversarial Phase Retrieval via Nonlinear Least Absolute Deviation \footnote{This work was supported in part by the NSFC under grant numbers U21A20426, 12071426 and the National Key Research and Development Program of China under grant number 2021YFA1003500, the Project funded by China Postdoctoral Science Foundation 2023M733114.}}

\author{Gao Huang\footnote{ School of Mathematical Science, Zhejiang University, Hangzhou 310027, P. R. China, E-mail address: hgmath@zju.edu.cn}}	
\author{Song Li\footnote{ School of Mathematical Science, Zhejiang University, Hangzhou 310027, P. R. China, E-mail address: songli@zju.edu.cn}}	
\author{Hang Xu\footnote{ Corresponding Author, School of Physics, Zhejiang University, Hangzhou 310027, P. R. China, E-mail address: hangxu@zju.edu.cn}}
\affil{}
\renewcommand*{\Affilfont}{\small\it}
	
	\date{December 12, 2023}
	
	\maketitle
	\begin{abstract}
	We investigate the phase retrieval problem 
	perturbed by dense bounded noise and sparse outliers that can change an adversarially chosen $s$-fraction of the measurement vector. The adversarial sparse outliers may exhibit dependence on both the observation and the measurement.
	We demonstrate that the nonlinear least absolute deviation based on amplitude measurement can tolerate adversarial outliers
	at a fraction of $s^{*,1}\approx0.2043$, while the intensity-based model can tolerate a fraction of $s^{*,2}\approx0.1185$.
	Furthermore, we construct adaptive counterexamples to show that 
	the thresholds are theoretically sharp, thereby showing the presentation of phase transition in the adversarial phase retrieval problem when the corruption fraction exceeds the sharp thresholds. 
This implies that the amplitude-based model exhibits superior adversarial robustness in comparison with the intensity-based model. 
Corresponding experimental results are presented to further illustrate our theoretical findings.
	To the best of our knowledge, our results provide the first theoretical examination of the distinction in robustness performance between amplitude and intensity measurement.
	A crucial point of our analysis is that we explore the exact distribution of some combination of two non-independent Gaussian random variables and present the novel probability density functions to derive the sharp thresholds.
		\end{abstract}
		
	{\bf Keywords: } Phase Retrieval; Adversarial Sparse Outliers; Nonlinear Least Absolute Deviation; Sharp Thresholds; Amplitude and Intensity Measurement.

\section{Introduction}
	Recovering a signal from the magnitude of its linear samples, commonly known as phase retrieval, has gained significant attention in various fields due to its wide range of applications,  including X-ray crystallography \cite{millane1990phase,miao1999extending}, astronomy \cite{fienup1987phase}, microscopy \cite{miao2008extending}, optics \cite{walther1963question} and diffraction imaging \cite{chai2010array}.
	Mathematically, the problem consists of recovering an unidentified discrete signal $\pmb{x}_0\in\mathbb{F}^{n}$ with $\mathbb{F} \in \{ \mathbb{R}, \mathbb{C}\}$ from the general measurements
\bea
b_i = \mathcal{A}_i\left(\x_0\right) + \eta_i,\quad i=1,\cdots,m.
\eea
Here, $\eta_i \in \mathbb{R}$ represents arbitrary measurement noise and the sample $\mathcal{A}_i(\x_0)$ may be of the two forms: $\vert\langle \pmb{a}_{i},\pmb{x}_0 \rangle\vert$ and $\vert\langle \pmb{a}_{i},\pmb{x}_0 \rangle\vert^{2}$, 
called amplitude and intensity measurement respectively with $\pmb{a}_{i} \in \mathbb{F}^{n}$ being the design measurement vectors known as a priori. \cite{shechtman2015phase} offers a comprehensive review of the theory and application of phase retrieval, serving as a valuable resource for readers seeking further insight into this subject matter.

A typical approach to recover  $\pmb{x}_0$ is to consider the least squares (LS) approach i.e. $\ell_2$-loss. 
Cand\`es et al. in \cite{candes2015phase2} developed a gradient-based method---Wirtinger flow on the objective related to the intensity measurements. Afterwards, some algorithms based on Wirtinger flow emerged, including \cite{chen2017solving,wang2017solving,zhang2017nonconvex}. 
\cite{candes2015phase2,chen2017solving} focused on intensity measurement, while
\cite{wang2017solving,zhang2017nonconvex} studied a variant of Wirtinger flow on the amplitude-based model and showed that the amplitude-based algorithms perform better in computational complexity and sample complexity than the intensity-based algorithms.

Since the $\ell_2$-loss relies heavily on the bounded variance of the noise vectors in most of the existing literature, 
the above methods cannot be directly applied when corrupted by heavy-tailed errors or outliers \cite{zhang2018median,duchi2019solving}.
To deal with this case, $\ell_1$-loss is invoked and this leads to the least absolute deviation (LAD) approach:
\bea
\min_{\pmb{x} \in\mathbb{F}^{n}} \| \mathcal{A}\left(\x_0\right) - \pmb{b} \|_1,
\eea
where $\mathcal{A}\left(\x_0\right)$ can be amplitude or intensity measurement.
In real-world applications, outliers may be very common in the measurements due to sensor failure and occlusions. 
And these failure and occlusions can often be adversarial.
Adversarial outliers\footnote{Adversarial outliers are the data points which can be arbitrarily large and may even depend on the measurement system $(\pmb{b}, \pmb{A})$ and ground-truth $\x_0$. For example, the noise $ \tilde{\pmb{\eta}}$ of a response $\pmb{y} = \mathcal{A}(\pmb{x}_0 + \pmb{\eta}) = \mathcal{A}(\pmb{x}_0) + \tilde{\pmb{\eta}}$ is highly dependent on the measurement.} is prevalent in many applications such as video surveillance \cite{li2004statistical}, face recognition \cite{de2003framework} and sensor calibration \cite{li2016low}. 
Dwork et al. \cite{dwork2007price} first explored the adversarial robust recovery in decoding and 
considered the conflict between the adversary, who corrupts a signal, and the decoder, who attempts to reconstruct the message.
There is also a series of work considering robust recovery in the case of measurements being wiped or even adversarially corrupted in various fields including compressed sensing \cite{chen2013robust,karmalkar2019compressed}, matrix completion \cite{li2013compressed}, robust PCA \cite{pimentel2017adversarial,diakonikolas2023nearly}, low-rank matrix recovery \cite{xu2022low}. In addition, the adversarial robustness has also been extensively researched in robust high-dimensional statistics during resent years  \cite{klivans2018efficient,diakonikolas2019efficient,diakonikolas2023nearly}.

However, phase retrieval under outliers or even adversarial outliers was studied in a minority of work. Most of them \cite{hand2017phaselift,li2016low,huang2022outlier} considered the intensity-based model and took use of PhaseLift \cite{balan2009painless,candes2013phaselift} to lift the signal vector $\x_0$ into rank-one matrix $\x_0\x_0^\tp$.
 \cite{yeh2015experimental} demonstrated that amplitude-based cost functions perform better than intensity-based ones by using simulations and experiments, especially when the dataset contains noise or model mis-match.
Furthermore, outliers arise frequently from the phase imaging applications \cite{weller2015undersampled,yeh2015experimental} due to various reasons such as detector failures, recording errors, and missing data or even model mis-match. Such mistakes can even be adversarial, thus considering the theoretical robustness performance of the phase retrieval model with adversarial outliers has great practical significance.

In this paper, we first consider the following amplitude measurements corrupted by dense bounded noise and adversarial sparse outliers:
	\begin{equation}\label{formu1}
		\pmb{b}=\lv\pmb{A}\x_{0}\rv+\pmb{\omega}+\pmb{z},\quad \|\pmb{z}\|_{0}\leq sm,
	\end{equation}
where $\A = [\pmb{a}_1, \dots,\pmb{a}_m]^\tp \in \mathbb{F}^{m\times n}$ is the sampling matrix, $|\pmb{A}\pmb{x}| = [\vert\langle \pmb{a}_{1},\pmb{x} \rangle\vert, \dots, \vert\langle \pmb{a}_{m},\pmb{x}_0 \rangle\vert]$. 
Here, $\pmb{\omega}$ denotes the $\ell_{1}$-bounded noise, $\pmb{z}$ represents the adversarial sparse outliers whose
	nonzero entries can be arbitrarily large and may even depend on the measurement system $(\pmb{b}, \pmb{A})$ and ground-truth $\x_0$, and $s$ is referred to as the adversarial corruption fraction. 
We call it \textbf{\emph{adversarial phase retrieval}} problem based on amplitude measurements.

We study the reconstruction property of the nonlinear least absolute deviation (nonlinear LAD) for amplitude measurement 
	\begin{eqnarray} \label{model1}
			&&\min   \lV\lv\A \x\rv-\pmb{b}\rV_{1}\\
			&&\text{s.t. } \pmb{x}\in \mathcal{K}.		 \nonumber
	\end{eqnarray}
Here, $\mathcal{K}$ is a general set to capture the structure of the signal and we assume throughout that $\x_{0}$ lies in it.
A general structural representation is to let $\mathcal{R}:\mathbb{R}^{n}\to\mathbb{R}$ be a regularization function that reflects some priori knowledge of $\x_{0}$. Then $\mathcal{K}$ can be set as $\mathcal{K}=\{\x\in\mathbb{R}^{n}:\mathcal{R}(\x)\le R\}$ where $R$ is a tuning parameter. 
For example, we can choose $\mathcal{R}(\x)=\lV\x\rV_{1}$ if $\x$ is sparse.
For Gaussian measurements, we show that the nonlinear robust outlier bound condition (nonlinear ROB condition, see Section \ref{robc}) 
holds for any submatrices consisting of a $(1- s)$-fraction of the measurements with high probability under appropriate samples, where $s < s^{*,1} \approx 0.2043$. 
Utilizing the nonlinear ROB condition, we can show the amplitude-based nonlinear LAD \eqref{model1} is additionally robust against a fraction $s^{*,1} \approx 0.2043$ of arbitrary errors.
To the best of our knowledge, it is the first sharp sparsity threshold of adversarial outliers in phase retrieval based on the amplitude-based model.

And \cite{wang2017solving,zhang2017nonconvex,soltanolkotabi2019structured} showed that the amplitude-based algorithms perform better in sample complexity and computational complexity experimentally and theoretically. 
\cite{zhang2018median} demonstrated that amplitude-based algorithms can tolerate a larger fraction of outliers than intensity-based algorithms by experiments but lack of theoretical guarantees.
Therefore, we are encouraged to explore and compare the robust performance of models based on the two kinds of measurements.

In the intensity-based nonlinear LAD model, we consider that
\begin{eqnarray} \label{model3}
			&&\min \lV\lv\mA \x\rv^{2}-\pmb{b}\rV_{1}\\
			&&\text{s.t. }   \pmb{x}\in \mathcal{K},\nonumber			 
	\end{eqnarray}
	with intensity measurements 
$\pmb{b}=\lv\pmb{A}\x_{0}\rv^2+\pmb{\omega}+\pmb{z}$ where $|\pmb{A}\pmb{x}|^2 = [\vert\langle \pmb{a}_{1},\pmb{x} \rangle\vert^2, \dots, \vert\langle \pmb{a}_{m},\pmb{x}_0 \rangle\vert^2]$.
We use the similar analytical approach to give the sharp sparsity threshold $s^{*,2}\approx 0.1185$ of adversarial outliers, 
which is consistent with the bound of \cite{huang2022outlier}.
Compared to \cite{huang2022outlier}, we consider the nonlinear model instead of the one using PhaseLift and remove the Rademacher distribution assumption of the outliers to give more general analysis.

Our results not only give the two sharp sparsity thresholds of adversarial outliers in the models based on amplitude and intensity measurements, but also theoretically analyze the robust performance of the two models and explain the superiority of the amplitude-based nonlinear LAD model (\ref{model1}) compared to the intensity-based nonlinear one (\ref{model3}), since $s^{*,1}$ is larger than $s^{*,2}$. 
We also construct adaptive counterexamples that are related to measurement $\mA$ and observation $\pmb{b}$ to show that the recovery fails when adversarial corruption fraction exceeds $s^{*,1}$ and $s^{*,2}$ in the two cases.  We conduct extensive numerical experiments to corroborate our results.

\subsection{Our Contributions}
We summarize the main contributions of this work.
We first give a general framework in phase retrieval for the analyses of nonlinear LAD model based on a general set $\mathcal{K}$ under the corruption of adversarial sparse outliers. It is an underlying paradigm for dealing with different measurements to elaborate the corresponding ROB conditions. 
We present the probability density function (PDF) of the combination $\bigl|\lv X \rv-\lv Y\rv\bigr|$ and $\lv X\cdot Y \rv$ of two Gaussian variables (may be not normal or independent) $X$, $Y$. The PDF of $\bigl|\lv X \rv-\lv Y\rv\bigr|$ may be first presented in this paper.
Based on this, we give the nonlinear ROB condition to cope with the adversarial outliers.
We then give the sharp sparsity threshold $s^{*,1} \approx 0.2043$ of adversarial outliers in the nonlinear LAD model for amplitude measurement.
For intensity-based nonlinear LAD model, we give the sharp sparsity threshold $s^{*,2} \approx 0.1185$ without the Rademacher distribution assumption for outliers.  
Our analyses of these two models leads to the first theoretical examination of the distinction in robustness performance between amplitude and intensity measurement such that the amplitude-based nonlinear LAD model has superior adversarial robustness in comparison to the intensity-based one.

%




	
	\subsection{Comparisons to Related Work}\label{related}

A majority of existing work has considered stable performance in phase retrieval \cite{candes2013phaselift,eldar2014phase,chen2017solving,zhang2017nonconvex,huang2024performance}.
These work aimed to recover the signal from observations with bounded noise or random noise.
Only a handful of papers analyzed the phase retrieval problem in the presence of outliers, especially adversarial outliers \cite{hand2017phaselift,li2016low,duchi2019solving}.
\cite{hand2017phaselift} proved that the PhaseLift with LAD can tolerate a small, fixed fraction of gross outliers and
\cite{li2016low} restricted the signs of sparse noise to be generated from Rademacher distribution and 
extended the performance guarantee to the general low-rank PSD matrix case. 
\cite{duchi2019solving} developed the prox-linear algorithm to solve phase retrieval problems even with adversarially faulty measurements. 
Most of them did not consider the adaptive outliers but set the outliers to obey some specified distribution such as Gaussian or uniform distribution in numerical simulation. \cite{huang2022outlier} provided corresponding threshold $s^{*}\approx0.1185$ for robust PhaseLift model, considering intensity measurement.
They also assumed  that the signs of adversarial sparse outliers follow the Rademacher distribution as \cite{li2016low}.
Our paper considers adversarial sparse outliers without any additional assumptions and we give the sharp theoretical thresholds of the adversarial corruption fraction $s$ for both amplitude and intensity measurement. 

Adversarial analysis could be at the intersection of two lines of research, signal recovery and privacy-preservation. 
In decoding, \cite{dwork2007price} first provided that adversary accounts for at most 0.239 in the measurement by statistical methods, and the result has been successively extended to compressed sensing \cite {karmalkar2019compressed} and low-rank matrix recovery \cite{xu2022low}. These work could be utilized in privacy-preservation, which is a very important and popular theme in the era of big data. For example, \cite{zhou2009compressed} showed that a similar setting can be used for sparse regression and analyzed its privacy properties. Besides, researchers have applied deep learning to solve the phase retrieval problem such as \cite{kappeler2017ptychnet,metzler2018prdeep}.
They used convolutional neural network to denoise and create new algorithms. Their algorithms are exceptionally robust to noise in simulation but lack of theoretical analysis. 
We hope that our work serves as an important step towards privacy-preserving phase retrieval and decoding by nonlinear encryption, and also give some theoretical support for these novel algorithms in the future.

	\subsection{Notations and Preliminaries}
	Throughout this paper, we use the following notations. We first define two different metrics.	
	For amplitude-based model 
	we use metric
	\begin{equation}\label{dist1}
		\mathrm{dist}_{1}\left(\x,\y\right)=\min\left\{\lV\x-\y \rV_2,\lV\x+\y \rV_2\right\}
	\end{equation}
	and for intensity-based model the distance between $\x$ and $\y$ can be defined as
		\begin{equation}\label{dist2}
		\mathrm{dist}_{2}\left(\x,\y\right)=\lV\x-\y \rV_2\cdot\lV\x+\y \rV_2.
	\end{equation}
	Evidently, we have  $\left(\mathrm{dist}_{1}\left(\x,\y\right)\right)^{2}\le\mathrm{dist}_{2}\left(\x,\y\right)$.
	For two non-negative real sequences $\{a_t\}_t$ and $\{b_t\}_t$, we write $b_t = O\left(a_t\right)$ (or $b_t \lesssim a_t$) if $b_t \leq Ca_t$.
	For $\alpha \geq 1$, the $\psi_\alpha$-norm (which is the Orlicz norm taken with respect to function $\exp\left(x^\alpha\right) -1$) of a random variable $X$ is defined as 
	\begin{eqnarray*}
		\lV X\rV_{\psi_\alpha} := \inf\{ t >0: \mathbb{E} \exp(\lv X\rv^\alpha/t^\alpha) \leq 2\}. 
	\end{eqnarray*}
	In particular, $\alpha=2$ gives the sub-Gaussian norm and $\alpha=1$ gives the sub-exponential norm. The random variable $X$ is called sub-Gaussian if $\lV X\rV_{\psi_2}<\infty$ and sub-exponential if $\lV X\rV_{\psi_1}<\infty$.  The Gaussian error function is denoted as $\text{erf}\left(x\right)=\frac{2}{\sqrt{\pi}}\int_{0}^{x}\exp\left(-t^{2}\right)dt.$ 
	
	To describe the structure of possible signals, we mainly introduce tools for characterizing the complexity of set $\mathcal{K}$. If we assume that $\x$ lies in some set $\mathcal{K} \subset B_2^n$ where $B_2^n$ denotes the Euclidean unit ball in $\mathbb{R}^n$, then a key characteristic of structure  of the signal set   
	is its \textit{Gaussian width} $w\left(\mathcal{K}\right)$, defined as  
	\begin{equation}\label{Gaussian width}
		w\left(\mathcal{K}\right) := \mathbb{E} \sup_{\x \in \mathcal{K}} \langle  \pmb{g} , \x \rangle,
	\end{equation}
	where $\pmb{g}\sim\mathcal{N}\left(0,\pmb{I}_{n}\right)$.  In particular, denote $\text{cone}\left(\mathcal{K}\right):=\{tx:t\geq0, x\in \mathcal{K}\}$, then $w^2\left(\text{cone}\left(\mathcal{K}\right)\cap \mathbb{S}^{n-1}\right)$ is a meaningful approximation for dimension \cite{vershynin2018high}, it can also be said that $w^2\left(\mathcal{K}\right)$ can serve as the \textit{effective dimension} of set $\mathcal{K}$.	
	We point out some common examples of Gaussian width in the literature.    
			 If $\mathcal{K} = B_2^n$ or $\mathcal{K} = \mathbb{S}^{n-1}$ then $w\left(\mathcal{K}\right) =\mathcal{O}\left(\sqrt{n}\right)$.
		If $\mathcal{K}=\mathcal{K}_{n,s}:=\{\x\in\mathbb{R}^{n}:\lV\x\rV_{2}=1,\lV\x\rV_{1}=\sqrt{s}\}$, then $w\left(\mathcal{K}\right)=\mathcal{O}\left(\sqrt{s\log\left(en/s\right)}\right)$. The relationship between Gaussian width and covering number can be obtained by Sudakov's minoration inequality in $\mathbb{R}^{n}$  \cite{talagrand2005generic}. Let $\mathcal{N}\left(\mathcal{K},\delta\right)$ be the $\delta$-covering number of set $\mathcal{K}$ then for any $\delta\textgreater0$, we have
	\begin{eqnarray}\label{Sudakov}
		w\left(\mathcal{K}\right)\ge c\delta\sqrt{\log \mathcal{N}\left(\mathcal{K},\delta\right)}.
	\end{eqnarray}
It can be seen that Gaussian width can provide an upper bound for covering number.

\subsection{Organization }
The rest of this paper is organized as follows. 
The main results are presented in Section \ref{results}. In this section, we provide recovery guarantees based on the amplitude-based nonlinear LAD model (\ref{model1}) and intensity-based nonlinear LAD model (\ref{model3}). 
In Section \ref{frame}, we give a framework for adversarial robust recovery in phase retrieval.
In Section \ref{nonROBC}, we introduce the notion of nonlinear ROB condition and show that it holds with high probability 
for Gaussian measurements.
We provide the proofs for our main results in Section \ref{main results}. 
Proof of 
Dvoretzky-Kiefer-Wolfowitz type inequality is presented in Appendix  \ref{conin} and related probability density function is presented in Appendix \ref{RV}. Partial proofs of nonlinear ROB condition are presented in Appendix \ref{partial}.
	
\section{Main Results}\label{results}
	We mainly focus on the amplitude-based nonlinear LAD model (\ref{model1}) and  intensity-based nonlinear LAD model (\ref{model3}), and provide corresponding sharp thresholds for the adversarial corruption fraction. 
	Our theoretical conclusion aligns with the empirical evidence, indicating that the amplitude-based nonlinear LAD model has superior adversarial robustness in comparison to the intensity-based one. To this end, we assume a tractable model in which the design vectors $\{ \pmb{a}_i \}_i$ are i.i.d standard Gaussian vectors. 	
	\subsection{Adversarial Robust Recovery for Amplitude Measurement}
	In this subsection, we provide theoretical results for the nonlinear amplitude-based LAD model (\ref{model1}). Our results show that 
	the solution of (\ref{model1}) is robust to any fraction of adversarial corruption $s\textless s^{*,1}\approx 0.2043$. 
	\begin{theorem} \label{theorem1}
		Suppose that $s\textless s^{*,1}-2\xi$ and $s=\lV\pmb{z}\rV_{0}/m$, where $s^{*,1}\approx 0.2043$. If the
		number of measurements satisfies 
		\begin{eqnarray}\label{measurement}
			m \gtrsim \xi^{-4}\cdot w^2\left(\text{cone}\left(\mathcal{K}\right)\cap \mathbb{S}^{n-1}\right),
		\end{eqnarray} 
		then for all $\pmb{x}_0\in \mathbb{R}^{n}$ with probability at least $1-\mathcal{O}\left(e^{-cm\xi^{2}}\right)$, the solution $\x_{\star}$ of (\ref{model1}) satisfies
		\begin{eqnarray}\label{noise}
	\mathrm{dist}_{1}\left(\x_{\star},\x_{0}\right)\leq C\frac{\lV\pmb{\omega}\rV_1}{m},	
			\end{eqnarray}
		where $C,c$ are positive numerical constants related to adversarial corruption fraction $s$. 
	\end{theorem}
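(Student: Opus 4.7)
The overall plan is to deduce the error bound from a deterministic nonlinear robust outlier bound (ROB) condition, and then verify this condition holds for Gaussian sampling with the stated probability. Since $\pmb{b}=|\A\x_0|+\pmb{\omega}+\pmb{z}$, the feasibility of $\x_0$ combined with the optimality of $\x_\star$ in (\ref{model1}) gives
$$\||\A\x_\star|-\pmb{b}\|_1 \leq \||\A\x_0|-\pmb{b}\|_1 = \|\pmb{\omega}+\pmb{z}\|_1.$$
Splitting the sum over the unknown support $S$ of $\pmb{z}$ and its complement, and applying the reverse triangle inequality on $S$, the terms $|\omega_i+z_i|$ cancel on both sides and one obtains
$$\sum_{i\in S^c}\bigl|\,|a_i^\tp\x_\star|-|a_i^\tp\x_0|\,\bigr| - \sum_{i\in S}\bigl|\,|a_i^\tp\x_\star|-|a_i^\tp\x_0|\,\bigr| \leq 2\|\pmb{\omega}\|_1.$$
The nonlinear ROB condition (Section \ref{nonROBC}) is precisely a matching lower bound of this left-hand side by $\alpha\, m\,\mathrm{dist}_1(\x_\star,\x_0)$ for some $\alpha>0$, uniform over $\x_\star\in\mathcal{K}$ and all $|S|\leq sm$. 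Dividing through by $\alpha m$ yields the estimate (\ref{noise}) with $C=2/\alpha$.

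To establish the ROB condition, I would argue first in expectation and then uniformize. For any fixed candidate $\x$, write $f_i(\x):=\bigl|\,|a_i^\tp\x|-|a_i^\tp\x_0|\,\bigr|$; the worst-case adversary selects $S$ to coincide with the top $sm$ values of $\{f_i(\x)\}$. Since $\pmb{a}_i\sim\mathcal{N}(0,\I_n)$, the pair $(a_i^\tp\x,a_i^\tp\x_0)$ is bivariate Gaussian with covariance determined by $\|\x\|_2$, $\|\x_0\|_2$, and $\langle\x,\x_0\rangle$. Using the explicit PDF of $\bigl|\,|X|-|Y|\,\bigr|$ derived in Appendix \ref{RV}, I would compute
$$G(\x):=\mathbb{E} f_i(\x) - 2\,\mathbb{E}\bigl[f_i(\x)\mathbf{1}_{\{f_i(\x)\geq q_{1-s}(\x)\}}\bigr],$$
where $q_{1-s}(\x)$ denotes the $(1-s)$-quantile of $f_i(\x)$, and normalize by $\mathrm{dist}_1(\x,\x_0)$. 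The largest $s$ for which the resulting ratio is uniformly positive over the feasible cone is the sharp threshold, which evaluates to $s^{*,1}\approx 0.2043$.

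The passage from pointwise expectation to a uniform empirical lower bound relies on two concentration ingredients. At fixed $\x$, the Dvoretzky--Kiefer--Wolfowitz type inequality proved in Appendix \ref{conin} concentrates the top-$sm$ sum of order statistics around its mean. To make this uniform in $\x$, I would invoke an $\varepsilon$-net on $\text{cone}(\mathcal{K})\cap\mathbb{S}^{n-1}$ whose log-cardinality is controlled by the Gaussian width through Sudakov's minoration (\ref{Sudakov}), combined with the Lipschitz continuity of $\x\mapsto f_i(\x)$ and Bernstein-type control on the sub-exponential increments. The two nested layers of approximation---a $\xi$-cushion in the sparsity budget and a $\xi$-accuracy in the empirical sums---together produce the $\xi^{-4}$ scaling in (\ref{measurement}) and the $e^{-cm\xi^2}$ failure probability.

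The principal obstacle is not the overall architecture but the sharpness of the threshold. One must work with the exact (not asymptotic) joint density of $\bigl(|a_i^\tp\x|,|a_i^\tp\x_0|\bigr)$ at arbitrary correlation, evaluate the truncated moment at the $(1-s)$-quantile in closed form, and then reduce the infimum of $G(\x)/\mathrm{dist}_1(\x,\x_0)$ over the feasible cone to a one-parameter minimization driven by $\langle\x,\x_0\rangle/(\|\x\|_2\|\x_0\|_2)$. Only then does the defining transcendental equation (expressible through the Gaussian error function $\text{erf}$) collapse to a single scalar whose numerical root is $s^{*,1}\approx 0.2043$; any looseness in the truncated-moment computation or in the dimensional reduction would yield a strictly worse threshold and fail to separate the amplitude-based constant from the intensity-based $s^{*,2}\approx 0.1185$ obtained analogously for (\ref{model3}).
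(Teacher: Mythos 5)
Your proposal follows essentially the same route as the paper: the same optimality-plus-splitting argument reducing the error bound to a nonlinear ROB condition with $C=2/\alpha$, the same balance-function/quantile analysis built on the exact PDF of $\bigl|\lv X\rv-\lv Y\rv\bigr|$, and the same Dvoretzky--Kiefer--Wolfowitz plus net/Sudakov uniformization yielding the $\xi^{-4}$ sample size and $e^{-cm\xi^2}$ failure probability. The one imprecision is that the final reduction in the amplitude case is a \emph{two}-parameter minimization over the correlation $\rho$ and the norm ratio $\alpha=\lV\y\rV_{2}/\lV\x\rV_{2}$ (both of which the folded-normal density depends on), not a single scalar driven by the normalized inner product alone; with that correction your sketch matches the paper's proof.
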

	\begin{remark}\label{re3}
	By (\ref{noise}) we have $	\mathrm{dist}_{1}\left(\x_{\star},\x_{0}\right)\lesssim\frac{\lV\pmb{\omega}\rV_{1}}{m}\le\frac{\lV\pmb{\omega}\rV_{2}}{\sqrt{m}}$.
	Thus, the estimation error in our nonlinear LAD model has a better error bound than LS model. 
		\end{remark}	
	\begin{remark}\label{re2}
	In the case of $\pmb{\omega}=0$, the ground-truth $\x_{0}$ can be exactly reconstructed via (\ref{model1}) 
		even adversarial corruption fraction approaches $s^{*,1}\approx 0.2043$. 
	If we choose $\mathcal{K}=\mathbb{R}^{n}$ then we have $m=\mathcal{O}\left(n\right)$ and if $\mathcal{K}=\mathcal{K}_{n,s}$ then $m=\mathcal{O}\left(s\log\left(en/s\right)\right)$. Thus the number of measurement (\ref{measurement}) is nearly optimal in many cases. 
	\end{remark}
	In the following theorem, we also illustrate that in the case of dense noise $\pmb{\omega}=0$, (\ref{model1}) exhibits a significant inability to accurately reconstruct the ground-truth $\x_{0}$ when adversarial sparse fraction $s>s^{*,1}$.
	It indicates that the threshold $s^{*,1}\approx 0.2043$ is sharp.

\begin{theorem} \label{theorem3}
		Set $s\textgreater s^{*,1}+2\xi$ where $s^{*,1}\approx 0.2043$. If $\pmb{\omega}=\pmb{0}$ and $\mathcal{K}=\mathbb{R}^{n}$, then for any $\x_0\in \mathbb{R}^{n}$, there exists  an adversarial sparse outlier $\pmb{z}$ with $\Vert \pmb{z}\Vert_{0}=sm$, such that the solution of (\ref{model1}) is not exactly the ground-truth $\x_0$ with probability exceeding $1-\mathcal{O}\left(e^{-cm\xi^{2}}\right)$.	
	\end{theorem}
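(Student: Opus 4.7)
The plan is to build an explicit adaptive adversarial outlier $\pmb{z}$ so that some $\x^{*}\neq\pm\x_{0}$ strictly beats $\x_{0}$ in the objective of (\ref{model1}). By the rotational invariance of the Gaussian design I may fix $\x_{0}$, and parameterize the competing candidate by the angle $\theta\in(0,\pi)$ between $\x^{*}$ and $\x_{0}$, taking $\|\x^{*}\|_{2}=\|\x_{0}\|_{2}$ so that both signals live on the same sphere. The value of $\theta$ will be chosen last.

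Set $d_{i}:=|\langle\pmb{a}_{i},\x^{*}\rangle|-|\langle\pmb{a}_{i},\x_{0}\rangle|$ for $i=1,\dots,m$, let $S\subset[m]$ index the $sm$ largest values of $|d_{i}|$, and put $z_{i}=d_{i}$ for $i\in S$ and $z_{i}=0$ otherwise. Then $\|\pmb{z}\|_{0}=sm$ and, since $\pmb{\omega}=\pmb{0}$, the objective of (\ref{model1}) at $\x_{0}$ equals $\|\pmb{z}\|_{1}=\sum_{i\in S}|d_{i}|$, whereas at $\x^{*}$ it equals $\sum_{i\notin S}|d_{i}|$. Consequently $\x_{0}$ fails to be a minimizer whenever
\[
\sum_{i\in S}|d_{i}| \;>\;\sum_{i\notin S}|d_{i}|.
\]

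The $d_{i}$ are i.i.d.\ copies of $D=|\langle\pmb{a},\x^{*}\rangle|-|\langle\pmb{a},\x_{0}\rangle|$, whose density $f_{|D|}$ is the explicit one derived in Appendix \ref{RV} and depends only on $\theta$. Applying the Dvoretzky-Kiefer-Wolfowitz type inequality of Appendix \ref{conin}, the empirical CDF of $\{|d_{i}|\}_{i=1}^{m}$ lies within $\mathcal{O}(\xi)$ of the population CDF uniformly in its argument, with probability at least $1-\mathcal{O}(e^{-cm\xi^{2}})$. Translating this into control on the two tail sums, the displayed inequality is governed in expectation by the gap
\[
\Delta(s,\theta)\;:=\;\int_{q_{1-s}(\theta)}^{\infty}\! t\,f_{|D|}(t)\,dt \;-\;\int_{0}^{q_{1-s}(\theta)}\! t\,f_{|D|}(t)\,dt,
\]
where $q_{1-s}(\theta)$ denotes the $(1-s)$-quantile of $|D|$. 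By the very construction of the nonlinear ROB condition used for Theorem \ref{theorem1}, the sharp constant $s^{*,1}\approx 0.2043$ is the infimum over $\theta\in(0,\pi)$ of the unique $s$ that zeros $\Delta$. Hence for any $s>s^{*,1}+2\xi$ I can select $\theta$ so that $\Delta(s,\theta)\geq c'\xi$, and the concentration of the empirical CDF upgrades this to the strict inequality above with the claimed probability.

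The principal obstacle is the identification of the extremal angle $\theta$ and the verification that $\Delta(\cdot,\theta)$ is strictly monotone and Lipschitz in $s$ on a neighborhood of the critical point, so that the buffer $2\xi$ in the hypothesis really produces the additive gap $\Omega(\xi)$ needed to push the DKW-type estimate through. Once the closed-form PDF of Appendix \ref{RV} is plugged in, pinning down the numerical value $0.2043$ reduces to a one-dimensional optimization, and the rest is an adaptive mirror image of the forward construction already assembled for Theorem \ref{theorem1}.
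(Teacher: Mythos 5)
Your overall architecture is the same as the paper's: pick a competitor $\x_{\star}$, set $\pmb{z}$ equal to $\lv\mA\x_{\star}\rv-\lv\mA\x_{0}\rv$ on the indices of its $sm$ largest entries, observe that the objective at $\x_{0}$ is the top $s$-fraction of the $\ell_{1}$ mass while at $\x_{\star}$ it is the bottom $(1-s)$-fraction, and then use the DKW-type inequality plus concentration of the truncated sums (the paper's Lemma \ref{C}) to turn the population-level sign of the balance function into a high-probability strict inequality. That part is sound.

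The genuine gap is your restriction $\lV\x^{*}\rV_{2}=\lV\x_{0}\rV_{2}$. The threshold $s^{*,1}$ in this paper is defined by minimizing the balance function over a \emph{two}-parameter family: the correlation $\rho$ \emph{and} the norm ratio $\alpha=\lV\y\rV_{2}/\lV\x\rV_{2}\in[0,1]$; the paper's counterexample correspondingly takes $\x_{\star}=\tilde{\rho}_{1}\tilde{\alpha}\x_{0}+\sqrt{1-\tilde{\rho}_{1}^{2}}\,\tilde{\alpha}\x_{0}^{\perp}$ with $\lV\x_{\star}\rV_{2}=\tilde{\alpha}$, and the extremizer has $\tilde{\alpha}<1$ (cf.\ Figure \ref{Fig_sap}(b), which localizes it in $\alpha\in[0,0.8]$). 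Your one-parameter family fixes $\alpha=1$, and on that slice the critical fraction is strictly larger than $s^{*,1}$: e.g.\ at $\alpha=1,\rho=0$ the normalized variable is $\min(\lv U\rv,\lv V\rv)$ with $U,V$ i.i.d.\ standard normal, whose balance point gives $s_{0,1}\approx 0.224$, and the value only increases toward the half-normal constant $\approx 0.239$ as $\rho\to1$. Hence for $s\in\left(s^{*,1}+2\xi,\ \min_{\rho}s_{\rho,1}\right)$ your quantity $\Delta(s,\theta)$ is \emph{negative} for every angle $\theta$, and no choice of $\theta$ yields the required gap $\Delta(s,\theta)\ge c'\xi$; the construction produces no counterexample precisely in the regime that makes the threshold sharp. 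The fix is to let the adversary also shrink the competitor's norm, i.e.\ optimize over $(\rho,\alpha)$ as the paper does. (A minor further slip: the density of $\bigl|\lv X\rv-\lv Y\rv\bigr|$ you need is the one in Lemma \ref{x-y}, not the product density of Appendix \ref{RV}, which pertains to the intensity model.)
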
	
	
	\subsection{Adversarial Robust Recovery for Intensity Measurement}
	We provide theoretical results for the nonlinear intensity-based LAD model (\ref{model3}) where the solution of (\ref{model3}) is robust to any fraction of corruptions $s\textless s^{*,2}\approx 0.1185$ and in the case $\pmb{\omega}=0$, (\ref{model3}) fails to exactly recover the vector $\x_{0}$ when $s$ exceeding $s^{*,2}$.
	
	\begin{theorem} \label{theorem2}
		Suppose that $s\textless s^{*,2}-2\xi$ and $s=\lV\pmb{z}\rV_{0}/m$, where $s^{*,2}\approx 0.1185$. If the
		number of measurements satisfies 
		\begin{eqnarray*}
			m \gtrsim \xi ^{-4}\cdot w^2\left(\text{cone}\left(\mathcal{K}\right)\cap \mathbb{S}^{n-1}\right),
		\end{eqnarray*} 
		then for all $\pmb{x}_0\in \mathbb{R}^{n}$ with probability at least $1-\mathcal{O}\left(e^{-cm\xi^{2}}\right)$, the solution $\x_{\star}$ of (\ref{model3}) satisfies
		\begin{eqnarray}\label{d2}
                           \mathrm{dist}_{2}\left(\x_{\star},\x_{0}\right)\leq C\frac{\lV\pmb{\omega}\rV_1}{m},	
			\end{eqnarray}
		where $C$ and $c$ are positive numerical constants related to $s$. 	
	\end{theorem}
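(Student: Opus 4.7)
The plan is to adapt the argument sketched for Theorem~\ref{theorem1} to the intensity regime, using the framework of Section~\ref{frame} and the intensity version of the nonlinear ROB condition of Section~\ref{nonROBC}, with the product $\lv X\cdot Y\rv$ (for two possibly correlated standard Gaussians) replacing $\bigl|\lv X\rv-\lv Y\rv\bigr|$ as the driving random variable. I begin by exploiting optimality of $\x_\star$ in (\ref{model3}). Setting $r_i=\lv\la\pmb{a}_i,\x_\star\ra\rv^{2}-\lv\la\pmb{a}_i,\x_{0}\ra\rv^{2}$ and $T=\text{supp}(\pmb{z})$, the decomposition $b_i=\lv\la\pmb{a}_i,\x_{0}\ra\rv^{2}+\omega_i+z_i$ together with triangle inequalities applied separately on $T$ and $T^{c}$, followed by cancellation of $\sum_{i\in T}\lv z_i\rv$ on both sides, yields the cone-type inequality
\begin{equation*}
\sum_{i\notin T}\lv r_i\rv-\sum_{i\in T}\lv r_i\rv\le 2\lV\pmb{\omega}\rV_{1}.
\end{equation*}

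The next step is to recognize $r_i=\la\pmb{a}_i,\h\ra\cdot\la\pmb{a}_i,\barh\ra$ with $\h=\x_\star-\x_{0}$ and $\barh=\x_\star+\x_{0}$, so that $\lv r_i\rv$ is distributed as $\lV\h\rV_{2}\lV\barh\rV_{2}\cdot\lv XY\rv$ where $(X,Y)$ are standard Gaussians with correlation $\rho=\la\h,\barh\ra/(\lV\h\rV_{2}\lV\barh\rV_{2})\in[-1,1]$. Since $\mathrm{dist}_{2}(\x_\star,\x_{0})=\lV\h\rV_{2}\lV\barh\rV_{2}$, the bound (\ref{d2}) will follow once I establish the intensity nonlinear ROB condition: uniformly over such $\h,\barh$ in the cone structure induced by $\mathcal{K}$ and over index sets of size at most $sm$,
\begin{equation*}
\sum_{i\notin T}\lv r_i\rv-\sum_{i\in T}\lv r_i\rv\ge\alpha(s)\cdot m\cdot\lV\h\rV_{2}\lV\barh\rV_{2}
\end{equation*}
for some $\alpha(s)>0$ vanishing as $s\uparrow s^{*,2}$. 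The sharp constant $s^{*,2}\approx 0.1185$ emerges from the population analogue: in expectation the left-hand side is at least $m\bigl(\mathbb{E}\lv XY\rv-2\,\mathbb{E}[\lv XY\rv\mathbf{1}_{\lv XY\rv\ge q_{s}}]\bigr)\lV\h\rV_{2}\lV\barh\rV_{2}$, where $q_{s}$ is the $(1-s)$-quantile of $\lv XY\rv$, and $s^{*,2}$ is defined as the value of $s$ that makes this coefficient vanish. The explicit PDF of $\lv XY\rv$ for correlated Gaussians derived in Appendix~\ref{RV} is plugged in, and the worst case over $\rho\in[-1,1]$ is computed to pin down the numerical value $0.1185$.

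The transition from the population statement to a high-probability uniform bound is carried out by combining the Dvoretzky-Kiefer-Wolfowitz type inequality of Appendix~\ref{conin}---applied to the empirical ordered statistics of $\{\lv r_i\rv\}_{i\le m}$ so that the partial sum over the worst $sm$-subset deviates from its expectation by at most $O(\xi)$ on an event of probability $1-\mathcal{O}(e^{-cm\xi^{2}})$---with a standard covering/chaining argument over $\mathcal{K}$ whose complexity is controlled by $w^{2}(\text{cone}(\mathcal{K})\cap\mathbb{S}^{n-1})$; the $\xi^{-4}$ factor in the sample size reflects that one $\xi$ is absorbed into the corruption margin $s^{*,2}-s$ and another into the concentration slack. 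The main obstacle I anticipate is the sharp uniform ROB lower bound: tracking how the density and $(1-s)$-quantile of $\lv XY\rv$ depend on the correlation $\rho$ between $\la\pmb{a}_i,\h\ra$ and $\la\pmb{a}_i,\barh\ra$, and verifying that the worst case over $\rho\in[-1,1]$ still yields a strictly positive margin whenever $s<s^{*,2}$, is exactly the point at which the intensity model diverges quantitatively from the amplitude model and produces the smaller threshold. Once this is in hand, dividing the cone-type inequality by $\alpha(s)m$ yields (\ref{d2}) with $C=2/\alpha(s)$.
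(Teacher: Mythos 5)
Your proposal follows essentially the same route as the paper: the cone-type inequality from optimality is exactly the Adversarial Sparse Outlier Separation Condition (Lemma \ref{ASOSC1}), the reduction of $r_i$ to $\la\pmb{a}_i,\h\ra\cdot\la\pmb{a}_i,\barh\ra$ and the balance-function characterization of $s^{*,2}$ via the PDF of $\lv XY\rv$ is precisely the intensity nonlinear ROB condition (Theorem \ref{k=2} with $\mathcal{J}(s)$), and the DKW-plus-covering concentration step with the $\xi^{-4}$ sample size matches the paper's Steps 2--3. The approach and the final constant $C=2/\alpha(s)$ (the paper's $\widetilde{\mathcal{C}}(s)$) are correct as stated.
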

		\begin{remark}
By (\ref{d2}), we have $\mathrm{dist}_{1}\left(\x_{\star},\x_{0}\right)\lesssim\frac{\sqrt{\lV\pmb{\omega}\rV_1}}{m^{1/2}}$. Besides, we can confirm that 
$  \mathrm{dist}_{1}\left(\x_{\star},\x_{0}\right)\cdot\lV\x_{0}\rV_{2}\le\mathrm{dist}_{2}\left(\x_{\star},\x_{0}\right)$. Thus, we can get 
\begin{eqnarray}\label{bound2}
 \mathrm{dist}_{1}\left(\x_{\star},\x_{0}\right)\lesssim\min\left\{\frac{\sqrt{\lV\pmb{\omega}\rV_1}}{m^{1/2}},\frac{\lV\pmb{\omega}\rV_1}{\lV\x_{0}\rV_{2}\cdot m}\right\}.
\end{eqnarray}
Evidently, error bound (\ref{bound2}) is tighter than LS model in \cite{huang2024performance}.

		\end{remark}
			\begin{remark}
	\cite{huang2022outlier} considered the Robust-PhaseLift model and also obtained the threshold $s^{*,2}\approx 0.1185$ when the signs of adversarial sparse outlier $\z$ are drawn from the Rademacher distribution and the support of $\z$ is fixed. Our theoretical results of model (\ref{model3}) are not dependent on these assumptions.
		\end{remark}
	
	\begin{theorem} \label{theorem4}
		Set $s\textgreater s^{*,2}+2\xi$ where $s^{*,2}\approx 0.1185$. If $\pmb{\omega}=\pmb{0}$ and $\mathcal{K}=\mathbb{R}^{n}$, for any $\x_0\in \mathbb{R}^{n}$, there exists  an adversarial
		sparse outlier $\pmb{z}$ with $\Vert \pmb{z}\Vert_{0}=sm$, such that the solution of (\ref{model3}) is not exactly the ground-truth $\x_0$ with probability exceeding $1-\mathcal{O}\left(e^{-cm\xi^{2}}\right)$.	
	\end{theorem}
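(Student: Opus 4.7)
The plan is to mirror the counterexample construction behind Theorem \ref{theorem3}, now specialised to intensity measurements. Given any $\x_{0}\in\mathbb{R}^{n}$, the strategy is to exhibit an alternative candidate $\x_{\star}$ with $\mathrm{dist}_{2}(\x_{\star},\x_{0})>0$ together with an $sm$-sparse outlier $\z$ that strictly lowers the intensity-LAD objective at $\x_{\star}$ relative to $\x_{0}$, thereby ruling out $\x_{0}$ (and $-\x_{0}$) as a minimizer of (\ref{model3}).

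Set $d_i:=\lv\langle\pmb{a}_i,\x_{0}\rangle\rv^{2}-\lv\langle\pmb{a}_i,\x_{\star}\rangle\rv^{2}$, let $S$ be the index set of the $sm$ largest values of $\lv d_i\rv$, and put $z_i=-d_i$ for $i\in S$ and $z_i=0$ otherwise; this is a legitimate $(\A,\pmb{b})$-adaptive outlier of cardinality $sm$. Because $\pmb{\omega}=\pmb{0}$ gives $\pmb{b}=\lv\A\x_{0}\rv^{2}+\z$, the objective gap collapses into a signed partial-sum comparison:
\begin{equation*}
\lV\lv\A\x_{\star}\rv^{2}-\pmb{b}\rV_{1}-\lV\lv\A\x_{0}\rv^{2}-\pmb{b}\rV_{1}=\sum_{i\in S^{c}}\lv d_i\rv-\sum_{i\in S}\lv d_i\rv.
\end{equation*}
The whole argument therefore reduces to showing that the top $sm$ entries of $\lv d_i\rv$ strictly dominate the remaining $(1-s)m$ entries.

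For the distributional step, I would exploit the factorisation $d_i=\langle\pmb{a}_i,\x_{0}-\x_{\star}\rangle\cdot\langle\pmb{a}_i,\x_{0}+\x_{\star}\rangle$, so that the $\lv d_i\rv$ are i.i.d.\ copies of $\lv XY\rv$ with $(X,Y)$ jointly Gaussian and covariance prescribed by $\x_{0}-\x_{\star}$ and $\x_{0}+\x_{\star}$; its exact density is the one computed in Appendix \ref{RV}. Invoking the Dvoretzky--Kiefer--Wolfowitz-type inequality of Appendix \ref{conin} --- the same tool underlying the nonlinear ROB analysis of Theorem \ref{theorem2} --- I can uniformly replace, up to an error of order $\xi$, the two empirical partial sums by $s\cdot\mathbb{E}[\lv XY\rv\mid \lv XY\rv\ge t_s]$ and $(1-s)\cdot\mathbb{E}[\lv XY\rv\mid \lv XY\rv<t_s]$, where $t_s$ is the $(1-s)$-quantile of $\lv XY\rv$, with failure probability at most $\mathcal{O}(e^{-cm\xi^{2}})$. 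The problem is thereby reduced to the deterministic inequality asserting that the upper $s$-tail of $\lv XY\rv$ carries strictly more than half of $\mathbb{E}\lv XY\rv$.

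The final step is to choose $\x_{\star}$ so that this deterministic threshold triggers at the smallest possible $s$. By the same PDF computation that produced the sufficiency boundary in Theorem \ref{theorem2}, the extremal configuration of $(\x_{0}-\x_{\star},\x_{0}+\x_{\star})$ gives exactly the critical value $s^{*,2}\approx 0.1185$, and for $s>s^{*,2}+2\xi$ the deterministic inequality then holds with margin linear in $\xi$, comfortably absorbing the $\mathcal{O}(\xi)$ concentration slack. I expect the main obstacle to lie precisely here: identifying the extremal direction $\x_{\star}$ that sharpens the tail computation of $\lv XY\rv$ all the way to $s^{*,2}$, and verifying that the margin in the deterministic inequality depends \emph{linearly} (rather than merely continuously) on the gap $s-s^{*,2}$, so that the $2\xi$ buffer in the hypothesis indeed dominates the probabilistic error.
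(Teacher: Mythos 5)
Your proposal is correct and follows essentially the same route as the paper: the same adaptive outlier $\z=\lv\A_{\mS_{0}}\x_{\star}\rv^{2}-\lv\A_{\mS_{0}}\x_{0}\rv^{2}$ supported on the top $sm$ discrepancies, the same reduction of the objective gap to the signed partial-sum comparison, concentration via the Dvoretzky--Kiefer--Wolfowitz-type inequality applied to the density of $\lv XY\rv$, and the same extremal choice of $\x_{\star}$ realizing $\tilde{\rho}_{2}=\argmin J(\rho,s^{*,2})$. The "linear margin" issue you flag is resolved in the paper simply by defining $\tilde{l}:=-J(\tilde{\rho}_{2},s^{*,2}+\xi)/\xi>0$ (strict monotonicity of $J$ in $s$ makes this positive), so no additional quantitative estimate is needed.
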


\section{A Framework for Adversarial Phase Retrieval}\label{frame}	
	\subsection{ Nonlinear Robust Outlier Bound Condition}\label{robc}
For simplification, the corrupted measurements are written as
	\begin{equation*}
		\pmb{b}=\lv\A\x_{0}\rv^{k}+\pmb{\omega}+\pmb{z},\quad \|\pmb{z}\|_{0}\leq sm,\quad k=1,2.
	\end{equation*}
		In this section
		 we have considered a unified form for amplitude measurement and intensity measurement without distinction.
	Due to the adversarial sparse outliers $\pmb{z}$'s dependence on the measurement system $(\pmb{b}, \pmb{A})$ and ground-truth $\x_0$, 
	as well as the the nonlinearity of operators $\lv\mA\x\rv$ and $\lv\mA\x\rv^{2}$, 
	we give the definition of nonlinear ROB condition.
		
	\begin{definition}[Nonlinear Robust Outlier Bound Condition]
		For amplitude measurement ($k=1$) or intensity measurement ($k=2$), and adversarial corruption friction $s$, sampling matrix $\mA\in \mathbb{R}^{m\times n}$ is said to satisfy the nonlinear robust outlier bound condition, 
		if there exists a constant $\mathcal{C}\left(s\right)> 0$ depending on $s\in \left[0,1\right]$,
		such that the following holds for all vectors $\pmb{x},\pmb{y}\in \mathcal{K}$,
		\begin{eqnarray*}
			\frac{1}{m}\min_{\left\{\mS \subset\left[m\right],  |\mS|\leq s m\right\} }
			\left[\lV \lv\mA_{{\mS}^{c}} \x\rv^{k} -\lv\mA_{{\mS}^{c}} \y\rv^{k} \rV_1-\lV \lv\mA_{\mS} \x\rv^{k} -\lv\mA_{\mS} \y\rv^{k} \rV_1 \right]
			\ge \mathcal{C}\left(s\right)\cdot\mathrm{dist}_{k}\left(\x,\y\right).
		\end{eqnarray*}
	\end{definition}
	Nonlinear ROB condition characterizes the worst-case scenario where $\lv\mA\x\rv^{k}-\lv\mA\y\rv^{k}$ are corrupted in the sense of the $\ell_{1}$-norm under the given adversarial corruption fraction $s$.
	
	\subsection{Adversarial Sparse Outlier Separation Condition}\label{ASOSC}
	If the sampling matrix $\mA$ satisfies the nonlinear ROB condition, we then can give an upper bound to the distance between the model solution $\x_{\star}$ and the ground-truth 
	$\x_{0}$ by the subsequent adversarial sparse outlier separation condition.
	\begin{lemma}[Adversarial Sparse Outlier Separation Condition]\label{ASOSC1}
		For adversarial corruption fraction $s$, assume the sampling matrix $\mA$ satisfies the nonlinear ROB condition with coefficient $\mathcal{C}\left(s\right) >0$, 
		then the solution $\x_{\star}$ of nonlinear LAD models (\ref{model1}) or (\ref{model3}) satisfies
		\begin{equation}
		\mathrm{dist}_{k}\left(\x_{\star},\x_{0}\right)\le \widetilde{\mathcal{C}}\left(s \right)\frac{\lV\pmb{\omega}\rV_1}{m},
		\end{equation}	
		where $\widetilde{\mathcal{C}}\left(s\right)=2/\mathcal{C}\left(s\right)$.
	\end{lemma}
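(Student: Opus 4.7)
The plan is to combine the optimality of $\x_\star$ in the LAD problem with the nonlinear ROB condition applied to the support of the adversarial sparse outlier $\pmb{z}$. Let $\mathcal{S}_0 := \mathrm{supp}(\pmb z)$, so $|\mathcal{S}_0| \le sm$, and set $\pmb v := |\mA\x_\star|^k - |\mA\x_0|^k$ for brevity. Since $\x_0 \in \mathcal{K}$, the optimality of $\x_\star$ gives
\begin{equation*}
\lV \pmb v - \pmb\omega - \pmb z \rV_1 \;=\; \lV\, |\mA\x_\star|^k - \pmb b\,\rV_1 \;\le\; \lV\, |\mA\x_0|^k - \pmb b\,\rV_1 \;=\; \lV \pmb\omega + \pmb z\rV_1.
\end{equation*}

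Next I would split the $\ell_1$ norms over $\mathcal{S}_0$ and $\mathcal{S}_0^c$ (noting $\pmb z$ vanishes off $\mathcal{S}_0$) and apply the reverse triangle inequality on each piece: on $\mathcal{S}_0^c$ one has $\lV \pmb v_{\mathcal{S}_0^c} - \pmb\omega_{\mathcal{S}_0^c}\rV_1 \ge \lV \pmb v_{\mathcal{S}_0^c}\rV_1 - \lV\pmb\omega_{\mathcal{S}_0^c}\rV_1$, while on $\mathcal{S}_0$ one has $\lV\pmb v_{\mathcal{S}_0} - \pmb\omega_{\mathcal{S}_0} - \pmb z_{\mathcal{S}_0}\rV_1 \ge \lV \pmb\omega_{\mathcal{S}_0} + \pmb z_{\mathcal{S}_0}\rV_1 - \lV \pmb v_{\mathcal{S}_0}\rV_1$. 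Substituting back into the optimality inequality, the $\lV\pmb\omega_{\mathcal{S}_0}+\pmb z_{\mathcal{S}_0}\rV_1$ terms cancel and one is left with the clean error-inequality
\begin{equation*}
\lV \pmb v_{\mathcal{S}_0^c}\rV_1 - \lV \pmb v_{\mathcal{S}_0}\rV_1 \;\le\; 2\lV \pmb\omega_{\mathcal{S}_0^c}\rV_1 \;\le\; 2\lV \pmb\omega\rV_1.
\end{equation*}

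Finally, I would apply the nonlinear ROB condition with $\x = \x_\star$, $\y = \x_0$, and the particular index set $\mathcal{S} = \mathcal{S}_0$ (which is a valid choice since $|\mathcal{S}_0|\le sm$). Recognizing that $\pmb v_{\mathcal{S}_0^c} = |\mA_{\mathcal{S}_0^c}\x_\star|^k - |\mA_{\mathcal{S}_0^c}\x_0|^k$ and similarly on $\mathcal{S}_0$, the ROB condition yields the matching lower bound
\begin{equation*}
\lV \pmb v_{\mathcal{S}_0^c}\rV_1 - \lV \pmb v_{\mathcal{S}_0}\rV_1 \;\ge\; m\,\mathcal{C}(s)\,\mathrm{dist}_k(\x_\star,\x_0).
\end{equation*}
Chaining these two inequalities immediately gives $\mathrm{dist}_k(\x_\star,\x_0) \le \tfrac{2}{\mathcal{C}(s)} \cdot \tfrac{\lV\pmb\omega\rV_1}{m} = \widetilde{\mathcal{C}}(s) \tfrac{\lV\pmb\omega\rV_1}{m}$, as required.

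There is no serious obstacle here: the result is a deterministic consequence of optimality plus the ROB inequality. The only real care needed is the bookkeeping in the triangle-inequality split — specifically, choosing to expand the $\mathcal{S}_0^c$ piece in the forward direction (to pull out $-\lV\pmb\omega_{\mathcal{S}_0^c}\rV_1$) and the $\mathcal{S}_0$ piece in the reverse direction (so the large $\lV\pmb\omega_{\mathcal{S}_0}+\pmb z_{\mathcal{S}_0}\rV_1$ cancels rather than accumulates). All the heavy lifting — controlling the worst-case corrupted $(1-s)$-fraction and the dependence on $\pmb z$ — has already been encapsulated in the nonlinear ROB condition itself, which is why this lemma is a short deterministic argument once that condition is in hand.
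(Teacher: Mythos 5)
Your proposal is correct and follows essentially the same route as the paper's proof: optimality of $\x_{\star}$, splitting the $\ell_1$-norm over the support of $\pmb{z}$ and its complement, the reverse triangle inequality arranged so that $\lV\pmb{z}+\pmb{\omega}_{\mS}\rV_1$ cancels, and then the nonlinear ROB condition applied to that particular index set. The only cosmetic difference is that you name the set $\mathcal{S}_0=\mathrm{supp}(\pmb{z})$ explicitly, which the paper leaves implicit.
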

	\begin{proof}
		By definition of $\x_{\star}$ and $ \pmb{b}=\lv\mA\x_{0}\rv^{k}+\pmb{\omega}+\pmb{z}$, we have
		\begin{eqnarray*}
			\lV \lv\mA \x_{\star}\rv^{k} -\lv\mA \x_{0}\rv^{k} - \pmb{z} -\pmb{\omega } \rV_1 \leq \lV \pmb{z}+\pmb{\omega }  \rV_1.
		\end{eqnarray*}
		Since $\ell_{1}$-norm can be divided into  parts $\mathcal{S}$ and $\mathcal{S}^{c}$, we can get 
		\begin{eqnarray*}
			\lV \lv\mA_{\mS} \x_{\star}\rv^{k} -\lv\mA_{\mS} \x_{0}\rv^{k} - \pmb{z} -\pmb{\omega}_{\mS} \rV_1 
			+
			\lV \lv\mA_{{\mS}^{c}} \x_{\star}\rv^{k} -\lv\mA_{{\mS}^{c}} \x_{0}\rv^{k} -\pmb{\omega}_{{\mS}^{c}} \rV_1\
			\leq \lV \pmb{z}+\pmb{\omega}_{\mS}  \rV_1+\lV\pmb{\omega}_{{\mS}^{c}}  \rV_1.
		\end{eqnarray*}
		The triangle inequality yields that
		\begin{eqnarray*}
			\lV \lv\mA_{\mS} \x_{\star}\rv^{k} -\lv\mA_{\mS} \x_{0}\rv^{k} - \pmb{z} -\pmb{\omega}_{\mS} \rV_1
			&\geq& \lV \pmb{z}+\pmb{\omega}_{\mS} \rV_1-\lV \lv\mA_{\mS} \x_{\star}\rv^{k} -\lv\mA_{\mS} \x_{0}\rv^{k} \rV_1,\\
			\lV \lv\mA_{{\mS}^{c}} \x_{\star}\rv^{k} -\lv\mA_{{\mS}^{c}} \x_{0}\rv^{k} -\pmb{\omega}_{{\mS}^{c}} \rV_1 
			&\geq& 	\lV \lv\mA_{{\mS}^{c}} \x_{\star}\rv^{k} -\lv\mA_{{\mS}^{c}} \x_{0}\rv ^{k}\rV_1 - \lV\pmb{\omega}_{{\mS}^{c}}\rV_1.
		\end{eqnarray*}
		Thus, we can further bound
		\begin{equation*}
			\lV \lv\mA_{{\mS}^{c}} \x_{\star}\rv^{k} -\lv\mA_{{\mS}^{c}} \x_{0}\rv^{k} \rV_1-\lV \lv\mA_{\mS} \x_{\star}\rv^{k} -\lv\mA_{\mS} \x_{0}\rv^{k} \rV_1 
			\leq  2\lV\pmb{\omega}_{{\mS}^{c}} \rV_1\leq 2\lV\pmb{\omega}\rV_1.
		\end{equation*}
Since the set $\mS$ should encompass all the subsets of $\left[m\right]$ that satisfy $|\mS|\leq s m$, based on the nonlinear ROB condition, we have 
		\begin{eqnarray*}
			\mathrm{dist}_{k}\left(\x_{\star},\x_{0}\right)\leq \frac{2}{\mathcal{C}\left(s\right)}\frac{\lV\pmb{\omega}\rV_1}{m}=\widetilde{\mathcal{C}}\left(s\right)\frac{\lV\pmb{\omega}\rV_1}{m}.
		\end{eqnarray*}
	\end{proof}

\section{Analysis of Nonlinear ROB Condition}\label{nonROBC}
In this section, we will respectively prove that the sampling matrix $\mA$ satisfies nonlinear ROB condition for the two cases. We further give the sharp sparsity thresholds $s^*$ of adversarial outliers.
	\subsection{Nonlinear ROB Condition for Amplitude Measurement}
	When $\pmb{a}\sim\mathcal{N}\left(0,\pmb{I}_{n}\right)$, we can see that $X:=\langle\pmb{a},\x\rangle\sim \mathcal{N}\left(0,\lV\x\rV^{2}_{2}\right), Y:=\langle\pmb{a},\y\rangle\sim \mathcal{N}\left(0,\lV\y\rV^{2}_{2}\right)$ for fixed $\x, \y \in \mathbb{R}^{n}$.
	Without losing generality, we assume 
	$\lV\x\rV_{2}=1$ and $ \lV\y\rV_{2}=\alpha\in\left[0,1\right]$, and denote the correlation coefficient between $X$ and $Y$ as $\rho$.
	Then we have
		\begin{eqnarray*}
		\lV\lv\mA\x\rv-\lv\mA\y\rv\rV_{1}=\sum_{i=1}^{m}\big|\lv\langle\pmb{a}_{i},\x\rangle\rv-\lv\langle\pmb{a}_{i},\y\rangle\rv\big|=
		\sum_{i=1}^{m}\big| \lv X_{i} \rv - \lv Y_{i} \rv \big|.
	\end{eqnarray*}
	We provide the following lemma to give the PDF of $\lv Z_{\rho,\alpha}\rv:=\big| \lv X \rv - \lv Y \rv \big|$, which serves as the primary statistical tool for our analysis. Since the subsequent analysis solely focuses on the PDF within the context of integration, we only consider the case $z\textgreater 0$.
	\begin{lemma}\label{x-y}
We set two variables $X\sim \mathcal{N}\left(0,1\right),Y\sim \mathcal{N}\left(0,\alpha^{2}\right)$, $ 0\le \alpha \le 1$ and their correlation coefficient $\rho\in \left[-1,1 \right]$.
		Then for $z\textgreater 0$, the PDF of $\lv Z_{\rho,\alpha}\rv  = \bigl| \lv X\rv-\lv Y\rv \bigr|$ is:\\
		If $\lv\rho\rv\textless1,\alpha\neq0$, set $\varLambda_{\pm}=\alpha^{2}\pm2\rho\alpha+1$, then
		{\small
		\bea
		&&g_{\rho,\alpha}\left(z\right)= \\
		&&\ \  \frac{\exp({-\frac{z^{2}}{2\varLambda_{+}}})}{\sqrt{2\pi\varLambda_{+}}} 
			\left[2-\text{erf}\left({\frac{1+\rho\alpha}{\alpha\sqrt{2\varLambda_{+}\left(1-\rho^{2}\right)}} z	}\right)
		-\text{erf}\left(-{\frac{1+\rho\alpha}{\alpha\sqrt{2\varLambda_{+}\left(1-\rho^{2}\right)}} z}+\sqrt{\frac{\varLambda_{+}}{2\left(1-\rho^{2}\right)}}\frac{z}{\alpha}\right)\right]\\
		&&\ \ +\frac{\exp({-\frac{z^{2}}{2\varLambda_{-}}})}{\sqrt{2\pi\varLambda_{-}} } 
			\left[2-\text{erf}\left({\frac{1-\rho\alpha}{\alpha\sqrt{2\varLambda_{-}\left(1-\rho^{2}\right)}}z}\right)
		-\text{erf}\left({-\frac{1-\rho\alpha}{\alpha\sqrt{2\varLambda_{-}\left(1-\rho^{2}\right)}}z}+\sqrt{\frac{\varLambda_{-}}{2\left(1-\rho^{2}\right)}}\frac{z}{\alpha}\right)	\right]. 
	\eea}
	\\
		If $\rho=\pm1$, then $ g_{\rho,\alpha}\left(z\right)=
		\frac{1}{1-\alpha}\sqrt{\frac{2}{\pi}}\exp\big[{-\frac{z^{2}}{2\left(1-\alpha\right)^{2}}}\big]$.\\
If $\alpha=0$, then $g_{\rho,\alpha}\left(z\right)=\sqrt{\frac{2}{\pi}}\exp\big({-z^{2}/2}\big).$	

	\end{lemma}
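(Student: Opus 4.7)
My plan is to derive $g_{\rho,\alpha}$ by folding the bivariate Gaussian density of $(X,Y)$ into the first quadrant and then integrating along the two level sets $\lvert X\rvert-\lvert Y\rvert=\pm z$. The joint density is
\[
f_{X,Y}(x,y)=\frac{1}{2\pi\alpha\sqrt{1-\rho^{2}}}\exp\!\Bigl(-\tfrac{1}{2(1-\rho^{2})}\bigl[x^{2}-\tfrac{2\rho xy}{\alpha}+\tfrac{y^{2}}{\alpha^{2}}\bigr]\Bigr),
\]
and because $f_{X,Y}(-x,-y)=f_{X,Y}(x,y)$, folding onto $(\lvert X\rvert,\lvert Y\rvert)$ yields $f_{\lvert X\rvert,\lvert Y\rvert}(u,v)=2f_{X,Y}(u,v)+2f_{X,Y}(u,-v)$ on $u,v\ge 0$. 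The flip $y\mapsto -y$ in the second term is equivalent to sending $\rho\mapsto-\rho$ in the exponent, which already foreshadows the two branches of the final formula indexed by $\varLambda_{\pm}=\alpha^{2}\pm 2\rho\alpha+1$.

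For $z>0$ the density of $W=\bigl\lvert\lvert X\rvert-\lvert Y\rvert\bigr\rvert$ can be written via the two level sets of $\lvert X\rvert-\lvert Y\rvert$ as
\[
g_{\rho,\alpha}(z)=\int_{0}^{\infty}\!\bigl[f_{\lvert X\rvert,\lvert Y\rvert}(z+v,v)+f_{\lvert X\rvert,\lvert Y\rvert}(v,z+v)\bigr]\,dv,
\]
which expands into four Gaussian integrals on the half-line. In each one I complete the square in $v$; a direct calculation shows that the coefficient of $v^{2}$ in the exponent equals $-\varLambda_{-}/(2\alpha^{2}(1-\rho^{2}))$ for the two integrals arising from the $+\rho$ copy and $-\varLambda_{+}/(2\alpha^{2}(1-\rho^{2}))$ for the two coming from the $-\rho$ copy. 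Using the identities $(\alpha\pm\rho)^{2}-\varLambda_{\pm}=-(1-\rho^{2})$ and $(1\pm\rho\alpha)^{2}-\varLambda_{\pm}=-\alpha^{2}(1-\rho^{2})$, the piece of the completed square independent of $v$ collapses to $-z^{2}/(2\varLambda_{\pm})$, producing a common prefactor $e^{-z^{2}/(2\varLambda_{\pm})}/\sqrt{2\pi\varLambda_{\pm}}$ in each of the two branches.

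The residual half-line integral $\int_{0}^{\infty}e^{-a(v-v_{0})^{2}}dv=\tfrac{\sqrt{\pi}}{2\sqrt{a}}\bigl[1+\mathrm{erf}(v_{0}\sqrt{a})\bigr]$ then supplies the error-function terms. Pairing the two integrals sharing the same $\varLambda$ coefficient assembles the bracket $[2-\mathrm{erf}(\cdot)-\mathrm{erf}(\cdot)]$: the ``$2$'' is the sum of the two unit contributions, and the two arguments are the scaled shifts $v_{0}\sqrt{a}$ of the two completed squares. From $\int_{0}^{\infty}f_{X,Y}(v,z+v)dv$ one reads off the shift $\tfrac{1\pm\rho\alpha}{\alpha\sqrt{2\varLambda_{\pm}(1-\rho^{2})}}z$, while from $\int_{0}^{\infty}f_{X,Y}(z+v,v)dv$ one gets $\tfrac{\alpha\pm\rho}{\sqrt{2\varLambda_{\pm}(1-\rho^{2})}}z$; the algebraic identity $\varLambda_{\pm}-(1\pm\rho\alpha)=\alpha(\alpha\pm\rho)$ rewrites the latter as $-\tfrac{1\pm\rho\alpha}{\alpha\sqrt{2\varLambda_{\pm}(1-\rho^{2})}}z+\sqrt{\varLambda_{\pm}/(2(1-\rho^{2}))}\,z/\alpha$, which is precisely the second erf argument appearing in the statement.

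The two degenerate cases follow by direct specialization. If $\alpha=0$ then $Y\equiv 0$ almost surely and $W=\lvert X\rvert$ is half-normal with density $\sqrt{2/\pi}\,e^{-z^{2}/2}$; if $\rho=\pm 1$ then $Y=\pm\alpha X$ so $\lvert Y\rvert=\alpha\lvert X\rvert$ and $W=(1-\alpha)\lvert X\rvert$ has the scaled half-normal density $\tfrac{1}{1-\alpha}\sqrt{2/\pi}\exp\!\bigl(-z^{2}/(2(1-\alpha)^{2})\bigr)$. The main obstacle is algebraic rather than conceptual: one must track four shifted Gaussian integrals simultaneously and verify that the two natural expressions for each erf argument produced by the two level-set parameterizations reconcile into the single symmetric packaging displayed in the lemma. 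The identity $\varLambda_{\pm}-(1\pm\rho\alpha)=\alpha(\alpha\pm\rho)$ is the crucial simplification that makes this reconciliation transparent.
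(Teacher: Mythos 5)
Your proposal is correct and follows essentially the same route as the paper: both fold the bivariate Gaussian into the first quadrant (your symmetrized $2f_{X,Y}(u,v)+2f_{X,Y}(u,-v)$ is exactly the folded-normal density the paper quotes), parametrize the level sets $\lvert X\rvert-\lvert Y\rvert=\pm z$, complete the square in the integration variable, and evaluate the resulting half-line Gaussian integrals as error functions; the paper merely computes the signed density $f_{\rho,\alpha}$ first and then symmetrizes via $g_{\rho,\alpha}(z)=f_{\rho,\alpha}(z)+f_{\rho,\alpha}(-z)$, which coincides with your direct two-level-set decomposition after a trivial change of variables. Your identity $\varLambda_{\pm}-(1\pm\rho\alpha)=\alpha(\alpha\pm\rho)$ correctly reconciles the two erf arguments with the stated form, and the degenerate cases are handled as in the paper.
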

	\begin{proof}
		We first consider when $\lv\rho\rv\textless1$ and $\alpha\neq0$.
		By (3.1) in \cite{psarakis2001some}, the PDF of the bivariate folded normal distribution $(\lv X \rv , \lv Y\rv)$ is given by:
		\begin{eqnarray*}
			f_{\lv X\rv,\lv Y\rv}\left(x,y\right)
			=\frac{\exp \left[-\frac{1}{2\left(1-\rho^{2}\right)}  \left(x^{2}-\frac{2\rho xy}{\alpha}+\frac{y^{2}}{\alpha^{2}} \right)\right]+\exp \left[-\frac{1}{2\left(1-\rho^{2}\right)}  \left(x^{2}+\frac{2\rho xy}{\alpha}+\frac{y^{2}}{\alpha^{2}} \right)\right]}{\pi\alpha\sqrt{1-\rho^{2}}}
			\quad x,y\ge0.
		\end{eqnarray*}
		By integral transformation, the PDF of $Z_{\rho,\alpha}=\lv X\rv-\lv Y\rv$ obeys 
		\begin{eqnarray}\label{pdf(x-y)}
			f_{\rho,\alpha}\left(z\right)=\mathbbm{1}_{z\in\left(-\infty,0\right)}\cdot\int_{0}^{+\infty}f_{\lv X\rv,\lv Y\rv}\left(x,x-z\right)dx+ \mathbbm{1}_{z\in\left[0,+\infty\right)}\cdot\int_{z}^{+\infty}f_{\lv X\rv,\lv Y\rv}\left(x,x-z\right)dx.
		\end{eqnarray}
		It can be derived from \cite{gradshteyn2014table} that
		\begin{eqnarray}\label{exp1}
			\int_{0}^{+\infty}\exp\left(-\frac{x^{2}}{4\beta}-\gamma x\right)dx=\sqrt{\pi\beta}\exp(\beta \gamma^{2}) \left[1-\text{erf}(\gamma\sqrt{\beta})\right] \quad \beta\textgreater 0,
		\end{eqnarray}
		and similar calculation yields that
		\begin{eqnarray}\label{exp2}
			\int_{z}^{+\infty}\exp\left(-\frac{x^{2}}{4\beta}-\gamma x\right)dx=\sqrt{\pi\beta}\exp\left(\beta \gamma^{2}\right) \left[1-\text{erf}\left(\gamma\sqrt{\beta}+\frac{z}{2\sqrt{\beta}}\right)\right] \quad \beta\textgreater 0.
		\end{eqnarray}
		We will use (\ref{exp1}) and (\ref{exp2}) to calculate the PDF of $Z_{\rho,\alpha}$.		
		If $z\textless0$, thus 
		\bea
		f_{\rho,\alpha}\left(z\right)=\int_{0}^{+\infty}f_{\lv X\rv,\lv Y\rv}\left(x,x-z\right)dx.
		\eea 
		For convenience, we define
		\begin{eqnarray*}
			-\frac{1}{2\left(1-\rho^{2}\right)}\left[x^{2}\pm\frac{2\rho x\left(x-z\right)}{\alpha}+\frac{\left(x-z\right)^{2}}{\alpha^{2}}\right] 
			:=-\frac{x^{2}}{4\beta_{1(2)}}-\gamma_{1(2)}x-\delta.
		\end{eqnarray*}
		Then we can get
		\begin{eqnarray*}	
		\beta _{1(2)} = \frac{(1-\rho^{2})\alpha^{2}}{2\varLambda_{\pm}} > 0,\quad \gamma_{1(2)}=-\frac{1}{1-\rho^{2}}(\frac{z}{\alpha^{2}}\pm\frac{\rho z}{\alpha})\quad \text{and}\quad \delta=\frac{z^{2}}{2(1-\rho^{2})\alpha^{2}}.
		\end{eqnarray*}
		Moreover, it can be obtained through simple calculation that
		\begin{eqnarray*}
			\beta_{1}\gamma^{2}_{1}
			=\frac{\left(1+\rho\alpha\right)^{2}z^{2}}{2\left(1-\rho^{2}\right)\varLambda_{+}\alpha^{2}}\quad \text{and} \quad
			\beta_{2}\gamma^{2}_{2}=\frac{\left(1-\rho\alpha\right)^{2}z^{2}}{2\left(1-\rho^{2}\right)\varLambda_{-}\alpha^{2}}.
		\end{eqnarray*}
		We now can evaluate the integral:
		\begin{eqnarray*}
			&&f_{\rho,\alpha}\left(z\right)\\
			&=& \frac{\exp\left(-\delta \right)}{\pi\alpha\sqrt{1-\rho^{2}}}\int_{0}^{+\infty}\exp\left(-\frac{x^{2}}{4\beta_{1}}-\gamma_{1}x\right)+\exp\left(-\frac{x^{2}}{4\beta_{2}}-\gamma_{2}x\right)dx\\
			&=& \frac{\sqrt{\beta_{1}}\exp\left(\beta_{1} \gamma_{1}^{2}-\delta\right)}{\alpha\sqrt{\pi\left(1-\rho^{2}\right)}} \left[1-\text{erf}\left(\gamma_{1}\sqrt{\beta_{1}}\right)\right]+ \frac{\sqrt{\beta_{2}}\exp\left(\beta_{2} \gamma_{2} 
			^{2}-\delta\right)}{\alpha\sqrt{\pi\left(1-\rho^{2}\right)}} \left[1-\text{erf}\left(\gamma_{2}\sqrt{\beta_{2}}\right)\right]\\
			&=&\frac{\exp\left(-\frac{z^{2}}{2\varLambda_{+}}\right)}{\sqrt{2\pi \varLambda_{+}}} 
			\left[1-\text{erf}\left(-{\frac{1+\rho\alpha}{\alpha\sqrt{2\left(1-\rho^{2}\right)\varLambda_{+}} }z
			}\right)\right] \\
			&&+\frac{\exp\left(-\frac{z^{2}}{2\varLambda_{-}}\right)}{\sqrt{2\pi\varLambda_{-}}} 
			\left[1-\text{erf}\left(-{\frac{1-\rho\alpha}{\alpha\sqrt{2\left(1-\rho^{2}\right)\varLambda_{-}} }z}\right)\right].
		\end{eqnarray*}	
		If $z\ge0$, then $f_{\rho,\alpha}\left(z\right)=\int_{z}^{+\infty}f_{\lv X\rv,\lv Y\rv}\left(x,x-z\right)dx$. Similarly, we have
		\begin{eqnarray*}
			f_{\rho,\alpha}\left(z\right)
			&=&\frac{\exp\left(-\frac{z^{2}}{2\varLambda_{+}}\right)}{\sqrt{2\pi\varLambda_{+}}} 
			\left[1-\text{erf}\left(-{\frac{1+\rho\alpha}{\alpha\sqrt{2\left(1-\rho^{2}\right)\varLambda_{+} } }z}+\sqrt{\frac{\varLambda_{+}}{2\left(1-\rho^{2}\right)}}\frac{z}{\alpha}\right)\right]\\
			&&+\frac{\exp\left(-\frac{z^{2}}{2\varLambda_{-}}\right)}{\sqrt{2\pi\varLambda_{-}}} 
			\left[1-\text{erf}\left(-{\frac{1-\rho\alpha}{\alpha\sqrt{2\left(1-\rho^{2}\right)\varLambda_{-}} }z}+\sqrt{\frac{\varLambda_{-}}{2\left(1-\rho^{2}\right)}}\frac{z}{\alpha}\right)\right].
		\end{eqnarray*}		
		Due to $g_{\rho,\alpha}\left(z\right) = f_{\rho,\alpha}\left(z\right) +f_{\rho,\alpha}\left(-z\right)$, we have obtained the expression.
		
		When $\rho=\pm1$, then $ Z_{\rho,\alpha}=\lv X\rv-\lv Y\rv=\left(1-\alpha\right)\lv X\rv$. Thus
	$f_{\rho,\alpha}\left(z\right)=
		\frac{1}{1-\alpha}\sqrt{\frac{1}{2\pi}}\exp\left[-\frac{z^{2}}{\left(1-\alpha\right)^{2}}\right].$
		Then $g_{\rho,\alpha}\left(z\right)=\frac{1}{1-\alpha}\sqrt{\frac{2}{\pi}}\exp\left[-\frac{z^{2}}{\left(1-\alpha\right)^{2}}\right]$.
		
		When $\alpha=0$, then $Z_{\rho,\alpha}=\lv X\rv-\lv Y\rv=\lv X\rv$. Thus
		$f_{\rho,\alpha}\left(z\right)=\sqrt{\frac{1}{2\pi}}\exp\left(-z^{2}/2\right)$
		and then $g_{\rho,\alpha}\left(z\right)=\sqrt{\frac{2}{\pi}}\exp\left(-z^{2}/2\right)$.
	\end{proof}

	Furthermore, we set the Cumulative Distribution Function (CDF) of $\lv Z_{\rho,\alpha}\rv$ as
	\begin{eqnarray*}
		G_{\rho,\alpha}\left(t\right) = \int_{0}^{t} g_{\rho,\alpha}\left(z\right) dz.
	\end{eqnarray*}
	Due to symmetry, we can only consider $\rho\ge 0$. Thus, $\mathrm{dist}_{1}\left(\x,\y\right)=\sqrt{1+\alpha^{2}-2\alpha\rho}.$ To characterize the deviation between $\ell_1$-norm of largest $s$-fraction and smallest $(1-s)$-fraction of the random variable $\lv Z_{\rho,\alpha}\rv$, 
	we define the balance function
	\begin{equation*}
		M\left(\rho,\alpha, s\right) = M_{1}\left(\rho,\alpha, s\right)-M_{2}\left(\rho,\alpha, s\right)\\
		:=\frac{\left[ \int_0^{G_{\rho,\alpha}^{-1}\left(1-s\right)} - \int_{G_{\rho,\alpha}^{-1}\left(1-s\right)}^{+\infty} \right] z g_{\rho,\alpha}\left(z\right)dz}{\sqrt{1+\alpha^{2}-2\alpha\rho}}.
	\end{equation*}
For fixed $\x,\y$ with corresponding parameters $\rho,\alpha$ and largest $\ell_{1}$-norm corruption set $\mathcal{S}$ with size $\lv\mathcal{S}\rv=sm$, in the limit of large samples (i.e. $m\to\infty$), we see that a form of strong law of large numbers yields that 
	\begin{eqnarray}
M\left(\rho,\alpha, s\right)=\frac{\mathbb{E}\left[\lV \lv\mA_{{\mS}^{c}} \x\rv -\lv\mA_{{\mS}^{c}} \y\rv \rV_1-\lV \lv\mA_{\mS} \x\rv -\lv\mA_{\mS} \y\rv \rV_1 \right]}{m\cdot\mathrm{dist}_{1}\left(\x,\y\right)}.
\end{eqnarray}
	Nonlinear ROB condition is uniformly established for any $\x,\y\in\mathcal{K}\subset\mathbb{R}^{n}$, 
	we should  take into account the minimum value of function $M\left(\rho,\alpha, s\right)$ regarding parameters $\rho,\alpha\in\left[0,1\right]$.
	It motivates us to analyze the minimum balance function $\mathcal{M}\left(s\right)$.
	\begin{definition}[Minimum Balance Function]	
		\begin{eqnarray}
			\mathcal{M}\left(s\right)=\min_{\rho,\alpha \in[0,1]}\frac{\left[\int_{0 }^{G_{\rho,\alpha}^{-1}\left(1-s\right)}- \int_{G_{\rho,\alpha}^{-1}\left(1-s\right) }^{+\infty} \right]z g_{\rho,\alpha }\left(z\right)dz}{\sqrt{1+\alpha^{2}-2\alpha\rho}}.
		\end{eqnarray}
	\end{definition}
We can use $\mathcal{M}\left(s\right)$
	to establish the uniform lower bound of the nonlinear ROB condition.
If we set $(\rho,\alpha) = \argmin \mathcal{M}(s_{\rho,\alpha})$, then it goes that	
	 \bea
	 \int_{0 }^{G_{\rho,\alpha}^{-1}\left(1-s_{\rho,\alpha}\right)}z g_{\rho,\alpha }\left(z\right)dz = \int_{G_{\rho,\alpha}^{-1}\left(1-s_{\rho,\alpha}\right) }^{+\infty} z g_{\rho,\alpha }\left(z\right)dz.
	 \eea
	  We can get $s^{*,1} = \min\limits_{\rho,\alpha\in [0,1]} s_{\rho,\alpha} \approx 0.2043$ with the help of computer assistance computation, which implies $\mathcal{M}\left( s^{*,1} \right) =0$.
	  Theorem \ref{k=1} states that the lower bound of the nonlinear ROB condition is dominated by $\mathcal{M}\left(s\right)$ 
	for a given adversarial corruption fraction $s$.

\begin{figure}[htbp]
\centering 

\begin{minipage}[b]{0.45\textwidth}
\centering 
\includegraphics[width=1\textwidth]{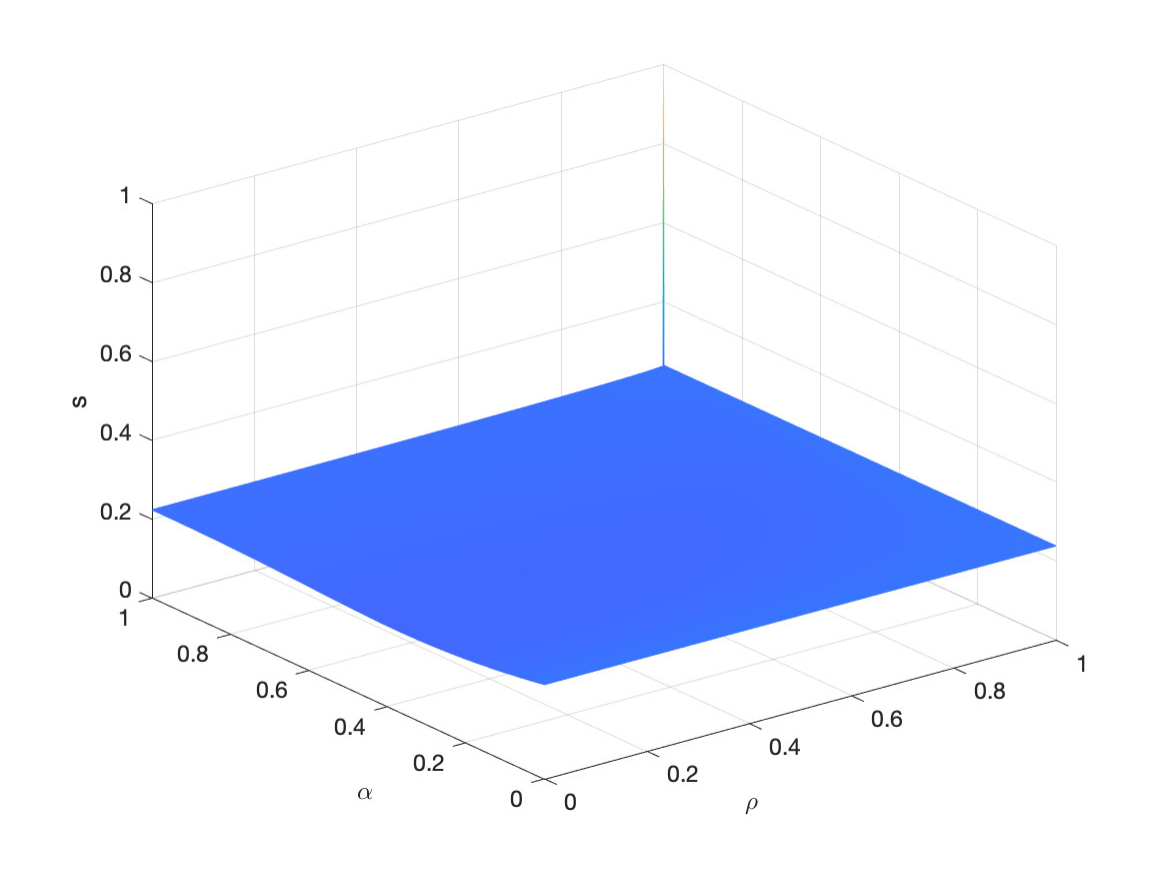} 
\subcaption*{(a) $\alpha, \rho \in [0,1]$.} 
\end{minipage}
\quad
\begin{minipage}[b]{0.45\textwidth} 
\centering 
\includegraphics[width=1\textwidth]{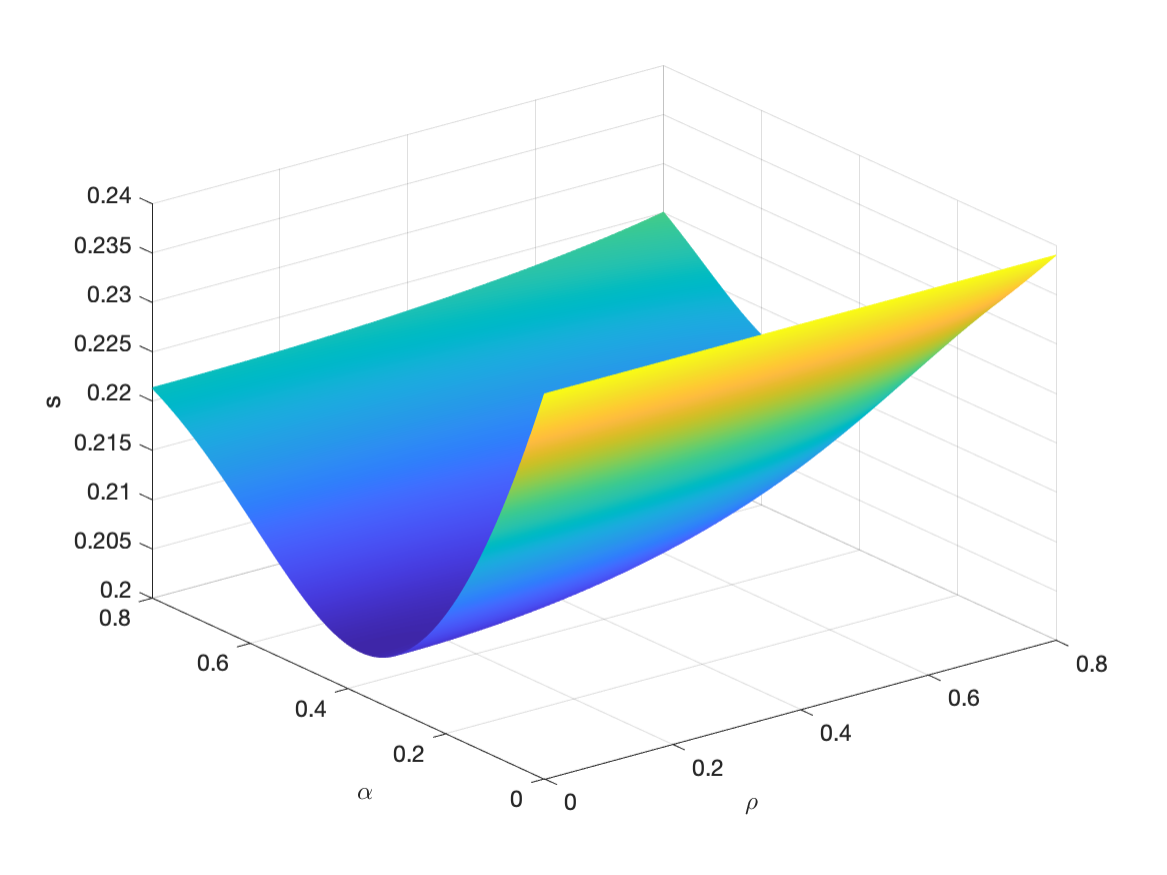}
\subcaption*{(b) $\alpha\in[0,0.8], \rho \in [0.2,0.8]$.}

\end{minipage}
\caption{The threshold $s$ for any $\alpha, \rho \in [0,1]$.}
\label{Fig_sap}
\end{figure}
	
	\begin{theorem}\label{k=1}
		Let 
		$\mA\x=\{\langle\pmb{a}_{1},\x\rangle,\cdots,\langle\pmb{a}_{m},\x\rangle\}$ where $\pmb{a}_{i}\stackrel{i.i.d}{\sim}\mathcal{N}\left(0,\pmb{I}_{n}\right)$.
		Assume that $0 \textless s \textless s^{*,1}\approx0.2043$ and $c_0$ is a positive numerical constant corresponding to $l_{0}$. 
		If
		\begin{eqnarray*}
			m \gtrsim \xi ^{-4}\cdot w^2\left(\text{cone}(\mathcal{K})\cap \mathbb{S}^{n-1}\right),
		\end{eqnarray*}
		then for any $s<s^{*,1}$,  $\mA$ satisfies the $s$-nonlinear ROB condition for all $\x,\y\in \mathcal{K}$:
		\begin{equation}
		\begin{split}
			\frac{1}{m}\min_{\left\{\mS \subset\left[m\right],  |\mS|\leq s m\right\} }
			\Bigl[\lV \lv\mA_{{\mS}^{c}} \x\rv -\lv\mA_{{\mS}^{c}} \y\rv \rV_1&-\lV \lv\mA_{\mS} \x\rv -\lv\mA_{\mS} \y\rv \rV_1 \Bigr]\\
			&\ge\left[\mathcal{M}\left(s+\xi \right)-l_{0}\xi\right]\cdot\mathrm{dist}_{1}\left(\x,\y\right)
			\end{split}
		\end{equation}
		with probability at least $1- \mathcal{O}\left(e^{-c_{0}m\xi^{2}}\right)$.
	\end{theorem}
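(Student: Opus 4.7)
}

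The plan is to reduce the minimum over subsets $\mS$ of size $sm$ to a sum of order statistics, analyze this sum pointwise via the CDF from Lemma \ref{x-y}, and then promote the pointwise estimate to a uniform one on $\mathcal{K}\times\mathcal{K}$ via a covering/Gaussian width argument. First I would observe that, for fixed $\x,\y$, the choice of $\mS$ maximizing $\lV\lv\mA_\mS\x\rv-\lv\mA_\mS\y\rv\rV_1$ is exactly the index set of the top-$sm$ values of the i.i.d.\ random variables $|Z_i|:=\bigl|\lv\langle\pmb a_i,\x\rangle\rv-\lv\langle\pmb a_i,\y\rangle\rv\bigr|$, so the left-hand side equals $\frac{1}{m}\big[\sum_{i:|Z_i|\le \tau_m}|Z_i|-\sum_{i:|Z_i|>\tau_m}|Z_i|\big]$, where $\tau_m$ is the $(1-s)$-quantile of the empirical distribution of the $|Z_i|$. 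Normalizing by $\mathrm{dist}_1(\x,\y)=\sqrt{1+\alpha^2-2\alpha\rho}$ and recognizing the population analogue gives a target close to $M(\rho,\alpha,s)$ in expectation.

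Next I would handle the pointwise concentration. Under $\pmb a\sim\mathcal N(0,\mathbf I_n)$, the variables $|Z_i|$ are i.i.d.\ with CDF $G_{\rho,\alpha}$ from Lemma \ref{x-y}. The Dvoretzky--Kiefer--Wolfowitz (DKW) type inequality cited from Appendix \ref{conin} controls the empirical CDF uniformly: $\sup_t|\hat G_{m}(t)-G_{\rho,\alpha}(t)|\le\xi$ with probability at least $1-\mathcal O(e^{-c m\xi^2})$. This inflates the effective corruption fraction from $s$ to $s+\xi$ in the empirical quantile and simultaneously gives
\[
\frac{1}{m}\Big[\,\sum_{|Z_i|\le\hat\tau_m}|Z_i|-\sum_{|Z_i|>\hat\tau_m}|Z_i|\,\Big]\ \ge\ M(\rho,\alpha,s+\xi)\cdot\mathrm{dist}_1(\x,\y)-O(\xi)\cdot\mathbb E|Z_1|,
\]
since a $\xi$-perturbation in the quantile moves at most an $O(\xi)$ mass of total variable weight, contributing at most an $O(\xi)$ error (with a constant $l_0$ depending only on the tail of $|Z_{\rho,\alpha}|$, which is sub-Gaussian).

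For the uniform statement I would pass to the sphere by homogeneity and work with $\mathcal T:=\text{cone}(\mathcal K)\cap\mathbb S^{n-1}$. Since only $(\alpha,\rho)$ enters the pointwise bound, the natural parameterization is the pair $(\x,\y)\in\mathcal T\times\mathcal T$. I would build an $\varepsilon$-net of $\mathcal T\times\mathcal T$ in the Euclidean metric; by the Sudakov-type bound \eqref{Sudakov}, its log-cardinality is controlled by $w^2(\mathcal T)/\varepsilon^2$. The random functional $(\x,\y)\mapsto\frac{1}{m}\min_\mS[\cdots]$ is Lipschitz in $(\x,\y)$ with constant $\le\frac{2}{m}\sum_i\lV\pmb a_i\rV$; using standard Gaussian concentration of $\sum_i\lV\pmb a_i\rV/m\lesssim\sqrt n$ and a discretization step size $\varepsilon\asymp\xi^2/\sqrt m$, the net error is absorbed into the $l_0\xi$ slack. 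A union bound over the net, combined with the choice $m\gtrsim\xi^{-4}w^2(\mathcal T)$, then turns the per-point DKW probability $1-\mathcal O(e^{-cm\xi^2})$ into a uniform one. Taking the minimum over $(\alpha,\rho)\in[0,1]^2$ of the pointwise lower bound produces $\mathcal M(s+\xi)$, whence the claimed inequality with slack $l_0\xi$.

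The main obstacle I anticipate is the uniform control of the non-smooth order-statistic functional. The pointwise DKW step is clean, but pulling it through the covering requires either (i) a quantitative Lipschitz estimate for the top-$sm$ sum of $|Z_i(\x,\y)|$ in $(\x,\y)$ that remains valid on an event of probability $1-\mathcal O(e^{-cm\xi^2})$, or (ii) a bracketing/empirical-process argument that treats the whole class $\{|Z_\cdot(\x,\y)|:(\x,\y)\in\mathcal T^2\}$ at once. Balancing the resulting $\varepsilon$-dependence against the $\xi^{-4}$ measurement budget, and verifying that the exceptional constants only depend on the tails of $g_{\rho,\alpha}$ uniformly in $(\alpha,\rho)\in[0,1]^2$ (which is where the explicit PDF formula in Lemma \ref{x-y} becomes essential to establish a uniform sub-Gaussian tail), is the delicate part of the argument.
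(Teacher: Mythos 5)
Your pointwise step (order statistics, DKW for the empirical quantile, concentration of the truncated sums, uniform sub-Gaussian constant over $(\rho,\alpha,t)$) matches the paper's Lemma \ref{A} in both substance and mechanics. The genuine gaps are in the uniform argument, and they are quantitative, not cosmetic.

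First, the bound you must prove is $\ge\left[\mathcal{M}\left(s+\xi\right)-l_{0}\xi\right]\cdot\mathrm{dist}_{1}\left(\x,\y\right)$, i.e.\ the allowed slack is \emph{multiplicative} in $\mathrm{dist}_{1}\left(\x,\y\right)$, which can be arbitrarily small as $\y\to\x$ within $\mathcal{K}$. A single $\varepsilon$-net of $\mathcal{T}\times\mathcal{T}$ with a Lipschitz transfer produces an \emph{additive} error of size roughly $\varepsilon$ times the Lipschitz constant, independent of $\mathrm{dist}_{1}\left(\x,\y\right)$; for pairs with $\mathrm{dist}_{1}\left(\x,\y\right)$ below that error divided by $l_{0}\xi$, the claimed inequality does not follow, no matter how small you take $\varepsilon$. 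The paper circumvents this by splitting into three regimes according to the ratio $\lV\x+\y\rV_{2}/\lV\x-\y\rV_{2}$: in the balanced regime $\mathrm{dist}_{1}\left(\x,\y\right)$ is bounded below by $1/5$ after normalization, so an additive error is acceptable, while in the unbalanced regimes one nets the \emph{normalized} difference (resp.\ sum) vector on $\text{cone}\left(\mathcal{K}\mp\mathcal{K}\right)\cap\mathbb{S}^{n-1}$, which makes the approximation error scale with $\mathrm{dist}_{1}\left(\x,\y\right)$ itself. Your proposal has no mechanism for this, and this is precisely the "delicate part" you flag but do not resolve.

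Second, your Lipschitz constant $\frac{2}{m}\sum_{i}\lV\pmb{a}_{i}\rV_{2}\lesssim\sqrt{n}$ is too crude: it forces a net resolution $\varepsilon\lesssim\xi/\sqrt{n}$ (your stated $\varepsilon\asymp\xi^{2}/\sqrt{m}$ is even smaller), hence $\log\mathcal{N}\gtrsim w^{2}\left(\mathcal{T}\right)n/\xi^{2}$ or worse, and the union bound against the per-point failure probability $e^{-cm\xi^{2}}$ then demands far more than $m\gtrsim\xi^{-4}w^{2}\left(\mathcal{T}\right)$. The paper's Lemma \ref{A4} replaces the deterministic worst-case Lipschitz bound by the high-probability estimate $\frac{1}{m}\lV\lv\mA\pmb{z}_{1}\rv-\lv\mA\pmb{z}_{2}\rv\rV_{1}\le\sqrt{2/\pi}\left(1+\epsilon\right)\lV\pmb{z}_{1}-\pmb{z}_{2}\rV_{2}$ (an $O\left(1\right)$ constant, since $\bigl|\lv\langle\pmb{a},\pmb{z}_{1}\rangle\rv-\lv\langle\pmb{a},\pmb{z}_{2}\rangle\rv\bigr|\le\lv\langle\pmb{a},\pmb{z}_{1}-\pmb{z}_{2}\rangle\rv$ concentrates at its mean $\sqrt{2/\pi}\lV\pmb{z}_{1}-\pmb{z}_{2}\rV_{2}$), which permits $\delta\asymp\xi$, a covering number $\exp\left(O\left(w^{2}\left(\mathcal{T}\right)/\xi^{2}\right)\right)$, and the stated sample complexity. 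Without an analogue of this lemma your dimension counting does not close.
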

	\begin{remark}
	The condition $s\textless s^{*,1}$ is to ensure $\mathcal{M}\left(s+\xi \right)-l_{0}\xi\textgreater 0$. In other words, without this condition the above theorem still holds.
	\end{remark}
	By setting $s=0$ and $s=1-\xi$ in the nonlinear ROB condition, we can obtain the stability of phase retrieval with upper and lower bounds. It can be considered as a random measurement version of stability in \cite{bandeira2014saving}. Specifically, we have the following corollary.
	\begin{corollary}\label{coro1}
		In the case $\mathcal{K}=\mathbb{R}^{n}$, if 
		$m \gtrsim \xi ^{-4}\cdot n$, then with probability at least $1- \mathcal{O}\left(e^{-c_{0}m\xi^{2}}\right)$, we have
		\begin{eqnarray}\label{co1}
			\left[\sqrt{\frac{2}{\pi}}\left(2-\sqrt{2}\right)-\tilde{l}\xi\right]
			\cdot\mathrm{dist}_{1}\left(\x,\y\right)
			\le\frac{1}{m}\lV \lv\mA \x\rv -\lv\mA \y\rv \rV_1
			\le \left(  \sqrt{\frac{2}{\pi}}+   \tilde{l}\xi\   \right)\cdot\mathrm{dist}_{1}\left(\x,\y\right).
		\end{eqnarray}
		The lower bound  $\sqrt{\frac{2}{\pi}}\left(2-\sqrt{2}\right)$ and upper bound $\sqrt{\frac{2}{\pi}}$ here are $\mathcal{M}\left(0\right)$ and $-\mathcal{M}\left(1\right)$ given by Proposition \ref{K} in the subsequent subsection.
	\end{corollary}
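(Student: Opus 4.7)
The plan is to read off both inequalities from Theorem~\ref{k=1} by specializing to the two extreme values $s = 0$ and $s = 1 - \xi$, and then inserting the explicit numerical values $\mathcal{M}(0) = \sqrt{2/\pi}\,(2-\sqrt{2})$ and $-\mathcal{M}(1) = \sqrt{2/\pi}$ supplied by Proposition~\ref{K}. Both extreme values of $s$ lie outside the admissible recovery range $s < s^{*,1}$, but the remark after Theorem~\ref{k=1} records that the nonlinear ROB inequality itself holds for every $s\in[0,1]$; only the sign of the right-hand side changes with $s$, and that sign structure is exactly what is used below.

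For the lower bound, I set $s = 0$ in Theorem~\ref{k=1}. The only admissible subset in the minimum is $\mS = \emptyset$, so the ROB inequality collapses to
\[
\frac{1}{m}\lV \lv\A\x\rv - \lv\A\y\rv \rV_{1} \ge \bigl(\mathcal{M}(\xi) - l_{0}\xi\bigr)\cdot \mathrm{dist}_{1}(\x,\y)
\]
uniformly over $\x,\y\in\mathbb{R}^{n}$ with the stated exceptional probability. Because the minimizer of $M(\rho,\alpha,\cdot)$ at $s=0$ is attained at the interior configuration $(\rho,\alpha)=(0,1)$ and $g_{\rho,\alpha}$ together with its inverse CDF $G_{\rho,\alpha}^{-1}$ depends smoothly on $s$ there, a short Lipschitz estimate gives $\mathcal{M}(\xi)\ge \mathcal{M}(0) - L\xi$ for a universal $L$. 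Absorbing the $O(\xi)$ terms into $\tilde{l}$ and substituting $\mathcal{M}(0) = \sqrt{2/\pi}\,(2-\sqrt{2})$ from Proposition~\ref{K} yields the lower bound.

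For the upper bound, I set $s = 1 - \xi$ in Theorem~\ref{k=1} and let $\mS^{\star}$ denote the minimizing subset, namely the $(1-\xi)m$ indices with the largest values of $\bigl|\lv\langle\pmb{a}_{i},\x\rangle\rv - \lv\langle\pmb{a}_{i},\y\rangle\rv\bigr|$. Using
\[
\lV \lv\A_{\mS^{\star}}\x\rv - \lv\A_{\mS^{\star}}\y\rv \rV_{1} = \lV \lv\A\x\rv - \lv\A\y\rv \rV_{1} - \lV \lv\A_{{\mS^{\star}}^{c}}\x\rv - \lv\A_{{\mS^{\star}}^{c}}\y\rv \rV_{1}
\]
and rearranging the ROB inequality gives
\[
\frac{1}{m}\lV \lv\A\x\rv - \lv\A\y\rv \rV_{1} \le \frac{2}{m}\lV \lv\A_{{\mS^{\star}}^{c}}\x\rv - \lv\A_{{\mS^{\star}}^{c}}\y\rv \rV_{1} + \bigl(-\mathcal{M}(1) + l_{0}\xi\bigr)\cdot \mathrm{dist}_{1}(\x,\y).
\]
Since $|{\mS^{\star}}^{c}| = \xi m$, the residual is bounded by combining $\bigl|\lv\langle\pmb{a}_{i},\x\rangle\rv - \lv\langle\pmb{a}_{i},\y\rangle\rv\bigr| \le \min\bigl(\lv\langle\pmb{a}_{i},\x-\y\rangle\rv, \lv\langle\pmb{a}_{i},\x+\y\rangle\rv\bigr)$ with sign invariance to align with $\mathrm{dist}_{1}$, and a uniform Gaussian $\ell_{1}$-concentration over all $\xi m$-subvectors yields $\frac{1}{m}\lV \lv\A_{{\mS^{\star}}^{c}}\x\rv - \lv\A_{{\mS^{\star}}^{c}}\y\rv \rV_{1} \lesssim \xi\cdot \mathrm{dist}_{1}(\x,\y)$. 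Substituting $-\mathcal{M}(1) = \sqrt{2/\pi}$ and collecting constants into $\tilde{l}$ delivers the upper bound.

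The main obstacle is establishing the two auxiliary controls that sit outside Theorem~\ref{k=1} itself: (i) Lipschitz continuity of $\mathcal{M}$ at the endpoints $s=0$ and $s=1$, which amounts to verifying that the extremal $(\rho,\alpha)$ in the definition of $\mathcal{M}$ stays bounded away from the degenerate boundary $\mathrm{dist}_{1}(\x,\y)=0$ under an $O(\xi)$ perturbation of $s$; and (ii) a uniform-in-$(\x,\y,\mS)$ Gaussian concentration for the $\ell_{1}$-norm of an arbitrary $\xi m$-subvector. Both reduce to the same covering-number and sub-Gaussian moment estimates that are already deployed inside the proof of Theorem~\ref{k=1}, so once they are in place the corollary follows by direct substitution.
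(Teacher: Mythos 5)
Your proposal follows essentially the same route as the paper: specialize Theorem~\ref{k=1} to $s=0$ and $s=1-\xi$, invoke the Lipschitz continuity of $\mathcal{M}$ together with the endpoint values $\mathcal{M}(0)=\sqrt{2/\pi}\,(2-\sqrt{2})$ and $\mathcal{M}(1)=-\sqrt{2/\pi}$ from Proposition~\ref{K}, and absorb all $O(\xi)$ terms into $\tilde{l}$. One caveat on your auxiliary claim (ii): a uniform bound $\frac{1}{m}\lV\lv\A_{\mS}\x\rv-\lv\A_{\mS}\y\rv\rV_1\lesssim\xi\cdot\mathrm{dist}_1(\x,\y)$ over \emph{arbitrary} $\xi m$-subsets is false (the largest $\xi m$ order statistics of $m$ folded Gaussians sum to order $\xi\sqrt{\log(1/\xi)}\,m$), but it is also unnecessary, since ${\mS^{\star}}^{c}$ consists of the \emph{smallest} $\xi m$ entries, so the deterministic bound by $\xi$ times the full $\ell_1$-norm combined with Lemma~\ref{A4} applied to the whole vector suffices --- which is exactly what the paper does.
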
	
		
	\subsection{Proof of Theorem \ref{k=1} and Corollary \ref{coro1}}
	We divide the proof of Theorem \ref{k=1} into three steps. 
	The first step is to construct some properties of the minimum balance function $\mathcal{M}\left(s\right)$. 
	It is then stated that $\mA$ obeys the nonlinear ROB condition for fixed $\x,\y$. Finally, we use the standard covering-number based argument and Sudakov's minoration inequality to extend the result for all $\x,\y \in \mathcal{K}$.
		
	{\bf Step 1: Properties of $\mathcal{M}\left(s\right)$.}
	We will prove that $\mathcal{M}\left(s\right)$ is a well-defined function and has a unique zero $s^{*,1}$.
	\begin{proposition}\label{K}
		The minimum balance function $\mathcal{M}\left(s\right)$ satisfies:
		\begin{itemize}
			\item[$\mathrm{(a)}$]$\mathcal{M}\left(s\right)$ is strictly monotonically decreasing on interval $[0,1]$;
			\item[$\mathrm{(b)}$]$\mathcal{M}\left(0\right)=\sqrt{\frac{2}{\pi}}\left(2-\sqrt{2}\right)$ and $\mathcal{M}\left(1\right)=-\sqrt{\frac{2}{\pi}}$;
			\item[$\mathrm{(c)}$]$\mathcal{M}\left(s\right)$ has a unique zero $s^{*,1}$, that is $\mathcal{M}\left(s^{*,1}\right)= 0$. 
		\end{itemize}
	\end{proposition}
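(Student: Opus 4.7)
The plan is to handle (a) by a pointwise monotonicity argument in $s$ for fixed $(\rho,\alpha)$, then to compute the endpoint values in (b) explicitly, and finally to deduce (c) by combining continuity of $\mathcal{M}$ with the intermediate value theorem. The structural observation behind everything is that $M(\rho,\alpha,s)$ depends on $s$ only through the threshold $t(s):=G_{\rho,\alpha}^{-1}(1-s)$, which is strictly decreasing in $s$ wherever $g_{\rho,\alpha}$ is positive.

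For part (a), fix $(\rho,\alpha)\in[0,1]^{2}$ and rewrite
\[
M(\rho,\alpha,s)=\frac{1}{\sqrt{1+\alpha^{2}-2\alpha\rho}}\left(\int_{0}^{t(s)}z\,g_{\rho,\alpha}(z)\,dz-\int_{t(s)}^{+\infty}z\,g_{\rho,\alpha}(z)\,dz\right).
\]
As $s$ grows, $t(s)$ decreases, so the first integral strictly shrinks while the second strictly grows; hence $s\mapsto M(\rho,\alpha,s)$ is strictly decreasing. The usual min-preserves-strict-decrease argument then gives strict monotonicity of $\mathcal{M}$: for $s_{1}<s_{2}$, pick any $(\rho^{\star},\alpha^{\star})$ attaining $\mathcal{M}(s_{1})$ to obtain $\mathcal{M}(s_{1})=M(\rho^{\star},\alpha^{\star},s_{1})>M(\rho^{\star},\alpha^{\star},s_{2})\ge\mathcal{M}(s_{2})$.

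For part (b), plugging $s=0$ into $M$ gives $t(0)=+\infty$ and hence $M(\rho,\alpha,0)=\mathbb{E}|Z_{\rho,\alpha}|/\sqrt{1+\alpha^{2}-2\alpha\rho}$. Using the decomposition $\mathbb{E}|Z_{\rho,\alpha}|=\mathbb{E}|X|+\mathbb{E}|Y|-2\,\mathbb{E}\min(|X|,|Y|)$ together with Lemma \ref{x-y} (or directly the identity $\int_{0}^{\infty}\mathrm{erfc}(u)^{2}\,du=(2-\sqrt{2})/\sqrt{\pi}$ in the independent case), the task reduces to minimizing an explicit smooth function of $(\rho,\alpha)$ on $[0,1]^{2}$; its minimum is attained at $(\rho,\alpha)=(0,1)$ with value $\sqrt{2/\pi}(2-\sqrt{2})$. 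At $s=1$, $t(1)=0$ gives $M(\rho,\alpha,1)=-\mathbb{E}|Z_{\rho,\alpha}|/\sqrt{1+\alpha^{2}-2\alpha\rho}$; the sharp inequality $\mathbb{E}|Z_{\rho,\alpha}|\le\mathbb{E}|X-Y|=\sqrt{2/\pi}\,\sqrt{1+\alpha^{2}-2\alpha\rho}$, with equality exactly when $X$ and $Y$ almost surely share sign (i.e.\ $\rho=1$ or $\alpha=0$), gives $\mathcal{M}(1)=-\sqrt{2/\pi}$.

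Part (c) is then immediate: joint continuity of $M(\rho,\alpha,s)$ together with compactness of $[0,1]^{2}$ makes $\mathcal{M}(s)$ continuous in $s$, so the sign change $\mathcal{M}(0)>0>\mathcal{M}(1)$ combined with strict monotonicity from (a) yields a unique zero $s^{*,1}\in(0,1)$ by the intermediate value theorem. The main obstacle will be the $(\rho,\alpha)$-minimization inside (b), since the density $g_{\rho,\alpha}$ from Lemma \ref{x-y} is cumbersome and one must handle the removable singularity at $(\rho,\alpha)=(1,1)$, where both numerator and denominator vanish; the numerical landscape in Figure \ref{Fig_sap} makes the claim very plausible, and the sharpness can be pinned down either by a critical-point plus boundary analysis of the reduced function or, as a backup, by a geometric moment-comparison argument driving the ratio toward its claimed lower bound.
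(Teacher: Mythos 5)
Your parts (a) and (c) track the paper's proof essentially verbatim: the min-of-pointwise-decreasing argument ($\mathcal{M}(s_1)=M(\rho^{\star},\alpha^{\star},s_1)>M(\rho^{\star},\alpha^{\star},s_2)\ge\mathcal{M}(s_2)$) and the continuity-plus-monotonicity route to a unique zero are exactly what the paper does; your extra sentence justifying the pointwise strict decrease via the threshold $t(s)=G_{\rho,\alpha}^{-1}(1-s)$ is a small improvement over the paper's bare ``by definition''. Your derivation of $\mathcal{M}(1)$ from the sharp bound $\bigl|\,|X|-|Y|\,\bigr|\le |X-Y|$, with equality iff $XY\ge 0$ almost surely, is also correct and arguably cleaner than the paper's.

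The genuine gap is the computation of $\mathcal{M}(0)$. You reduce it to ``minimizing an explicit smooth function of $(\rho,\alpha)$'' but never exhibit that function or carry out the minimization, and you concede at the end that you are leaning on Figure \ref{Fig_sap} and plausibility. The decomposition you chose, $\mathbb{E}\bigl|\,|X|-|Y|\,\bigr|=\mathbb{E}|X|+\mathbb{E}|Y|-2\,\mathbb{E}\min(|X|,|Y|)$, is the source of the trouble: $\mathbb{E}\min(|X|,|Y|)$ for \emph{correlated} Gaussians has no convenient closed form, and your $\operatorname{erfc}$ identity only covers $\rho=0$. The paper's key device, which you are missing, is the algebraic identity $\bigl|\,|a|-|b|\,\bigr|=|a+b|+|a-b|-|a|-|b|$ (equivalently $|a+b|+|a-b|=2\max(|a|,|b|)$), which expresses $\bigl|\,|X|-|Y|\,\bigr|$ as a signed sum of absolute values of single Gaussians, so that
\begin{equation*}
\mathbb{E}\bigl|\,|X|-|Y|\,\bigr|=\sqrt{\tfrac{2}{\pi}}\left(\sqrt{1+\alpha^{2}+2\alpha\rho}+\sqrt{1+\alpha^{2}-2\alpha\rho}-1-\alpha\right),
\end{equation*}
turning $\varphi(\rho,\alpha)$ into an elementary function whose minimum over $[0,1]^{2}$ is located by two one-line derivative computations ($\partial_{\rho}\varphi\ge 0$ for every fixed $\alpha$, then $\partial_{\alpha}\varphi(0,\alpha)\le 0$, giving the minimizer $(0,1)$ and the value $\sqrt{2/\pi}\,(2-\sqrt{2})$). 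Your own decomposition could be rescued by the further identity $2\min(|a|,|b|)=2|a|+2|b|-|a+b|-|a-b|$, but as written the minimization step --- which is the substance of part (b) --- is not done.
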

	\begin{proof}
		\noindent(a)
		For monotonicity, let $0\le s_{1} \textless s_{2} \le1$ and $\left(\rho_{1},\alpha_{1}\right)=\argmin\mathcal{M}\left(s_{1}\right)$, $\left(\rho_{2},\alpha_{2}\right)=\argmin\mathcal{M}\left(s_{2}\right)$.
		By definition, 
		\begin{eqnarray*} 
			\mathcal{M}\left(s_{1}\right)=M\left(\rho_{1},\alpha_{1},s_{1}\right) \textgreater M\left(\rho_{1},\alpha_{1},s_{2}\right)\geq M\left(\rho_{2},\alpha_{2},s_{2}\right)=\mathcal{M}\left(s_{2}\right).
		\end{eqnarray*}
	\noindent(b)
	It can be checked that $\lv \lv a \rv-\lv b \rv \rv =  \lv a+b \rv + \lv a-b \rv - \lv a \rv-\lv b \rv$, then we have
	\begin{eqnarray*} 
		\lV\lv\mA\x\rv-\lv\mA\y\rv\rV_{1}&=&\lV\mA\left(\x+\y\right)\rV_{1}+\lV\mA\left(\x-\y\right)\rV_{1}-\lV\mA \x\rV_{1}-\lV\mA\y\rV_{1}\\
		&=&\sum_{i=1}^{m}\left(\lv X_{i}+Y_{i}\rv+\lv X_{i}-Y_{i}\rv-\lv X_{i}\rv-\lv Y_{i}\rv\right).
	\end{eqnarray*}
	For $s=0$ or $1$, we need consider $\mathbb{E}\left(\bigl|\lv X\rv-\lv Y\rv\bigr |\right)$. Thus, let
	\begin{eqnarray*} 
		\varphi \left(\rho,\alpha\right)&=& \frac{\int_0^{+\infty} z g_{\rho,\alpha}(z)dz}{\mathrm{dist}_{1}\left(\x,\y \right)} =\frac{\mathbb{E} \left(\bigl|\lv X \rv-\lv Y \rv\bigr|\right)}{ \mathrm{dist}_{1}\left(\x,\y \right) }
		= \frac{\mathbb{E}\left(\lv X+Y \rv+\lv X-Y \rv-\lv X \rv-\lv Y \rv\right)}{ \mathrm{dist}_{1}\left(\x,\y\right) }\\ 
		&=& \sqrt{\frac{2}{\pi}}\frac{\lV\x+\y\rV_{2}+\lV\x-\y\rV_{2}-\lV\x\rV_{2}-\lV\y\rV_{2}}{\mathrm{dist}_{1}\left(\x,\y\right)}\\
		&=&\sqrt{\frac{2}{\pi}} \frac{\left(1+\alpha^{2}+2\alpha\rho\right)^{1/2}+\left(1+\alpha^{2}-2\alpha\rho\right)^{1/2}-1-\alpha}{\left(1+\alpha^{2}-2\alpha\rho\right)^{1/2}}.
	\end{eqnarray*} 
	Then, 
	\begin{eqnarray*} 
		\frac{\partial \varphi \left(\rho,\alpha\right)}{\partial\rho}
		&=&\sqrt{\frac{2}{\pi}}\frac{\alpha\left(\alpha^{2}+1\right)}{\left(1+\alpha^{2}-2\alpha\rho\right)^{3/2}}\left[\frac{2}{\left(1+\alpha^{2}+2\alpha\rho\right)^{1/2}}-\frac{\alpha+1}{\alpha^{2}+1}\right]\\
		&\ge&\sqrt{\frac{2}{\pi}}\frac{\alpha\left(\alpha^{2}+1\right)}{\left(1+\alpha^{2}-2\alpha\rho\right)^{3/2}}\left(\frac{2}{\alpha+1}-\frac{\alpha+1}{\alpha^{2}+1}\right)\ge0.
	\end{eqnarray*}       
	Therefore, for fixed $\alpha\in\left(0,1\right]$, $ \varphi\left(\rho,\alpha\right)$ is increasing with respect to $\rho$. Furthermore,
	\begin{eqnarray*} 
		\frac{\partial \varphi \left(0,\alpha\right)}{\partial\alpha}=\sqrt{\frac{2}{\pi}}\frac{\alpha-1}{\left(1+\alpha^{2}\right)^{3/2}}\le0.
	\end{eqnarray*} 
	Then we can get 
	\begin{eqnarray*} 
		\mathcal{M}\left(0\right)&=&\min_{\rho,\alpha} \varphi \left(\rho,\alpha\right) = \min_{\alpha}\varphi\left(0,\alpha\right) =\varphi \left(0,1\right)=\sqrt{\frac{2}{\pi}}\left(2-\sqrt{2}\right),\\
		\mathcal{M}\left(1\right)&=&-\max_{\rho,\alpha} \varphi \left(\rho,\alpha\right) = -\max_{\alpha}\varphi\left(1,\alpha\right) =
		-\varphi \left(1,\alpha\right)=-\sqrt{\frac{2}{\pi}}.\\
	\end{eqnarray*} 
	\noindent(c) 
	 The continuity of $\mathcal{M}\left(s\right)$ is obvious. By Proposition \ref{K}.(a) and Proposition \ref{K}.(b), $\mathcal{M}\left(s\right)$ is bounded, thus it is well-defined on $\left[0,1\right]$. Then by monotonicity of $\mathcal{M}\left(s\right)$, the uniqueness of zero can be obtained. 
		\end{proof}	
	{\bf Step 2: Proof for fixed vectors.} In this step, we will establish nonlinear ROB condition for fixed $\x,\y$.
	\begin{lemma}\label{A}
		Fix $\x,\y\in \mathcal{K}$, let $\rho=\frac{\langle\x, \y\rangle}{\lV\x\rV_{2}\cdot\lV\y\rV_{2}}$ and $\alpha=\lV\y\rV_{2}/\lV\x\rV_{2}$. 
		For $0 \textless s\textless s^{*,1}$, there exists a positive numerical constant $c_{1}$ corresponding to $l_{1}$ such that with probability exceeding  $1-\mathcal{O}(e^{-c_{1}m\xi^{2}})$,	
		\begin{eqnarray}
			\min_{\mS \subset[m],  |\mS|\leq s m}\frac{\lV\lv\mA_{{\mS}^{c}}\pmb{x}\rv-\lv\mA_{{\mS}^{c}}\pmb{y}\rv\rV_{1}}{\mathrm{dist}_{1}\left(\x,\y\right)}
			&\geq& m\left[M_{1}(\rho,\alpha, s+\xi)-l_{1}\xi\right],\label{A1}\\	
			\max_{\mS \subset[m],  |\mS|\leq s m}\frac{\lV\lv\mA_{\mS}\pmb{x}\rv-\lv\mA_{\mS}\pmb{y}\rv\rV_{1}}{\mathrm{dist}_{1}\left(\x,\y\right)}
			&\leq& m \left[M_{2}(\rho,\alpha, s+\xi)+l_{1}\xi \right].\label{A2}
		\end{eqnarray}
	\end{lemma}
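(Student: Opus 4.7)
The plan is to reduce both bounds to concentration properties of the i.i.d.\ sequence $|Z_{i}|=\bigl||X_{i}|-|Y_{i}|\bigr|$, $i=1,\ldots,m$, whose density $g_{\rho,\alpha}$ and CDF $G_{\rho,\alpha}$ are the explicit objects produced by Lemma \ref{x-y}. A combinatorial reduction comes first: since $\lV\lv\mA_{\mS^{c}}\x\rv-\lv\mA_{\mS^{c}}\y\rv\rV_{1}=\sum_{i\notin \mS}|Z_{i}|$, the minimum over $|\mS|\le sm$ is attained by putting the $sm$ largest $|Z_{i}|$ into $\mS$ and the maximum of $\sum_{i\in\mS}|Z_{i}|$ by the same choice. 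Writing $|Z|_{(1)}\le\cdots\le|Z|_{(m)}$ for the order statistics, the left-hand sides of \eqref{A1} and \eqref{A2} thus equal $\sum_{i=1}^{(1-s)m}|Z|_{(i)}/\mathrm{dist}_{1}(\x,\y)$ and $\sum_{i=(1-s)m+1}^{m}|Z|_{(i)}/\mathrm{dist}_{1}(\x,\y)$, respectively.

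For \eqref{A1}, I would use the exact identity
\[
\sum_{i=1}^{(1-s)m}|Z|_{(i)}=m\int_{0}^{\infty}\max\{0,(1-s)-F_{m}(t)\}\,dt,
\]
where $F_{m}$ is the empirical CDF of $\{|Z_{i}|\}$. The one-sided Dvoretzky-Kiefer-Wolfowitz inequality of Appendix \ref{conin} guarantees $F_{m}(t)\le G_{\rho,\alpha}(t)+\xi$ uniformly in $t$ with probability at least $1-2e^{-2m\xi^{2}}$, so the integrand is pointwise bounded below by $\max\{0,(1-s-\xi)-G_{\rho,\alpha}(t)\}$. A single integration by parts collapses the resulting integral to $\int_{0}^{G_{\rho,\alpha}^{-1}(1-s-\xi)}t\,g_{\rho,\alpha}(t)\,dt=\mathrm{dist}_{1}(\x,\y)\cdot M_{1}(\rho,\alpha,s+\xi)$. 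Dividing by $\mathrm{dist}_{1}(\x,\y)$ produces \eqref{A1} with the slack $l_{1}\xi$ available to absorb edge effects (in fact $l_{1}=0$ already suffices at this stage).

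For \eqref{A2}, I would use the decomposition $\sum_{i=(1-s)m+1}^{m}|Z|_{(i)}=\sum_{i=1}^{m}|Z_{i}|-\sum_{i=1}^{(1-s)m}|Z|_{(i)}$ and bound the total sum by Bernstein concentration. The main technical point is to make that concentration uniform in $(\rho,\alpha)\in[0,1]^{2}$, especially near the degenerate corner $\rho=\alpha=1$ where $\mathrm{dist}_{1}$ collapses to zero. This is handled by the elementary inequality $\bigl||a|-|b|\bigr|\le\min\{|a-b|,|a+b|\}$: applied with $a=\langle\pmb{a}_{i},\x\rangle$ and $b=\langle\pmb{a}_{i},\y\rangle$, it forces $|Z_{i}|\le\min\{|\langle\pmb{a}_{i},\x-\y\rangle|,|\langle\pmb{a}_{i},\x+\y\rangle|\}$, whose $\psi_{2}$-norm is at most a constant times $\min\{\lV\x-\y\rV_{2},\lV\x+\y\rV_{2}\}=\mathrm{dist}_{1}(\x,\y)$. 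Consequently $|Z_{i}|/\mathrm{dist}_{1}(\x,\y)$ is sub-Gaussian with absolute-constant $\psi_{2}$-norm, and Bernstein yields
\[
\frac{1}{m\cdot\mathrm{dist}_{1}(\x,\y)}\sum_{i=1}^{m}|Z_{i}|\le\frac{\mathbb{E}|Z|}{\mathrm{dist}_{1}(\x,\y)}+C\xi
\]
with probability at least $1-2e^{-cm\xi^{2}}$. Since $M_{1}(\rho,\alpha,s+\xi)+M_{2}(\rho,\alpha,s+\xi)=\mathbb{E}|Z|/\mathrm{dist}_{1}(\x,\y)$ is independent of the quantile parameter, subtracting the lower bound from \eqref{A1} delivers the upper bound $M_{2}(\rho,\alpha,s+\xi)+C\xi$ claimed in \eqref{A2}. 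A union bound over the DKW and Bernstein events preserves the overall $1-\mathcal{O}(e^{-c_{1}m\xi^{2}})$ probability and $C$ is absorbed into $l_{1}$; the hard part of the whole argument is precisely the elementary $\min\{|a-b|,|a+b|\}$ trick that yields the uniform sub-Gaussian scaling, after which everything else is routine.
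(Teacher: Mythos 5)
Your proof is correct, but it reaches Lemma \ref{A} by a genuinely different route than the paper. For the lower bound \eqref{A1}, the paper truncates at the \emph{population} quantile $t=G_{\rho,\alpha}^{-1}(1-r)$, applies Hoeffding to the truncated variable $\Gamma(t)=h_1\bigl(\lv Z_{\rho,\alpha}\rv/\mathrm{dist}_1(\x,\y)\bigr)$, and then uses the DKW-type Lemma \ref{DKW} to argue that at most a $(1-s)$-fraction of samples fall in $[0,t]$, so that $\sum_i\Gamma_i(t)$ underestimates the trimmed sum; this forces it to prove a uniform-in-$t$ bound $\lV\Gamma(t)\rV_{\psi_2}\le K_0$ via a compactification argument on $\mathbb{R}\cup\{\infty\}$. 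You instead use the exact order-statistics identity $\sum_{i\le(1-s)m}\lv Z\rv_{(i)}=m\int_0^\infty\max\{0,(1-s)-F_m(t)\}\,dt$, hit the empirical CDF with one-sided DKW, and integrate by parts to land exactly on $mM_1(\rho,\alpha,s+\xi)$ --- no truncated-variable concentration is needed at all, and your $l_1=0$ observation is right. For \eqref{A2} the paper repeats its argument symmetrically with $h_2$, whereas you deduce it from \eqref{A1} plus concentration of the \emph{full} sum and the identity $M_1+M_2=\mathbb{E}\lv Z\rv/\mathrm{dist}_1(\x,\y)$; your inequality $\bigl|\lv a\rv-\lv b\rv\bigr|\le\min\{\lv a-b\rv,\lv a+b\rv\}$ gives $\lV\,\lv Z\rv/\mathrm{dist}_1(\x,\y)\rV_{\psi_2}\le\sqrt{8/3}$ uniformly in $(\rho,\alpha)$, which is the same mechanism the paper uses for its $t=+\infty$ case but deployed so as to make the compactness step unnecessary. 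Two cosmetic remarks: since the normalized summands are sub-Gaussian, the tail bound you invoke is Hoeffding's rather than Bernstein's (Bernstein would only give the weaker $\min\{\epsilon^2,\epsilon\}$ exponent); and the reduction to order statistics implicitly takes $sm$ integral and assumes $\x\ne\pm\y$, both of which the paper also glosses over. Neither affects correctness.
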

	\begin{proof}	
		Denote $h_{1}\left(x\right) = x \cdot \mathbbm{1}_{[0,t]}(x)$, and $\Gamma \left(t\right) = h_{1}\left(\frac{\lv Z_{\rho,\alpha}\rv}{\sqrt{1+\alpha^{2}-2\alpha\rho}}\right)$. 
		We first prove that for all $\rho,\alpha\in\left[0,1\right]$ and $t \in \mathbb{R}^{+}\cup \{+\infty\}$, 
		there exists $ K_{0}>0$ such that $	\lV\Gamma\left(t\right) \rV_{\psi_{2}} \leq K_{0}$.
		Since $\Gamma \left(t\right) $ is bounded, thus for fixed $t$, $ \| \Gamma \left(t\right)  \|_{\psi_{2}}< \infty $.
		When $t=+\infty $, we claim that $\Gamma \left(t\right) =\frac{\lv Z_{\rho,\alpha}\rv}{\sqrt{1+\alpha^{2}-2\alpha\rho}}$ is sub-Gaussian random variable. Actually, by the definition of $\lV\cdot\rV_{\psi_{2}}$
		\begin{eqnarray*}
			\lV\frac{\lv Z_{\rho,\alpha}\rv}{\sqrt{1+\alpha^{2}-2\alpha\rho}}\rV_{\psi_{2}}=\lV\frac{\lv \langle\pmb{a},\x\rangle\rv-\lv \langle\pmb{a},\y\rangle\rv}{\mathrm{dits}_{1}\left(\x,\y\right)}\rV_{\psi_{2}}
			\le 	\lV\langle\pmb{a},\frac{\x-\y}{\lV\x-\y\rV_{2}}\rangle\rV_{\psi_{2}}=\lV X\rV_{\psi_{2}}< \infty,
		\end{eqnarray*}
		where $X\sim\mathcal{N}\left(0,1\right)$.
		We then consider  the mapping
		\begin{eqnarray*}
			\mathcal{F}:\mathbb{R}\cup\{\infty \}&\longrightarrow&\mathbb{R^{+}}\cup\{+\infty \}\\
			t&\longmapsto&	 \lV \Gamma \left(t\right) \rV_{\psi_{2}}.
		\end{eqnarray*}	
		By the definition of sub-Gaussian norm, we verified that $\mathcal{F}$ is a continuous mapping.
		Apart from this, the mapping
		\begin{eqnarray*}
			\mathcal{G}:S^{1}&\longrightarrow&\mathbb{R}\cup\{\infty \}\\
			e^{i\theta}&\longmapsto&	 \frac{1+\tan\left(\theta/2\right)}{1-\tan\left(\theta/2\right)}:= t
		\end{eqnarray*}		
		establishes the homeomorphism between $S^{1}$ and $\mathbb{R}\cup\{\infty \}$, that is $S^{1}\cong \mathbb{R}\cup\{\infty \}$.
		We then direct our attention towards the composite mapping
		\begin{eqnarray*}
			\mathcal{F}\circ\mathcal{G}:S^{1}&\longrightarrow&\mathbb{R^{+}}\cup\{+\infty \}\\
			e^{i\theta}&\longmapsto&	 \lV \Gamma\left(t\right)\rV_{\psi_{2}}.
		\end{eqnarray*}
		While $\mathcal{G}^{-1}\left(\mathbb{R^{+}}\cup\{+\infty \}\right)=[-\frac{\pi}{2},\frac{\pi}{2}]$,
		then $\mathcal{F}\circ\mathcal{G}$ is a continuous mapping on compact set $[-\frac{\pi}{2},\frac{\pi}{2}]$.
		Thereby $\mathcal{F}\circ\mathcal{G}$ is a bounded mapping, which means that there exists
		$ K_{0} >0$, such that $\| \Gamma\left(t\right)\|_{\psi_{2}} \leq K_{0}$ uniformly.
						
		Let $r\textgreater s\in (0,1)$ be a fixed constant and $t = G_{\rho,\alpha}^{-1}\left(1-r\right)$. 
		We consider the sampling set $\Gamma=\{\Gamma_{1}\left(t\right),\cdots,\Gamma_{m}\left(t\right)\}$ and get 
		\begin{eqnarray*}
			\mathbb{E}\left[\Gamma \left(t\right)\right]=\frac{\int_{0 }^{t}z g_{\rho,\alpha }\left(z\right)dz}{\sqrt{1+\alpha^{2}-2\alpha\rho}}=M_{1}(\rho,\alpha,r).
		\end{eqnarray*}	
		Thus by Hoeffding inequality in \cite{vershynin2018high}, we derive
		\begin{eqnarray}\label{Y1}
			\mathbb{P}\left(\lv {1\over m} \sum_{i=1}^m \Gamma_i\left(t\right) - \mathbb{E}\left[\Gamma \left(t\right)\right] \rv >\varepsilon_1\right) \leq 2e^{- c_{0}\,m\varepsilon_1^2/K_{0}^{2},}.
		\end{eqnarray} 
		
		Based on the Dvoretzky-Kiefer-Wolfowitz type inequality in Lemma \ref{DKW}, 
		by setting $\epsilon = r-s \in (0, 1)$ and $\eta=1-s \in \left(0, 1\right)$, we have
		\begin{eqnarray*}
			t = G_{\rho,\alpha}^{-1}\left(1-r\right) \leq \widehat{G}_{\rho,\alpha}^{-1}\left(1-s\right)
		\end{eqnarray*} 
		with probability exceeding $1-4e^{-2m\left(r-s\right)^{2}}$.
		Since $\widehat{G}_{\rho,\alpha}(z)$ and $\widehat{G}_{\rho,\alpha}^{-1}\left(z\right)$ are monotonically increasing function, we can get
		\begin{eqnarray*}
			\widehat{G}_{\rho,\alpha}(t) \leq 1-s
		\end{eqnarray*} 
		with probability  at least $1 - 4e^{-2m\left(r-s\right)^2}$. 
		Thus, we can know at most $(1-s)$-fraction of the samples lie in $[0, t]$ with probability  at least $1 - 4e^{-2m(r-s)^2}$, 
		and the samples lying in $[0, t]$ are smaller than those of the remaining samples.
		Therefore, we have 
		\begin{eqnarray}\label{Y2}
			\min_{\mS \subset[m],  |\mS|\leq s m}\frac{\lV\lv\mA_{{\mS}^{c}}\pmb{x}\rv-\lv\mA_{{\mS}^{c}}\pmb{y}\rv\rV_{1}}{\mathrm{dist}_{1}\left(\x,\y\right)}
			\ge \sum_{i=1}^m \Gamma_i\left(t\right)
		\end{eqnarray} 
		with probability  at least $1 - 4e^{-2m\left(r-s\right)^2}$ due to the left side of the above inequality represents the smallest $\left(1-s\right)m$ samples in the sampling set $\Gamma$.
		
		Combining (\ref{Y1}) with (\ref{Y2}), we get
		\begin{eqnarray*}
			\frac{1}{m}\min_{\mS \subset[m],  |\mS|\leq s m}\frac{\lV\lv\mA_{{\mS}^{c}}\pmb{x}\rv-\lv\mA_{{\mS}^{c}}\pmb{y}\rv\rV_{1}}{\mathrm{dist}_{1}\left(\x,\y\right)}
			\ge \frac{1}{m}\sum_{i=1}^m  \Gamma_i\left(t\right) \ge \mathbb{E}\left[\Gamma \left(t\right)\right] - \varepsilon_1 = M_{1}\left(\rho,\alpha,s+\epsilon\right)-\varepsilon_1
		\end{eqnarray*}
		with certain probability. 
		By setting $\epsilon =\xi$ and $\varepsilon_1= l_{1}\xi$,
		we finally get (\ref{A1}) with probability  exceeding $1 -\mathcal{O}\left(e^{-c_{1}m\xi^{2}}\right)$.
		
		Similarly, we can establish (\ref{A2}) if we 
		set $h_{2}\left(x\right) = x\cdot \mathbbm{1}_{\left[t,+\infty\right)}\left(x\right)$ and $\Gamma\left(t\right) =h_{2}\left(\frac{\lv Z_{\rho,\alpha}\rv}{\sqrt{1+\alpha^{2}-2\alpha\rho}}\right)$.
	\end{proof}
	
	{\bf Step 3: Uniform  argument.}
	By Lemma \ref{A},  for fixed $\x_{0},\y_{0}\in\mathcal{K}$ with parameters $\rho_{0}$ and $\alpha_{0}$, with probability at least $1 - \mathcal{O}\left(e^{- c_{1} m\xi^2}\right)$ we have that
	\begin{eqnarray*}\label{A3}
	\begin{split}
		&\min_{\mS \subset[m],  |\mS|\leq s m} \lV\lv\mA_{{\mS}^{c}}\x_{0}\rv-\lv\mA_{{\mS}^{c}}\y_{0}\rv\rV_{1}-\lV\lv\mA_{{\mS}}\x_{0}\rv-\lv\mA_{{\mS}}\y_{0}\rv\rV_{1}\\
		&= \min_{\mS \subset[m],  |\mS|\leq s m} \left[\sum_{i\in{\mS}^{c}} \bigl|\lv \langle\pmb{a}_{i}, \pmb{x}_{0}\rangle \rv-\lv\langle\pmb{a}_{i}, \pmb{y}_{0}\rangle \rv\bigr|-\sum_{i\in\mS} \bigl|\lv \langle\pmb{a}_{i}, \pmb{x}_{0}\rangle\rv-\lv \langle\pmb{a}_{i}, \pmb{y}_{0}\rangle \rv\bigr|\right]\\
		&\ge m\left[M_{1}\left(\rho_{0},\alpha_{0},s+\xi\right)-M_{2}\left(\rho_{0},\alpha_{0},s+\xi\right)-2l_{1}\xi\right]\cdot\mathrm{dist}_{1}\left(\x_{0},\y_{0}\right)\\
		&\ge m\left[\mathcal{M}\left(s+\xi\right)-2l_{1}\xi\right]\cdot\mathrm{dist}_{1}\left(\x_{0},\y_{0}\right).
		\end{split}
	\end{eqnarray*}
	
	We divide the proof into three types of situations. 
	
  	\textbf{Case 1}:  In this case we assume that $\lV\x-\y\rV_{2}\le\lV\x+\y\rV_{2}\le5\lV\x-\y\rV_{2}$.  We can let $\lV\x\rV_{2}\ge\lV\y\rV_{2}$, then by standardization, set $\x\in\mathcal{K}\cap\mathcal{S}^{n}$ and $\y\in \mathcal{K}\cap B_2^n$. Then we let $\mathcal{K}^{1}_{\delta },\mathcal{K}^{2}_{\delta }$ be the $\delta$-net of $\text{cone}(\mathcal{K})\cap \mathbb{S}^{n-1}$ and $\text{cone}(\mathcal{K})\cap B_2^n$. 
	Now for all $\x\in\mathcal{K}\cap\mathcal{S}^{n}$ and $\y\in \mathcal{K}\cap B_2^n$, there exist $\x_{0}\in\mathcal{K}^{1}_{\delta }$ and $\y_{0}\in\mathcal{K}^{2}_{\delta }$, such that $\lV\x-\x_{0}\rV_{2}\le\delta, \lV\y-\y_{0}\rV_{2}\le\delta$. 
		
	Set $\mS_{1}$ be the index set of the largest $s$-fraction of $\mS_{\pmb{x},\pmb{y}}=\{\lv\langle\pmb{a}_{i}, \pmb{x}\rangle\rv -\lv\langle\pmb{a}_{i}, \pmb{y}\rangle\rv | i\in[m] \}$ in absolute value.  
	Similarly, $\mS_{2}$ be the subset of $\mS_{\pmb{x}_{0},\pmb{y}_{0}}$,  which collects the index of the largest $s$-fraction of $\mS_{\pmb{x}_{0},\pmb{y}_{0}}$ in absolute value.
	Then, we have
	\begin{eqnarray*}
	\begin{split}
		\min_{\mS \subset[m],  |\mS|\leq s m}
		&\lV\lv\mA_{{\mS}^{c}}\x\rv-\lv\mA_{{\mS}^{c}}\y\rv\rV_{1}\\
		=&\lV\lv\mA_{{\mS}_{1}^{c}}\x\rv-\lv\mA_{{\mS}_{1}^{c}}\y\rv\rV_{1}
		=\sum_{i\in{\mS}_{1}^{c}} \bigl| \lv \langle\pmb{a}_{i}, \pmb{x}\rangle\rv -\lv\langle\pmb{a}_{i}, \pmb{y}\rangle \rv\bigr| \\
		\ge&\sum_{i\in{\mS}_{1}^{c}} \bigl| \lv \langle\pmb{a}_{i}, \pmb{x}_{0}\rangle\rv -\lv\langle\pmb{a}_{i}, \pmb{y}_{0}\rangle \rv\bigr| -
		\sum_{i\in{\mS}_{1}^{c}} \bigl|\lv \langle\pmb{a}_{i}, \pmb{x}\rangle\rv-\lv \langle\pmb{a}_{i}, \pmb{x}_{0}\rangle\rv -\lv\langle\pmb{a}_{i}, \pmb{y}\rangle\rv +\lv \langle\pmb{a}_{i}, \pmb{y}_{0}\rangle\rv \bigr| \\
		\ge&\sum_{i\in{\mS}_{2}^{c}} \bigl|\lv \langle\pmb{a}_{i}, \pmb{x}_{0}\rangle\rv -\lv\langle\pmb{a}_{i}, \pmb{y}_{0}\rangle \rv\bigr|-
		\lV\lv \mA_{{\mS}_{1}^{c}}\x\rv-\lv \mA_{{\mS}_{1}^{c}}\x_{0}\rv -\lv\mA_{{\mS}_{1}^{c}}\y\rv +\lv\mA_{{\mS}_{1}^{c}}\y_{0}\rv\rV_{1}\\
		=&	\min_{\mS \subset[m],  |\mS|\leq s m} \lV\lv\mA_{{\mS}^{c}}\x_{0}\rv-\lv\mA_{{\mS}^{c}}\y_{0}\rv\rV_{1}
		-\lV\lv \mA_{{\mS}_{1}^{c}}\x\rv-\lv \mA_{{\mS}_{1}^{c}}\x_{0}\rv -\lv\mA_{{\mS}_{1}^{c}}\y\rv +\lv\mA_{{\mS}_{1}^{c}}\y_{0}\rv\rV_{1}.\\
		\end{split}
	\end{eqnarray*}
	Similarly, we can get
	\begin{eqnarray*}
	\begin{split}
		&\max_{\mS \subset[m],  |\mS|\leq s m} \lV\lv\mA_{{\mS}}\x\rv-\lv\mA_{{\mS}}\y\rv\rV_{1}\\
		&\le\max_{\mS \subset[m],  |\mS|\leq s m} \lV\lv\mA_{{\mS}}\x_{0}\rv-\lv\mA_{{\mS}}\y_{0}\rv\rV_{1}
		+\lV\lv \mA_{{\mS}_{1}}\x\rv-\lv \mA_{{\mS}_{1}}\x_{0}\rv -\lv\mA_{{\mS}_{1}}\y\rv +\lv\mA_{{\mS}_{1}}\y_{0}\rv\rV_{1}.\\
		\end{split}
	\end{eqnarray*}
	
	We then present the following lemma. 
	\begin{lemma}\label{A4}
		For any $\pmb{z}_{1},\pmb{z}_{2}$ and 
		$\pmb{a}_{i}\stackrel{i.i.d}{\sim}\mathcal{N}\left(0,\pmb{I}_{n}\right)$, we have
		\begin{eqnarray}
			\mathbb{P}\left(\frac{1}{m}\lV\lv \mA\pmb{z}_{1}\rv-\lv \mA\pmb{z}_{2}\rv\rV_{1}\ge\sqrt{\frac{2}{\pi}}\left(1+\epsilon\right)\lV\pmb{z}_{1}-\pmb{z}_{2}\rV_{2}\right)\le2\exp\left(-c_{1}m\epsilon^{2}/K^{2}_{1}\right),
		\end{eqnarray}
		where $K_{1}=\sqrt{8/3}$ and $c_{1}$ is a positive numerical constant.
	\end{lemma}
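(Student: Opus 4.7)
The plan is to reduce the bivariate quantity $\bigl|\lv \langle \pmb{a}_i, \pmb{z}_1\rangle\rv - \lv \langle \pmb{a}_i, \pmb{z}_2\rangle\rv\bigr|$ to the single linear functional $\lv \langle \pmb{a}_i, \pmb{z}_1 - \pmb{z}_2\rangle\rv$ via the reverse triangle inequality, and then apply a standard sub-Gaussian Hoeffding bound. First, for each $i$ I would write
\begin{eqnarray*}
\bigl|\lv \langle \pmb{a}_i, \pmb{z}_1\rangle\rv - \lv \langle \pmb{a}_i, \pmb{z}_2\rangle\rv\bigr| \le \lv \langle \pmb{a}_i, \pmb{z}_1 - \pmb{z}_2\rangle\rv,
\end{eqnarray*}
so summing yields $\frac{1}{m}\lV \lv \mA \pmb{z}_1\rv - \lv \mA \pmb{z}_2\rv \rV_1 \le \frac{1}{m}\sum_{i=1}^m \lv \langle \pmb{a}_i, \pmb{z}_1 - \pmb{z}_2\rangle\rv$, and it suffices to control the right-hand side.

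Next, set $\pmb{w} = \pmb{z}_1 - \pmb{z}_2$ and normalize by letting $X_i := \lv \langle \pmb{a}_i, \pmb{w}\rangle\rv / \lV \pmb{w}\rV_2$. Then $X_i$ is the absolute value of a standard Gaussian (half-normal), so $\mathbb{E}[X_i] = \sqrt{2/\pi}$. A direct computation from the definition of the Orlicz norm, using $\mathbb{E}\exp(X_i^2/t^2) = 1/\sqrt{1-2/t^2}$, gives $\lV X_i \rV_{\psi_2} = \sqrt{8/3} = K_1$, which explains the constant in the statement. The centered variables $\widetilde{X}_i := X_i - \sqrt{2/\pi}$ are then independent, mean-zero, sub-Gaussian with $\lV \widetilde{X}_i\rV_{\psi_2} \lesssim K_1$.

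Finally, I would invoke the sub-Gaussian Hoeffding inequality (Theorem 2.6.2 in Vershynin): for independent mean-zero sub-Gaussian variables with $\psi_2$-norm bounded by $K_1$,
\begin{eqnarray*}
\mathbb{P}\left(\frac{1}{m}\sum_{i=1}^m \widetilde{X}_i \ge t\right) \le 2\exp\left(-\frac{c\, m t^2}{K_1^2}\right).
\end{eqnarray*}
Choosing $t = \sqrt{2/\pi}\,\epsilon$ and multiplying back by $\lV \pmb{w}\rV_2$ gives exactly the claimed bound, with the universal constants absorbed into $c_1$. The argument is entirely routine; the only subtlety is verifying that the relevant sub-Gaussian norm is $\sqrt{8/3}$, which matches the stated $K_1$ and makes the constant explicit. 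No obstacle of substance arises, since the reverse triangle inequality removes the nonlinearity at the outset.
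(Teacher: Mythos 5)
Your proposal is correct and follows essentially the same route as the paper's own proof: the reverse triangle inequality to reduce to $\lv\langle\pmb{a}_i,\pmb{z}_1-\pmb{z}_2\rangle\rv$, the half-normal mean $\sqrt{2/\pi}$, the sub-Gaussian norm $K_1=\sqrt{8/3}$, and Hoeffding's inequality from Vershynin. Your explicit verification that $\mathbb{E}\exp(X_i^2/t^2)=1/\sqrt{1-2/t^2}$ yields $K_1=\sqrt{8/3}$ is a detail the paper asserts without computation, but the argument is otherwise identical.
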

	\begin{proof}
		It can be seen that  $\big|\lv \langle\pmb{a},\pmb{z}_{1}\rangle\rv-\lv \langle\pmb{a},\pmb{z}_{2}\rangle\rv\big|\le\lv \langle\pmb{a},\pmb{z}_{1}- \pmb{z}_{2}\rangle)\rv$ and $\mathbb{E}\left[\lv \langle\pmb{a},\pmb{z}_{1}- \pmb{z}_{2}\rangle)\rv\right]=\sqrt{\frac{2}{\pi}}\lV\pmb{z}_{1}- \pmb{z}_{2}\rV_{2}$.				
Furthermore, random variable $\lv \langle\pmb{a},\pmb{z}/\lV\pmb{z}\rV_{2}\rangle\rv$ has sub-Guassian norm $\sqrt{8/3}$. Then by Hoeffding inequality in \cite{vershynin2018high}, we have reached the conclusion.
		\end{proof}
Jointing the above two inequalities and Lemma \ref{A4}, we obtain that
		\begin{eqnarray*} 
		\begin{split}
			&\min_{\mS \subset[m],|\mS|\leq s m} \lV\lv\mA_{{\mS}^{c}}\x\rv-\lv\mA_{{\mS}^{c}}\y\rv\rV_{1}-\lV\lv\mA_{{\mS}}\x\rv-\lv\mA_{{\mS}}\y\rv\rV_{1}\\
			&\ge\min_{\mS \subset[m],|\mS|\leq s m}\lV\lv\mA_{{\mS}^{c}}\x_{0}\rv-\lv\mA_{{\mS}^{c}}\y_{0}\rv\rV_{1}-\lV\lv\mA_{{\mS}}\x_{0}\rv-\lv\mA_{{\mS}}\y_{0}\rv\rV_{1}			-\lV\lv\mA\x\rv-\lv\mA\x_{0}\rv-\lv\mA\y\rv+\lv\mA\y_{0}\rv\rV_{1}\\	
			&\ge m\left[\mathcal{M}\left(s+\xi\right)-2l_{1}\xi\right]\cdot\mathrm{dist}\left(\x_{0},\y_{0}\right)-
			\lV\lv\mA\x\rv-\lv\mA\x_{0}\rv\rV_{1}-\lV\lv\mA\y\rv-\lv\mA\y_{0}\rv\rV_{1}\\		
			&\ge m\left[\mathcal{M}\left(s+\xi\right)-2l_{1}\xi\right]\cdot\left(\mathrm{dist}_{1}\left(\x,\y\right)-2\delta\right)-\sqrt{2/\pi}\left(1+\epsilon\right)\left(\lV\x-\x_{0}\rV_{2}+\lV\y-\y_{0}\rV_{2}\right)\\
			&\ge m\left[\mathcal{M}\left(s+\xi\right)-2l_{1}\xi\right]\cdot\mathrm{dist}_{1}\left(\x,\y\right)-\left[2\sqrt{2/\pi}\left(1+\epsilon\right)+2\mathcal{M}\left(0\right)\right]m\delta\\
			&\ge m\left[\mathcal{M}\left(s+\xi\right)-2l_{0}\xi\right]\cdot\mathrm{dist}_{1}\left(\x,\y\right).
			\end{split}
		\end{eqnarray*}
		In the forth line we use Lemma \ref{A4}  and the fact that $\lV\x_{0}-\y_{0}\rV_{2}\ge\lV\x-\y\rV_{2}-2\delta$.
		We can set $l_{0}\textgreater l_{1}$ such that $\left[2\sqrt{2/\pi}\left(1+\epsilon\right)+2\mathcal{M}\left(0\right)\right]\delta+l_{1}\xi\cdot\mathrm{dist}_{1}\left(\x,\y\right)=l_{0}\xi\cdot\mathrm{dist}_{1}\left(\x,\y\right)$ (thus $\delta\ge\tilde{c}\xi$) in the last step as $\mathrm{dist}_{1}\left(\x,\y\right)=\lV\x-\y\rV_{2}\ge\lV\x+\y\rV_{2}/5\ge1/5$.
			
\textbf{Case 2}:  We assume that $5\lV\x-\y\rV_{2}\le\lV\x+\y\rV_{2}$.  Due to $\x,\y\in\mathcal{K}$, we have $\x-\y\in\mathcal{K}-\mathcal{K}$. By standardization, we let $\x-\y\in\left(\mathcal{K}-\mathcal{K}\right)\cap\mathcal{S}^{n}$. Then let $\mathcal{K}^{-}_{\delta }$ be the $\delta$-net of $\text{cone}\left(\mathcal{K}-\mathcal{K}\right)\cap \mathbb{S}^{n-1}$. Now for all $\pmb{u}:=\x-\y\in\left(\mathcal{K}-\mathcal{K}\right)\cap\mathcal{S}^{n}$, their exist $\pmb{u}_{0}:=\x_{0}-\y_{0}\in\mathcal{K}^{-}_{\delta}$, such that $\lV\pmb{u}-\pmb{u}_{0}\rV_{2}\le\delta$. 

For $\rho=\langle\x/\lV\x\rV_{2},\y/\lV\y\rV_{2}\rangle$, set $\mathcal{K}^{\rho}_{\delta}$ be the $\delta$-net of $\rho\in\left[0,1\right]$. Thus $\mathcal{N}\left(\mathcal{K}^{\rho}_{\delta },\delta\right)\le\mathcal{N}\left(\mathcal{K}^{1}_{\delta },\delta\right)\times\mathcal{N}\left(\mathcal{K}^{2}_{\delta },\delta\right)$. 
Due to $\lV\x+\y\rV_{2}\ge5\lV\x-\y\rV_{2}$, we have $\rho\textgreater0$. There exists $\rho_{0}\in\mathcal{K}^{\rho}_{\delta }$, such that $\lv\rho-\rho_{0}\rv\le\delta$, then for small enough $\delta$, we have both $\rho,\rho_{0}\textgreater0$. Then we can get
\begin{eqnarray*} 
		\begin{split}
		\mathbb{E}\big|\lv\langle\pmb{a},\x\rangle\rv-\lv\langle\pmb{a},\x_{0}\rangle\rv-\lv\langle\pmb{a},\y\rangle\rv+\lv\langle\pmb{a},\y_{0}\rangle\rv\big|
		&=\mathbb{E}\big|\lv\langle\pmb{a},\x-\y\rangle\rv-\lv\langle\pmb{a},\x_{0}-\y_{0}\rangle\rv\big|\\
		&=\mathbb{E}\big|\lv\langle\pmb{a},\pmb{u}\rangle\rv-\lv\langle\pmb{a},\pmb{u}_{0}\rangle\rv\big|
		\le\mathbb{E} \lv\langle \pmb{a},\pmb{u}-\pmb{u}_{0}\rangle\rv \le \sqrt{\frac{2}{\pi}}\delta.
		\end{split}
		\end{eqnarray*}
		Due to Lemma \ref{A4}, we have the following concentration inequality
\begin{eqnarray}
			\mathbb{P}\left(\frac{1}{m}\lV\lv \mA\pmb{x}\rv-\lv \mA\pmb{y}\rv-\lv \mA\pmb{x}_{0}\rv+\lv \mA\pmb{y}_{0}\rv\rV_{1}\ge \sqrt{\frac{2}{\pi}}\delta\left(1+\epsilon\right)\right)\le2\exp\left(-c_{1}m\epsilon^{2}\right).
		\end{eqnarray}
 We then obtained that
		\begin{eqnarray*} 
		\begin{split}
			&\min_{\mS \subset[m],|\mS|\leq s m} \lV\lv\mA_{{\mS}^{c}}\x\rv-\lv\mA_{{\mS}^{c}}\y\rv\rV_{1}-\lV\lv\mA_{{\mS}}\x\rv-\lv\mA_{{\mS}}\y\rv\rV_{1}\\
			&\ge m\left[\mathcal{M}\left(s+\xi\right)-2l_{1}\xi\right]\cdot\mathrm{dist}_{1}\left(\x_{0},\y_{0}\right)-
			\lV\lv\mA\x\rv-\lv\mA\x_{0}\rv-\lv\mA\y\rv+\lv\mA\y_{0}\rv\rV_{1}\\		
			&\ge m\left[\mathcal{M}\left(s+\xi\right)-2l_{1}\xi\right]\cdot\left(\mathrm{dist}_{1}\left(\x,\y\right)-\delta\right)-\sqrt{\frac{2}{\pi}}\delta\left(1+\epsilon\right)m\cdot\mathrm{dist}_{1}\left(\x,\y\right)\\
			&\ge m\left[\mathcal{M}\left(s+\xi\right)-2l_{1}\xi\right]\cdot\mathrm{dist}_{1}\left(\x,\y\right)-\left[\sqrt{\frac{2}{\pi}}\left(1+\epsilon\right)+\mathcal{M}\left(0\right)\right]m\delta\cdot\mathrm{dist}_{1}\left(\x,\y\right)		\\
			&= m\left[\mathcal{M}\left(s+\xi\right)-2l_{0}\xi\right]\cdot\mathrm{dist}_{1}\left(\x,\y\right).
			\end{split}
		\end{eqnarray*}
				
  	\textbf{Case 3}:  We assume that $\lV\x+\y\rV_{2}\le\lV\x-\y\rV_{2}$.  Then $\x+\y\in\mathcal{K}+\mathcal{K}$, we let $\x+\y\in\left(\mathcal{K}+\mathcal{K}\right)\cap\mathcal{S}^{n}$. Then let $\mathcal{K}^{+}_{\delta }$ be the $\delta$-net of $\text{cone}\left(\mathcal{K}+\mathcal{K}\right)\cap \mathbb{S}^{n-1}$. The remain classification and arguments are similar to \textbf{Case 1} and \textbf{Case 2}.

By properties $w\left(\mathcal{K}+\mathcal{K}\right)=2w\left(\mathcal{K}\right), w\left(\mathcal{K}-\mathcal{K}\right)=\frac{1}{2}w\left(\mathcal{K}\right)$, we have 
	\begin{equation*}
	\begin{split}
	&\max\{C_{1}w^2\left(\text{cone}(\mathcal{K})\cap B_2^n\right),C_{2}w^{2}\left(\text{cone}\left(\mathcal{K}-\mathcal{K}\right)\cap\mathbb{S}^{n-1}\right),C_{3}w^{2}\left(\text{cone}\left(\mathcal{K}+\mathcal{K}\right)\cap\mathbb{S}^{n-1}\right)\}\\
	&=C_{4}w^{2}\left(\text{cone}\left(\mathcal{K}\right)\cap\mathbb{S}^{n-1}\right).
	\end{split}
\end{equation*}
	Then by Sudakov’s minoration inequality in (\ref{Sudakov}), for any $\delta\textgreater0$,
	we can get 
	\begin{equation*}
	\begin{split}
	&\max\{\log\mathcal{N}\left(\mathcal{K}^{1}_{\delta },\delta\right),\log\mathcal{N}\left(\mathcal{K}^{2}_{\delta },\delta\right),\log\mathcal{N}\left(\mathcal{K}^{-}_{\delta },\delta\right),\log\mathcal{N}\left(\mathcal{K}^{+} 
	_{\delta },\delta\right)\}\\
&\le C_{4}w^{2}\left(\text{cone}\left(\mathcal{K}\right)\cap\mathbb{S}^{n-1}\right)/\delta^{2}.
\end{split}
	\end{equation*}

		Finally, we can finish the proof with probability at least 
		\begin{eqnarray*} 
			&&1-
			C_{5}\left(\mathcal{N}\left(\mathcal{K}^{1}_{\delta },\delta\right)\times\mathcal{N}\left(\mathcal{K}^{2}_{\delta },\delta\right)\times\mathcal{N}\left(\mathcal{K}^{-}_{\delta },\delta\right)\times\mathcal{N}\left(\mathcal{K}^{+}_{\delta },\delta\right)\right)\left(\mathcal{O}\left(e^{-cm\xi^{2}}\right)+2e^{-c_{1}m\delta^{2}}\right)\\
			&\ge&1-C_{6}\exp\left[w^{2}\left(\text{cone}\left(\mathcal{K}\right)\cap\mathbb{S}^{n-1}\right)/\delta^{2}\right]\left(\mathcal{O}\left(e^{-cm\xi^{2}}\right)+2e^{-c_{1}m\delta^{2}}\right)\\
			&\ge&1-C_{7}\exp\left[w^{2}\left(\text{cone}\left(\mathcal{K}\right)\cap\mathbb{S}^{n-1}\right)/\xi^{2}\right]\cdot\mathcal{O}\left(e^{-c_{2}m\xi^{2}}\right)
			=1-\mathcal{O}\left(e^{-c_{0}m\xi^{2}}\right),
		\end{eqnarray*}
		as we have provided $m \gtrsim \xi ^{-4}\cdot w^{2}\left(\text{cone}(\mathcal{K})\cap\mathbb{S}^{n-1}\right)$.
		
	We then present the proof of Corollary \ref{coro1}. We first set $s=0$, then we have 
		\begin{eqnarray*} 
			\frac{1}{m}\lV \lv\mA \x\rv -\lv\mA \y\rv \rV_1\ge \left[\mathcal{M}\left(\xi\right)-l_{0}\xi\right]	\cdot\mathrm{dist}_{1}\left(\x,\y\right).
		\end{eqnarray*}
		By Proposition \ref{K}, 
		we can assume $\mathcal{M}\left(s\right)$ has Lipschitz constant $L$. 
		Thus $\lv\mathcal{M}\left(0\right)-\mathcal{M}\left(\xi\right)\rv\le L\xi$ and we set $\tilde{l}=L+l_{0}$ to get the left-hand inequality of (\ref{co1}).
	We set $s=1-\xi$ to get
	\begin{eqnarray*} 
			\frac{1}{m}\lV \lv\mA \x\rv -\lv\mA \y\rv \rV_1&\le& \left[-\mathcal{M}\left(1\right)+l_{0}\xi\right]\cdot\mathrm{dist}_{1}\left(\x,\y\right)+
			\frac{2}{m}\min_{\left\{\mS \subset\left[m\right],  |\mS|\leq \left(1-\xi\right)m\right\} }
			\lV \lv\mA_{{\mS}^{c}} \x\rv -\lv\mA_{{\mS}^{c}} \y\rv \rV_1\\
			&\le&\left(\sqrt{\frac{2}{\pi}}+l_{0}\xi\right)\cdot\mathrm{dist}_{1}\left(\x,\y\right)+
			\frac{2}{m}\xi
			\lV \lv\mA\x\rv -\lv\mA \y\rv \rV_1\\
			&\le&\left(\sqrt{\frac{2}{\pi}}+l_{0}\xi\right)\cdot\mathrm{dist}_{1}\left(\x,\y\right)+
			4\xi\lV\x-\y\rV_{2}	\le	\left[\sqrt{\frac{2}{\pi}}+\left(l_{0}+4\right)\xi\right]\cdot\mathrm{dist}_{1}\left(\x,\y\right).
			\end{eqnarray*}
In the last second inequality we use Lemma \ref{A4}. Actually the right-hand inequality of (\ref{co1}) can be obtained concisely using Lemma \ref{A4}, but we omit it here.
	
		\subsection{Nonlinear ROB Condition for Intensity Measurement}\label{nonlinear_2}
		It can be checked that 
		\begin{eqnarray*}
			\frac{\lV\lv\mA\pmb{x}\rv^{2}-\lv\mA\pmb{y}\rv^{2}\rV_{1}}{\mathrm{dist}_{2}\left(\x,\y\right)}
			&=& \sum_{i=1}^{m}\lvert \langle\pmb{a}_{i}, \frac{\x-\y}{\lV\x-\y\rV_{2}}\rangle \cdot\langle\pmb{a}_{i}, \frac{\x+\y}{\lV\x+\y\rV_{2}}\rangle \rvert\\
			&=& \sum_{i=1}^{m}\lvert \langle\pmb{a}_{i}, \pmb{u}\rangle  \cdot\langle\pmb{a}_{i}, \pmb{v}\rangle \rvert :=\sum_{i=1}^{m}\lvert X\cdot Y\rvert,
		\end{eqnarray*}	
		where we set $\pmb{u}=\frac{\x-\y}{\lV\x-\y\rV_{2}}\in\mathbb{S}^{n-1},\pmb{v}=\frac{\x+\y}{\lV\x+\y\rV_{2}}\in\mathbb{S}^{n-1}$ and $X=\langle\pmb{a}_{i}, \pmb{u}\rangle\sim \mathcal{N}\left(0,1\right), Y=\langle\pmb{a}_{i}, \pmb{v}\rangle\sim \mathcal{N}\left(0,1\right)$.
		The correlation coefficient between random variables $X$ and $Y$ is 
		$\rho:=\langle\pmb{u}, \pmb{v}\rangle$,
		then by Lemma 2 in \cite{huang2022outlier}, the PDF of $\left|Z_{\rho}\right|:=\left|X\cdot Y\right|$ is
		\begin{equation} \label{|xy|}
			f_{\rho}\left(z\right) = 
			\begin{cases} 
				\frac{1}{\sqrt{2\pi z}}e^{-z/2} &\rho=\pm 1 \\ 
				\frac{2}{\pi\sqrt{1-\rho^2}} \cosh \left(\frac{\rho z}{1-\rho^2}\right) K_0\left( \frac{z}{1-\rho^2} \right) &-1<\rho<1,
			\end{cases} 
		\end{equation}
		where $K_0(\cdot)$ denotes the modified Bessel function of the second kind of order zero. For completeness, we attach the proof of (\ref{|xy|}) to Appendix \ref{RV}. 
		Then we set the CDF of $|Z_{\rho}|$ as
		\begin{eqnarray*}
			F_{\rho}(t) = \int_{0}^{t} f_{\rho}(z) dz.
		\end{eqnarray*}
		And we define the balance function for $\lv Z_{\rho}\rv$:
		\begin{eqnarray*}
			J\left(\rho, s\right) = J_{1}\left(\rho, s\right)-J_{2}\left(\rho, s\right):=\left[ \int_0^{F_{\rho}^{-1}\left(1-s\right)} - \int_{F_{\rho}^{-1}\left(1-s\right)}^{+\infty} \right] z f_{\rho}(z)dz.
		\end{eqnarray*}
		We can only consider $\rho\ge 0$, thus the minimum balance function here is
		\begin{eqnarray}
			\mathcal{J}\left(s\right)=\min_{\rho \in\left[0,1\right]}\left[\int_{0 }^{F_{\rho}^{-1}\left(1-s\right)}- \int_{F_{\rho}^{-1}\left(1-s\right) }^{+\infty} \right]z f_{\rho }\left(z\right)dz.
		\end{eqnarray}
		Now we present the following theorem.
	\begin{theorem}\label{k=2}
		Let 
		$\mA\x=\{\langle\pmb{a}_{1},\x\rangle,\cdots,\langle\pmb{a}_{m},\x\rangle\}$ where $\pmb{a}_{i}\stackrel{i.i.d}{\sim}\mathcal{N}\left(0,\pmb{I}_{n}\right)$.
		Assume that $0 \textless s\textless s^{*,2}\approx0.1185$ and $c_0$ is a positive numerical constant corresponding to $l_{0}$. 
		If
		\begin{eqnarray*}
			m \gtrsim \xi ^{-4}\cdot w^2\left(\text{cone}\left(\mathcal{K}\right)\cap \mathbb{S}^{n-1}\right),
		\end{eqnarray*}
		then for any $s<s^{*,2}$,  $\mA$ satisfies the $s$-nonlinear ROB condition for all $\x,\y\in \mathcal{K}$:
		\begin{equation}
		\begin{split}
			\frac{1}{m}\min_{\left\{\mS \subset\left[m\right],  |\mS|\leq s m\right\} }
			\Bigl[\lV \lv\mA_{{\mS}^{c}} \x\rv^{2} -\lv\mA_{{\mS}^{c}} \y\rv^{2} \rV_1&-\lV \lv\mA_{\mS} \x\rv^{2} -\lv\mA_{\mS} \y\rv^{2} \rV_1 \Bigr]\\
			&\ge\left[\mathcal{J}\left(s+\xi \right)-l_{0}\xi\right]\cdot\mathrm{dist}_{2}\left(\x,\y\right)
			\end{split}
		\end{equation}
		with probability at least $1- \mathcal{O}\left(e^{-c_{0}m\xi^{2}}\right)$.
	\end{theorem}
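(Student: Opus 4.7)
The plan is to mirror the three-step strategy that proved Theorem \ref{k=1}, making adjustments to account for two structural differences: the relevant one-dimensional random variable $|Z_\rho|=|XY|$ is sub-exponential (a product of two Gaussians) rather than sub-Gaussian, and the natural distance $\mathrm{dist}_{2}$ is multiplicative in $\|\pmb{x}-\pmb{y}\|_2$ and $\|\pmb{x}+\pmb{y}\|_2$ rather than additive. Throughout I will use the identity
\begin{eqnarray*}
\bigl|\lv\mA\x\rv^{2}-\lv\mA\y\rv^{2}\bigr|
= \lv\mA\left(\x-\y\right)\rv\cdot\lv\mA\left(\x+\y\right)\rv,
\end{eqnarray*}
which writes each coordinate as a product with independent marginal distribution $\mathcal{N}(0,1)\otimes\mathcal{N}(0,1)$ and correlation $\rho=\langle\pmb u,\pmb v\rangle$, for $\pmb u=\tfrac{\x-\y}{\|\x-\y\|_2}$, $\pmb v=\tfrac{\x+\y}{\|\x+\y\|_2}$.

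\textbf{Step 1: Properties of $\mathcal{J}(s)$.} I would prove an analog of Proposition \ref{K}: $\mathcal{J}$ is strictly decreasing (same minimax argument as in Proposition \ref{K}(a)), continuous, with endpoint values obtained from the closed form $\mathbb{E}|XY|=\tfrac{2}{\pi}\bigl(\sqrt{1-\rho^{2}}+\rho\arcsin\rho\bigr)$ applied to the PDF in (\ref{|xy|}); minimising and maximising over $\rho\in[0,1]$ yields $\mathcal{J}(0)$ and $\mathcal{J}(1)$ explicitly. Monotonicity plus boundedness gives a unique root, and computer-assisted evaluation locates it at $s^{*,2}\approx 0.1185$.

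\textbf{Step 2: Fixed-vector estimate.} For fixed $\x,\y$, I would apply the same truncation $\Gamma(t)=h_k(|Z_\rho|)/\mathrm{dist}_2(\x,\y)^{1/2}\cdot\mathrm{dist}_2(\x,\y)^{1/2}$ strategy as in Lemma \ref{A}, but measure the Orlicz norm in $\psi_1$ rather than $\psi_2$. The product $XY$ satisfies $\|XY\|_{\psi_1}\le\|X\|_{\psi_2}\|Y\|_{\psi_2}$, uniformly over $\rho\in[-1,1]$, so the truncated variables $x\mapsto x\mathbbm{1}_{[0,t]}(x)$ and $x\mapsto x\mathbbm{1}_{[t,\infty)}(x)$ have uniformly bounded $\psi_1$-norm by a continuity/compactness argument identical in form to the one in Lemma \ref{A}. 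Bernstein's inequality then gives the concentration of the empirical mean around $J_1(\rho,s+\xi)$ and $J_2(\rho,s+\xi)$ at rate $e^{-cm\xi^2}$, and combining this with the Dvoretzky-Kiefer-Wolfowitz inequality (Lemma \ref{DKW}) turns the concentration into control of the smallest $(1-s)m$ and largest $sm$ order statistics, yielding the fixed-vector version of the theorem.

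\textbf{Step 3: Uniform argument.} Since the relevant unit vectors are $\pmb u\in\text{cone}(\mathcal{K}-\mathcal{K})\cap\mathbb{S}^{n-1}$ and $\pmb v\in\text{cone}(\mathcal{K}+\mathcal{K})\cap\mathbb{S}^{n-1}$, I would take $\delta$-nets of both spheres together with a $\delta$-net of $\rho\in[0,1]$, exactly as in Cases 1--3 of the Theorem \ref{k=1} proof. The ingredient replacing Lemma \ref{A4} is the perturbation estimate
\begin{eqnarray*}
\frac{1}{m}\bigl\lV\lv\mA\x\rv^{2}-\lv\mA\y\rv^{2}-\lv\mA\x_0\rv^{2}+\lv\mA\y_0\rv^{2}\bigr\rV_{1}\lesssim (1+\epsilon)\,\delta\cdot\mathrm{dist}_{2}(\x,\y),
\end{eqnarray*}
with probability $1-2e^{-c_1 m\epsilon^2}$. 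This follows by telescoping the product
$|\mA(\x-\y)|\cdot|\mA(\x+\y)|-|\mA(\x_0-\y_0)|\cdot|\mA(\x_0+\y_0)|$ and applying Bernstein to each sub-exponential term $|\langle\pmb a_i,\pmb u-\pmb u_0\rangle\langle\pmb a_i,\pmb v\rangle|$ (and symmetrically). Sudakov's minoration gives $\log\mathcal N(\mathcal K^{\pm}_\delta,\delta)\lesssim w^2(\text{cone}(\mathcal K)\cap\mathbb S^{n-1})/\delta^2$ (using $w(\mathcal K\pm\mathcal K)\asymp w(\mathcal K)$), and the union bound with $\delta\asymp\xi$ gives the declared probability $1-\mathcal O(e^{-c_0 m\xi^2})$ under $m\gtrsim\xi^{-4}w^2(\text{cone}(\mathcal K)\cap\mathbb S^{n-1})$.

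The main obstacle will be Step 3. Unlike the amplitude setting, the perturbation of $|\mA\x|^2-|\mA\y|^2$ is a sum of sub-exponential products, so Bernstein rather than Hoeffding must be used, and one must be careful that the multiplicative scaling by $\mathrm{dist}_2(\x,\y)=\|\x-\y\|_2\|\x+\y\|_2$ is absorbed cleanly into the net-radius term. In particular, the analogue of Cases 1--3 must be rechecked: when $\|\x+\y\|_2$ and $\|\x-\y\|_2$ are highly unbalanced one factor is tiny and the other is large, and the discretisation error must remain below $[\mathcal J(s+\xi)-\ell_0\xi]\cdot\mathrm{dist}_2(\x,\y)$; ensuring this uniformly across the regimes, while simultaneously keeping the probability at $e^{-c_0 m\xi^2}$ despite only $\psi_1$-tails, is the delicate part of the argument.
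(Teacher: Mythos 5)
Your proposal is correct in substance and follows the paper's three-step skeleton: Step 1 (properties and unique zero of $\mathcal{J}$ via $\mathbb{E}|XY|=\tfrac{2}{\pi}(\sqrt{1-\rho^2}+\rho\arcsin\rho)$) and Step 2 (truncation, uniform $\psi_1$-bound, Bernstein, Dvoretzky--Kiefer--Wolfowitz) match the paper's Proposition \ref{J} and Lemma \ref{B} essentially verbatim. The one place you diverge is Step 3, and there your route is more complicated than it needs to be. You plan to replicate the Cases 1--3 decomposition from the amplitude proof and you single out the regime where $\lV\x-\y\rV_2$ and $\lV\x+\y\rV_2$ are badly unbalanced as the delicate point. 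But the case analysis in the amplitude proof exists only because $\mathrm{dist}_1(\x,\y)=\min\{\lV\x-\y\rV_2,\lV\x+\y\rV_2\}$ cancels just one of the two scales; in the intensity model, dividing by $\mathrm{dist}_2(\x,\y)=\lV\x-\y\rV_2\cdot\lV\x+\y\rV_2$ cancels \emph{both}, so the normalized per-coordinate quantity is exactly $\lvert\langle\pmb{a}_i,\pmb{u}\rangle\langle\pmb{a}_i,\pmb{v}\rangle\rvert$ with $\pmb{u}\in(\mathcal{K}-\mathcal{K})\cap\mathbb{S}^{n-1}$ and $\pmb{v}\in(\mathcal{K}+\mathcal{K})\cap\mathbb{S}^{n-1}$ both unit vectors. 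The paper's proof therefore uses a single uniform argument with $\delta$-nets of these two spheres, no case split and no separate net over $\rho$ (which is determined by $\pmb{u},\pmb{v}$), and controls the discretization error by telescoping $\pmb{u}\pmb{v}^{*}-\pmb{u}_0\pmb{v}_0^{*}=(\pmb{u}-\pmb{u}_0)\pmb{v}^{*}+\pmb{u}_0(\pmb{v}-\pmb{v}_0)^{*}$ and applying Bernstein to each sub-exponential term (Lemma \ref{B4}) --- precisely your perturbation estimate, but applied after normalization, which makes the ``unbalanced factors'' worry evaporate: the error is $O(\delta)$ in the normalized quantity and multiplying back by $\mathrm{dist}_2(\x,\y)$ is harmless. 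So your approach would work, but the obstacle you flag as the hard part is dispatched immediately by normalizing first; that is the main payoff of the multiplicative distance $\mathrm{dist}_2$.
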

	\begin{remark}
	The distinction between nonlinear ROB condition and ROB condition in intensity measurement, as discussed in \cite{huang2022outlier}, lies in the specific ranges of consideration. The former limits the consideration range to $\pmb{u}=\frac{\x-\y}{\lV\x-\y\rV_{2}},\pmb{v}=\frac{\x+\y}{\lV\x+\y\rV_{2}}\in\mathbb{S}^{n-1}$, while the latter considers the tangent space $T=\{ \x\y^{*}+\y\x^{*}:\x,\y\in\mathbb{R}^{n}\}$. However, both conditions ultimately lead to the same threshold value of 0.1185.
		\end{remark}
		Similar to the Corollary \ref{coro1}, if we set $s=0$ and $s=1-\xi$ in the nonlinear ROB condition for intensity measurement respectively, wo have the following corollary.
	\begin{corollary}\label{coro2}
		When $\mathcal{K}=\mathbb{R}^{n}$ and $m \gtrsim \xi ^{-4}\cdot n$  
		 with probability at least $1- \mathcal{O}\left(e^{-c_{0}m\xi^{2}}\right)$, we have 
		\begin{eqnarray*}
					\left(\frac{2}{\pi}-\tilde{l}\xi\right)
			\cdot\mathrm{dist}_{2}\left(\x,\y\right)
			\le\frac{1}{m}\lV \lv\mA \x\rv^{2} -\lv\mA \y\rv^{2} \rV_1
			\le \left(  1+   \tilde{l}\xi\   \right)\cdot\mathrm{dist}_{2}\left(\x,\y\right).
		\end{eqnarray*}
		\end{corollary}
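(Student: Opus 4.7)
The plan is to mirror almost verbatim the derivation of Corollary \ref{coro1} from Theorem \ref{k=1}, now invoking Theorem \ref{k=2} at the two endpoints $s = 0$ and $s = 1 - \xi$, after first identifying the endpoint values $\mathcal{J}(0)$ and $\mathcal{J}(1)$ of the minimum balance function for intensity measurement.

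For the lower bound, I would set $s = 0$ in Theorem \ref{k=2} to obtain $\frac{1}{m}\lV \lv \mA \x\rv^{2}-\lv \mA \y\rv^{2} \rV_{1} \ge [\mathcal{J}(\xi) - l_{0}\xi] \cdot \mathrm{dist}_{2}(\x,\y)$, and then evaluate $\mathcal{J}(0) = \min_{\rho \in [0,1]} \mathbb{E}\lv X\cdot Y\rv$ where $(X,Y)$ is standard bivariate normal with correlation $\rho$. The classical identity $\mathbb{E}\lv XY\rv = \tfrac{2}{\pi}\bigl(\sqrt{1-\rho^{2}} + \rho\arcsin \rho\bigr)$ is strictly increasing on $[0,1]$, so the minimum is attained at $\rho = 0$ and equals $\tfrac{2}{\pi}$. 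This matches the corollary. I would then establish an analogue of Proposition \ref{K} for $\mathcal{J}$ (strict monotonicity on $[0,1]$, $\mathcal{J}(0) = \tfrac{2}{\pi}$, $\mathcal{J}(1) = -1$, continuity, and a Lipschitz constant $L$), using the PDF $f_{\rho}$ from (\ref{|xy|}) and the same endpoint-balance computation via $\lv XY\rv = \tfrac{1}{2}\bigl[(X+Y)^{2} + (X-Y)^{2} - \cdots\bigr]$ identities. Then $\lv \mathcal{J}(0) - \mathcal{J}(\xi)\rv \le L\xi$, and absorbing this into the constant as $\tilde{l} = L + l_{0}$ gives the lower bound of Corollary \ref{coro2}.

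For the upper bound, I would set $s = 1-\xi$ in Theorem \ref{k=2} and rearrange: writing $\lV\cdot\rV_{1} = \lV\cdot_{\mS^{c}}\rV_{1} + \lV\cdot_{\mS}\rV_{1} = (\lV\cdot_{\mS^{c}}\rV_{1} - \lV\cdot_{\mS}\rV_{1}) + 2\lV\cdot_{\mS}\rV_{1}$, applying the nonlinear ROB condition to the first piece (using $\mathcal{J}(1) = -1$), and using the fact that the residual $\frac{2}{m}\min_{\lv\mS\rv \le (1-\xi)m}\lV\lv\mA_{\mS^{c}}\x\rv^{2} - \lv\mA_{\mS^{c}}\y\rv^{2}\rV_{1}$ equals twice the average of the smallest $\xi m$ terms and hence is at most $\frac{2\xi}{m}\lV\lv\mA\x\rv^{2} - \lv\mA\y\rv^{2}\rV_{1}$. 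To close the loop I need an intensity-analogue of Lemma \ref{A4}, namely
\begin{equation*}
\mathbb{P}\!\left(\tfrac{1}{m}\lV \lv\mA \x\rv^{2} - \lv\mA \y\rv^{2}\rV_{1} \ge (1+\epsilon)\,\mathrm{dist}_{2}(\x,\y)\right) \le 2\exp(-c\,m\min\{\epsilon,\epsilon^{2}\}),
\end{equation*}
obtained from $\mathbb{E}\lv\langle\pmb{a},\x-\y\rangle\langle\pmb{a},\x+\y\rangle\rv \le \lV\x-\y\rV_{2}\cdot\lV\x+\y\rV_{2} = \mathrm{dist}_{2}(\x,\y)$ by Cauchy--Schwarz, combined with Bernstein's inequality since the summands are sub-exponential (a product of two sub-Gaussians). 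Substituting this bound into the residual term and absorbing the $\xi$-multiplier into a larger $\tilde{l}$ yields the upper bound.

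The main obstacle I anticipate is establishing the intensity analogue of Lemma \ref{A4} cleanly with constant $1$ and the correct (sub-exponential, Bernstein-type) tail: unlike the amplitude case where $\lv\lv X\rv - \lv Y\rv\rv$ is bounded by $\lv X - Y\rv$ and is sub-Gaussian, here $\lv X^{2} - Y^{2}\rv = \lv X-Y\rv\cdot\lv X+Y\rv$ is only sub-exponential, so Hoeffding's inequality is unavailable. A secondary subtlety is verifying Lipschitz continuity of $\mathcal{J}$ near $s = 0$ uniformly in $\rho$, since $f_{\rho}$ is expressed through the Bessel function $K_{0}$ which has a logarithmic singularity at the origin and becomes degenerate as $\rho \to 1$; this needs to be handled by splitting the $\rho$-range into $[0, 1-\eta]$ (where all quantities are smooth) and $[1-\eta, 1]$ (where $\lv Z_{\rho}\rv$ concentrates near $X^{2}$ and one can argue by continuity of $\rho \mapsto \mathcal{J}$ at $\rho = 1$).
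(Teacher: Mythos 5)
Your proposal is correct and takes essentially the same route as the paper, which omits the proof precisely because it parallels Corollary \ref{coro1}. The two ingredients you flag as obstacles are already in the paper: the endpoint values $\mathcal{J}\left(0\right)=\frac{2}{\pi}$ and $\mathcal{J}\left(1\right)=-1$ are computed in the proof of Proposition \ref{J}, and the intensity analogue of Lemma \ref{A4} is exactly Lemma \ref{B4}, proved via Bernstein's inequality for the sub-exponential product $U\cdot V$ with $\mathbb{E}\lv U\cdot V\rv\le1$, just as you anticipate.
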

	Therefore, our nonlinear ROB condition can be seen as a version with outliers in the stability of phase retrieval in \cite{eldar2014phase}. 

	\subsection{Proof of Theorem \ref{k=2} and Corollary \ref{coro2}}
	In a manner analogous to the demonstration of Theorem \ref{k=1}, the proof of Theorem 6 will be partitioned into three steps.
	For simplicity and convenience of reading, we attach the proofs of \textbf{Step 1} and \textbf{Step 2} to Appendix \ref{partial} and 
	due to the similarity between the proof of Corollary \ref{coro2} and Corollary \ref{coro1}, we omit it.
	
	{\bf Step 1: Properties of $\mathcal{J}\left(s\right)$.}
This step provides that $\mathcal{J}\left(s\right)$ is a well-defined function and has a unique zero $s^{*,2}\approx0.1185$.
	\begin{proposition}\label{J}
		The minimum balance function $\mathcal{J}\left(s\right)$ satisfies:
		\begin{itemize}
			\item[$\mathrm{(a)}$]$\mathcal{J}\left(s\right)$ is a strictly monotonically decreasing continuous function on interval $\left[0,1\right]$;
			\item[$\mathrm{(b)}$]$\mathcal{J}\left(s\right)$ has a unique zero $s^{*,2}$, that is $\mathcal{J}\left(s^{*,2}\right)= 0$. 
		\end{itemize}
	\end{proposition}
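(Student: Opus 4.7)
The plan is to mirror the three-step structure of Proposition \ref{K}, but collapse it into the two parts actually claimed, working with the density $f_{\rho}(z)$ from \eqref{|xy|} rather than $g_{\rho,\alpha}(z)$. The only structural difference is that here the optimization is univariate (over $\rho \in [0,1]$ by symmetry) instead of over $(\rho,\alpha)\in[0,1]^2$, which actually simplifies matters. Throughout I assume the well-definedness of $F_{\rho}^{-1}(1-s)$, which holds because $f_{\rho}(z)$ is strictly positive on $(0,\infty)$ for every $\rho\in[0,1]$ (from \eqref{|xy|}, via the product of positive factors, using $K_0(t)>0$ for $t>0$ and $\cosh\ge 1$).

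For (a), strict monotonicity is the standard envelope argument used in the proof of Proposition \ref{K}(a). Fix $0\le s_1<s_2\le 1$ and pick optimizers $\rho_1,\rho_2$ achieving $\mathcal{J}(s_1)$ and $\mathcal{J}(s_2)$. Since $F_{\rho}^{-1}(1-s)$ is strictly decreasing in $s$ on $[0,1]$ for each fixed $\rho$, and $zf_{\rho}(z)>0$ on $(0,\infty)$, the map $s\mapsto J(\rho,s)$ is itself strictly decreasing for every $\rho$. Therefore
\begin{equation*}
\mathcal{J}(s_1)=J(\rho_1,s_1)>J(\rho_1,s_2)\ge J(\rho_2,s_2)=\mathcal{J}(s_2).
\end{equation*}
For continuity, I would argue that $(\rho,s)\mapsto J(\rho,s)$ is jointly continuous on the compact set $[0,1]\times[0,1]$: the density $f_{\rho}$ depends continuously on $\rho$ on $[0,1)$ by inspection of the Bessel-function formula, the limit $\rho\to 1^-$ matches the degenerate case $\rho=\pm1$ by dominated convergence (use the integrable bound $z\cdot f_{\rho}(z)$ controlled uniformly by the second moment $\mathbb{E}[X^2Y^2]=1+2\rho^2\le 3$), and $s\mapsto F_{\rho}^{-1}(1-s)$ is continuous since the CDF is strictly increasing. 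A parametric-minimum argument then gives continuity of $\mathcal{J}(s)=\min_{\rho}J(\rho,s)$ on $[0,1]$.

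For (b), it suffices to establish the sign change $\mathcal{J}(0)>0$ and $\mathcal{J}(1)<0$, after which strict monotonicity and continuity from (a) yield a unique zero by the intermediate value theorem. At $s=0$, we have $\mathcal{J}(0)=\min_{\rho}\mathbb{E}\lv XY\rv$, which is strictly positive because $\lv XY\rv$ has a positive density on $(0,\infty)$ for every $\rho\in[0,1]$. At $s=1$, the first integral in the definition of $J(\rho,s)$ vanishes while the second equals $\mathbb{E}\lv XY\rv>0$, so $\mathcal{J}(1)=-\max_{\rho}\mathbb{E}\lv XY\rv<0$. The numerical value $s^{*,2}\approx 0.1185$ is obtained from computer-assisted evaluation of the inner minimization over $\rho$ and the corresponding root of $\mathcal{J}$, exactly analogous to the computation yielding $s^{*,1}\approx 0.2043$ after Proposition \ref{K}. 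The main technical obstacle is the continuity step at the boundary $\rho=1$, where the density formula switches form; handling this cleanly requires a dominated-convergence argument on the integral defining $J(\rho,s)$, together with the observation that $F_{\rho}^{-1}(1-s)$ remains continuous in the limit (which can be verified using uniform convergence of $F_{\rho}$ on $[0,+\infty]$).
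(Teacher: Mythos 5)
Your proposal is correct and follows essentially the same route as the paper: the identical envelope argument $\mathcal{J}(s_1)=J(\rho_1,s_1)>J(\rho_1,s_2)\ge J(\rho_2,s_2)=\mathcal{J}(s_2)$ for strict monotonicity, followed by a sign change at the endpoints and the intermediate value theorem for the unique zero. The only differences are cosmetic: the paper computes the exact values $\mathcal{J}(0)=\frac{2}{\pi}$ and $\mathcal{J}(1)=-1$ from the closed-form $\mathbb{E}\lv Z_{\rho}\rv=\frac{2}{\pi}\bigl[\sqrt{1-\rho^{2}}+\rho\arcsin(\rho)\bigr]$ (values it reuses in Corollary \ref{coro2}), where you settle for the qualitative signs, and you spell out the continuity argument (including the $\rho\to 1$ boundary via dominated convergence) that the paper dismisses as obvious.
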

	
		{\bf Step 2: Fixed vectors description.} 
	\begin{lemma}\label{B}
		Fix $\x,\y\in \mathcal{K}$ and let $\rho=\langle\frac{\x-\y}{\lV\x-\y\rV_{2}},\frac{\x+\y}{\lV\x+\y\rV_{2}}\rangle$. 
		For $0 \textless s \textless s^{*,2}$, there exists a positive numerical constant $c_{1}$ corresponding to $l_{1}$ such that with probability at least $1-\mathcal{O}\left(e^{-c_{1}m\xi^{2}}\right)$,	
		\begin{eqnarray}
			\min_{\mS \subset[m],  |\mS|\leq s m}\frac{\lV\lv\mA_{{\mS}^{c}}\pmb{x}\rv^{2}-\lv\mA_{{\mS}^{c}}\pmb{y}\rv^{2}\rV_{1}}{\mathrm{dist}_{2}\left(\x,\y\right)}
			&\geq& m\left[J_{1}(\rho, s+\xi)-l_{1}\xi\right],\label{B5}\\	
			\max_{\mS \subset[m],  |\mS|\leq s m}\frac{\lV\lv\mA_{\mS}\pmb{x}\rv^{2}-\lv\mA_{\mS}\pmb{y}\rv^{2}\rV_{1}}{\mathrm{dist}_{2}\left(\x,\y\right)}
			&\leq& m \left[J_{2}(\rho, s+\xi)+l_{1}\xi \right].\label{B6}
		\end{eqnarray}
	\end{lemma}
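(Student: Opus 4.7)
The plan is to mirror the three-step structure already used in the proof of Lemma \ref{A}, adapting each ingredient to the intensity random variable $\lv Z_\rho \rv = \lv X\cdot Y\rv$ whose PDF $f_\rho$ is given in \eqref{|xy|}. Fix $\x,\y$ and write $\pmb{u} = (\x-\y)/\lV\x-\y\rV_2$, $\pmb{v} = (\x+\y)/\lV\x+\y\rV_2$, $X_i = \langle\pmb{a}_i,\pmb{u}\rangle$, $Y_i = \langle\pmb{a}_i,\pmb{v}\rangle$, so that $\lV\lv\mA\x\rv^2-\lv\mA\y\rv^2\rV_1/\mathrm{dist}_{2}(\x,\y) = \sum_{i=1}^m \lv X_i Y_i\rv$ with each pair $(X_i,Y_i)$ jointly standard Gaussian of correlation $\rho=\langle\pmb{u},\pmb{v}\rangle$. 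The quantities $J_1(\rho,s+\xi)$ and $J_2(\rho,s+\xi)$ are exactly $\mathbb{E}[\lv Z_\rho\rv\cdot\mathbbm{1}_{\{\lv Z_\rho\rv\leq F_\rho^{-1}(1-s-\xi)\}}]$ and $\mathbb{E}[\lv Z_\rho\rv\cdot\mathbbm{1}_{\{\lv Z_\rho\rv>F_\rho^{-1}(1-s-\xi)\}}]$, so everything reduces to an empirical-process statement about these truncated sums.

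For \eqref{B5}, introduce $h_1(x) = x\cdot\mathbbm{1}_{[0,t]}(x)$ with $t = F_\rho^{-1}(1-s-\xi)$, and set $\Gamma_i(t)=h_1(\lv X_iY_i\rv)$. Because $\Gamma_i(t)\in[0,t]$ is bounded, it is sub-Gaussian with norm at most $Ct$, and Hoeffding's inequality gives $\lv \tfrac{1}{m}\sum_i\Gamma_i(t)-J_1(\rho,s+\xi)\rv \leq l_1\xi/2$ off an event of probability $\mathcal{O}(e^{-c_1 m\xi^2})$. Next I would invoke the Dvoretzky–Kiefer–Wolfowitz inequality (Lemma \ref{DKW}) exactly as in Lemma \ref{A}: with $\epsilon=\xi$ and $\eta=1-s$, the sample CDF $\widehat F_\rho$ at $t$ satisfies $\widehat F_\rho(t)\leq 1-s$ off an event of probability $4e^{-2m\xi^2}$. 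This forces at most an $s$-fraction of $\lv X_iY_i\rv$ to exceed $t$, so the smallest $(1-s)m$ absolute values sum to at least $\sum_i \Gamma_i(t)$, yielding \eqref{B5} after a union bound.

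For \eqref{B6} the truncation is the opposite tail: take $h_2(x)=x\cdot\mathbbm{1}_{[t,+\infty)}(x)$ and $\Gamma_i(t)=h_2(\lv X_iY_i\rv)$. The difference with the amplitude case is that $\Gamma_i(t)$ is no longer bounded, because $\lv XY\rv$ is only sub-exponential. Using $\lv XY\rv\leq (X^2+Y^2)/2$, one sees that $\lVert \lv XY\rv\rVert_{\psi_1}$ is bounded by an absolute constant uniformly in $\rho\in[0,1]$, so $\lVert \Gamma(t)\rVert_{\psi_1}$ is likewise controlled, and Bernstein's inequality provides the required $\mathcal{O}(e^{-c_1 m\xi^2})$ concentration. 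The same DKW-based argument, now applied to the upper tail (taking $t=F_\rho^{-1}(1-s+\xi)$, so that at least $(1-s)m$ samples fall below $t$), shows that the largest $sm$ absolute values sum to at most $\sum_i \Gamma_i(t)$, which concentrates around $mJ_2(\rho,s+\xi)$.

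The main obstacle is the analogue of the $\psi_2$-uniformity step from Lemma \ref{A}: there, a stereographic-projection/compactness argument was used to bound $\lVert\Gamma(t)\rVert_{\psi_2}$ uniformly in $t\in\mathbb{R}\cup\{\infty\}$ and $(\rho,\alpha)$. Here I need the analogous uniformity for the two families $h_1(\lv Z_\rho\rv)$ (sub-Gaussian, norm growing linearly in $t$) and $h_2(\lv Z_\rho\rv)$ (sub-exponential, uniformly in $\rho$). The former follows from boundedness and the latter from the $X^2+Y^2$ domination above, but some care is needed because the Bernstein bound has two regimes; one must check that in the regime relevant to $\varepsilon=l_1\xi$ with $\xi$ small, the sub-Gaussian regime of Bernstein is active, preserving the exponent $e^{-c_1 m\xi^2}$ rather than degrading to $e^{-c_1 m\xi}$. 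Once this is handled, the final union bound assembles \eqref{B5} and \eqref{B6} simultaneously at the claimed probability.
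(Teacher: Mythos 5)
Your overall strategy is the same as the paper's: truncate $\lv X_iY_i\rv$ at a population quantile of $\lv Z_\rho\rv$, concentrate the truncated empirical mean (Hoeffding for the bounded lower truncation, Bernstein for the sub-exponential upper tail, with the $\psi_1$-norm controlled uniformly in $\rho$ via $\lv XY\rv\le(X^2+Y^2)/2$), and use the Dvoretzky--Kiefer--Wolfowitz inequality to compare the extremal order statistics with the truncated sums. Your remark that one must check the sub-Gaussian regime of Bernstein is active for $\varepsilon_1=l_1\xi$ is exactly how the paper preserves the $e^{-c_1m\xi^2}$ rate (it chooses $l_1$ small enough that $\varepsilon_1^2/K_0^2\le\varepsilon_1/K_0$). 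Two points need correcting, one cosmetic and one substantive. First, in the argument for \eqref{B5} the sentence ``this forces at most an $s$-fraction of $\lv X_iY_i\rv$ to exceed $t$'' is backwards: $\widehat F_\rho(t)\le 1-s$ says that at most a $(1-s)$-fraction of the samples lie in $[0,t]$, hence at least an $s$-fraction exceeds $t$. Your displayed inequality and conclusion are nevertheless correct, since what the argument actually uses is that the samples in $[0,t]$ number at most $(1-s)m$ and are the smallest ones, so the sum of the smallest $(1-s)m$ order statistics dominates $\sum_i\Gamma_i(t)$.

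Second, and substantively: for \eqref{B6} your choice $t=F_\rho^{-1}(1-s+\xi)$ points the wrong way. With that threshold DKW gives $\widehat F_\rho(t)\ge 1-s$, i.e.\ at most $sm$ samples lie in $[t,+\infty)$; the largest $sm$ samples then consist of all of those plus possibly some samples below $t$, so one only gets $\max_{\lv\mS\rv\le sm}\sum_{i\in\mS}\lv X_iY_i\rv\ \ge\ \sum_i h_2\left(\lv X_iY_i\rv\right)$ --- the reverse of what you need, and the discrepancy is an additive term of order $sm\cdot t$, not $m\xi$. The correct choice is the same threshold as in \eqref{B5}, namely $t=F_\rho^{-1}(1-s-\xi)$: on the DKW event $\widehat F_\rho(t)\le 1-s$ at least $sm$ samples exceed $t$, so the largest $sm$ samples all lie in $[t,+\infty)$ and form a subset of $\{i:\lv X_iY_i\rv\ge t\}$, whence $\max_{\lv\mS\rv\le sm}\sum_{i\in\mS}\lv X_iY_i\rv\le\sum_i h_2\left(\lv X_iY_i\rv\right)$, and $\mathbb{E}\,h_2\left(\lv Z_\rho\rv\right)=\int_t^{+\infty}zf_\rho(z)\,dz=J_2(\rho,s+\xi)$ as required. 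With this one change your proof coincides with the paper's.
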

		
	{\bf Step 3: Uniform  argument.}
	By Lemma \ref{B}, we have that for fixed $\x_{0},\y_{0}\in \mathcal{K}$ with $\rho_{0}=\langle \frac{\x_{0}-\y_{0}}{\lV\x_{0}-\y_{0}\rV_{2}},\frac{\x_{0}+\y_{0}}{\lV\x_{0}+\y_{0}\rV_{2}} \rangle:=\langle \pmb{u}_{0},\pmb{v}_{0}\rangle$, with probability at least $1 - \mathcal{O}\left(e^{- c_{1} m\xi^2}\right)$,
	\begin{equation*}\label{B1}
	\begin{split}
		&\min_{\mS \subset[m],  |\mS|\leq s m} \frac{\lV\lv\mA_{{\mS}^{c}}\x_{0}\rv^{2}-\lv\mA_{{\mS}^{c}}\y_{0}\rv^{2}\rV_{1}-\lV\lv\mA_{{\mS}}\x_{0}\rv^{2}-\lv\mA_{{\mS}}\y_{0}\rv^{2}\rV_{1}}{\mathrm{dist}_{2}\left(\x_{0},\y_{0}\right)}\\
		&= \min_{\mS \subset[m],  |\mS|\leq s m} \left[\sum_{i\in{\mS}^{c}} \lv \langle\pmb{a}_{i}, \pmb{u}_{0}\rangle \cdot\langle\pmb{a}_{i}, \pmb{v}_{0}\rangle \rv-\sum_{i\in\mS} \lv \langle\pmb{a}_{i}, \pmb{u}_{0}\rangle \cdot\langle\pmb{a}_{i}, \pmb{v}_{0}\rangle \rv\right]\\
		&\ge m\left[J_{1}\left(\rho_{0},s+\xi\right)-J_{2}\left(\rho_{0},s+\xi\right)-2l_{1}\xi\right]\\
		&\ge m\left[\mathcal{J}\left(s+\xi\right)-2l_{1}\xi\right].
		\end{split}
	\end{equation*}
	
	As we can see $\pmb{u}_{0}\in \left(\mathcal{K}-\mathcal{K}\right)\cap\mathbb{S}^{n-1}$ and $\pmb{v}_{0}\in \left(\mathcal{K}+\mathcal{K}\right)\cap\mathbb{S}^{n-1}$.
	Let $\mathcal{K}^{-}_{\delta },\mathcal{K}^{+}_{\delta }$ be the $\delta$-net of $\text{cone}\left(\mathcal{K}-\mathcal{K}\right)\cap\mathbb{S}^{n-1}$ and $\text{cone}\left(\mathcal{K}+\mathcal{K}\right)\cap\mathbb{S}^{n-1}$. 

	Now for all $\pmb{u}\in\left(\mathcal{K}-\mathcal{K}\right)\cap\mathcal{S}^{n},\pmb{v}\in \left(\mathcal{K}+\mathcal{K}\right)\cap \mathcal{S}^{n}$, there exist $\pmb{u}_{0}\in\mathcal{K}^{-}_{\delta },\pmb{v}_{0}\in\mathcal{K}^{+}_{\delta }$, such that $\lV\pmb{u}-\pmb{u}_{0}\rV_{2}\le\delta, \lV\pmb{v}-\pmb{v}_{0}\rV_{2}\le\delta$. Then set $\mS_{1}$ be the index set of the largest $s$-fraction of $\mS_{\pmb{u},\pmb{v}}=\{\langle\pmb{a}_{i}, \pmb{u}\rangle \cdot\langle\pmb{a}, \pmb{v}\rangle  | i\in[m] 
	\}$ in absolute value.  
	Similarly, $\mS_{2}$ be the subset of $\mS_{\pmb{u}_{0},\pmb{v}_{0}}$, collecting the index of the largest $s$-fraction of $\mS_{\pmb{u}_{0},\pmb{v}_{0}}$ in absolute 
	value.
	Then we have
	\begin{equation*}
	\begin{split}
		\min_{\mS \subset[m],  |\mS|\leq s m}
		&\frac{\lV\lv\mA_{{\mS}^{c}}\x\rv^{2}-\lv\mA_{{\mS}^{c}}\y\rv^{2}\rV_{1}}{\mathrm{dist}_{2}\left(\x,\y\right)}
		=\frac{\lV\lv\mA_{{\mS}_{1}^{c}}\x\rv^{2}-\lv\mA_{{\mS}_{1}^{c}}\y\rv^{2}\rV_{1}}{\mathrm{dist}_{2}\left(\x,\y\right)}
		=\sum_{i\in{\mS}_{1}^{c}} \lv \langle\pmb{a}_{i}, \pmb{u}\rangle \cdot\langle\pmb{a}_{i}, \pmb{v}\rangle \rv\\
		&\ge\sum_{i\in{\mS}_{2}^{c}} \lv \langle\pmb{a}_{i}, \pmb{u}_{0}\rangle \cdot\langle\pmb{a}_{i}, \pmb{v}_{0}\rangle \rv-
		\sum_{i\in{\mS}_{1}^{c}} \lv \langle\pmb{a}_{i}, \pmb{u}\rangle \cdot\langle\pmb{a}_{i}, \pmb{v}\rangle- \langle\pmb{a}_{i}, \pmb{u}_{0}\rangle \cdot\langle\pmb{a}_{i}, 
		\pmb{v}_{0}\rangle\rv\\
		&=\frac{\lV\lv\mA_{{\mS}_{2}^{c}}\x_{0}\rv^{2}-\lv\mA_{{\mS}_{2}^{c}}\y_{0}\rv^{2}\rV_{1}}{\mathrm{dist}_{2}\left(\x_{0},\y_{0}\right)}
		-\sum_{i\in{\mS}_{1}^{c}} \lv \langle\pmb{a}_{i}\pmb{a}^{*}_{i}, \pmb{u}\pmb{v}^{*}-\pmb{u}_{0}\pmb{v}_{0}^{*}\rangle\rv\\
		&=	\min_{\mS \subset[m],  |\mS|\leq s m} \frac{\lV\lv\mA_{{\mS}^{c}}\x_{0}\rv^{2}-\lv\mA_{{\mS}^{c}}\y_{0}\rv^{2}\rV_{1}}{\mathrm{dist}_{2}\left(\x_{0},\y_{0}\right)}
		-\sum_{i\in{\mS}_{1}^{c}} \lv \langle\pmb{a}_{i}\pmb{a}^{*}_{i}, \pmb{u}\pmb{v}^{*}-\pmb{u}_{0}\pmb{v}_{0}^{*}\rangle\rv.
		\end{split}
	\end{equation*}
	Similarly, we can get
	\begin{equation*}
	\begin{split}
		&\max_{\mS \subset[m],  |\mS|\leq s m} \frac{\lV\lv\mA_{{\mS}}\x\rv^{2}-\lv\mA_{{\mS}}\y\rv^{2}\rV_{1}}{\mathrm{dist}_{2}\left(\x,\y\right)}\\
		&\le\max_{\mS \subset[m],  |\mS|\leq s m} \frac{\lV\lv\mA_{{\mS}}\x_{0}\rv^{2}-\lv\mA_{{\mS}}\y_{0}\rv^{2}\rV_{1}}{\mathrm{dist}_{2}\left(\x_{0},\y_{0}\right)}
		-\sum_{i\in{\mS}_{1}} \lv \langle\pmb{a}_{i}\pmb{a}^{*}_{i}, \pmb{u}\pmb{v}^{*}-\pmb{u}_{0}\pmb{v}_{0}^{*}\rangle\rv.
		\end{split}
	\end{equation*}
	Before presenting our final conclusion, we provide the following lemma. 
	\begin{lemma}\label{B4}
		For any $\pmb{u},\pmb{v}\in\mathbb{S}^{n-1}$ and $\pmb{a}_{i}\stackrel{i.i.d}{\sim}\mathcal{N}\left(0,\pmb{I}_{n}\right)$, we have
		\begin{eqnarray}
			\mathbb{P}\left(\frac{1}{m}\sum_{i=1}^{m}\lv \langle\pmb{a}_{i}\pmb{a}^{*}_{i}, \pmb{u}\pmb{v}^{*}\rangle\rv\ge1+\epsilon\right)\le2\exp\left[-c_{1}
			m\min\left(\frac{\epsilon^{2}}{K_{1}^{2}},\frac{\epsilon}{K_{1}}\right)\right],
		\end{eqnarray}
		where $K_{1}=\sqrt{8/3}$ and $c_{1}$ is a positive numerical constant.
	\end{lemma}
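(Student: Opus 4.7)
My plan is to recognize the summand as a product of two correlated standard Gaussians, bound its sub-exponential norm, and invoke Bernstein's inequality. Concretely, I would first use the identity $\langle \pmb{a}_i\pmb{a}_i^{*}, \pmb{u}\pmb{v}^{*}\rangle = \langle \pmb{a}_i, \pmb{u}\rangle \langle \pmb{a}_i, \pmb{v}\rangle$, so that setting $X_i := \langle \pmb{a}_i, \pmb{u}\rangle$ and $Y_i := \langle \pmb{a}_i, \pmb{v}\rangle$ yields $X_i, Y_i \sim \mathcal{N}(0,1)$ (since $\pmb{u}, \pmb{v}\in \mathbb{S}^{n-1}$) with correlation $\rho = \langle \pmb{u}, \pmb{v}\rangle$. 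Cauchy--Schwarz then gives $\mathbb{E}|X_iY_i| \le \sqrt{\mathbb{E}X_i^2 \cdot \mathbb{E}Y_i^2} = 1$, which supplies the centering baseline behind the ``$1$'' in $1+\epsilon$ in the claimed inequality.

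Next I would verify that $|X_iY_i|$ is sub-exponential with norm controlled by $K_1$. A direct moment generating function computation shows that $\lVert X\rVert_{\psi_2} = \sqrt{8/3} = K_1$ for $X \sim \mathcal{N}(0,1)$, which is exactly where the specific constant $K_1$ in the statement comes from, and the standard product bound $\lVert XY\rVert_{\psi_1} \le \lVert X\rVert_{\psi_2}\lVert Y\rVert_{\psi_2}$ implies that the centered variable $Z_i := |X_iY_i| - \mathbb{E}|X_iY_i|$ is sub-exponential with $\psi_1$-norm controlled by $K_1$ up to an absolute constant, uniformly in $\rho$.

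Finally I would apply Bernstein's inequality to the i.i.d.\ centered sum $\frac{1}{m}\sum_i Z_i$ and combine with the mean bound $\mathbb{E}|X_iY_i|\le 1$ to conclude
\begin{eqnarray*}
\mathbb{P}\!\left(\frac{1}{m}\sum_{i=1}^{m} |X_iY_i| \ge 1+\epsilon\right)
&\le& \mathbb{P}\!\left(\frac{1}{m}\sum_{i=1}^{m} Z_i \ge \epsilon\right) \\
&\le& 2\exp\!\left[-c_1 m \min\!\left(\frac{\epsilon^2}{K_1^2}, \frac{\epsilon}{K_1}\right)\right].
\end{eqnarray*}
The only real obstacle is the bookkeeping of constants: I must make sure the denominators in the Bernstein exponent come out as $K_1^2$ and $K_1$ rather than $K_1^4$ and $K_1^2$, which is achieved by absorbing the absolute constant from $\lVert XY\rVert_{\psi_1}\le \lVert X\rVert_{\psi_2}\lVert Y\rVert_{\psi_2}$ into the universal constant $c_1$. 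Beyond that, the argument is a routine application of Vershynin-style concentration machinery and does not use the parameter $\rho$ anywhere except in the trivial mean bound.
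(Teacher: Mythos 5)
Your proposal is correct and follows essentially the same route as the paper: identify $\langle\pmb{a}_i\pmb{a}_i^{*},\pmb{u}\pmb{v}^{*}\rangle$ as a product $UV$ of two correlated standard Gaussians, bound $\lVert UV\rVert_{\psi_1}\le\lVert U\rVert_{\psi_2}\lVert V\rVert_{\psi_2}=8/3$, bound the mean by $1$, and apply Bernstein's inequality, absorbing the fixed numerical constant $K_1$ into $c_1$. The only (immaterial) difference is that you obtain $\mathbb{E}\lvert UV\rvert\le 1$ via Cauchy--Schwarz, whereas the paper cites the exact formula $\mathbb{E}\lvert Z_\rho\rvert=\frac{2}{\pi}\bigl[\sqrt{1-\rho^2}+\rho\arcsin(\rho)\bigr]$ from its Proposition on $\mathcal{J}(s)$.
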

	\begin{proof}
		It can be seen that $U:=\langle\pmb{a}, \pmb{u}\rangle\sim\mathcal{N}\left(0,1\right),V:=\langle\pmb{a}, \pmb{u}\rangle\sim\mathcal{N}\left(0,1\right)$.
	Furthermore, $\| U\cdot V\|_{\psi_1}\le  \| U\|^{2}_{\psi_2} =\| V\|^{2}_{\psi_2} =8/3$
	and by the proof Proposition \ref{J}.(a) in Appendix \ref{partial}, $\mathbb{E}|U\cdot V|\le 1$. Thus, by Bernstein inequality in \cite{vershynin2018high}, we have reached the conclusion as
	$\langle\pmb{a}\pmb{a}^{*}, \pmb{u}\pmb{v}^{*}\rangle=\langle\pmb{a}, \pmb{u}\rangle\langle\pmb{a}, \pmb{v}\rangle=U\cdot V$.
	\end{proof}
	From the above two inequalities and Lemma \ref{B4}, a unified form can be obtained that
	\begin{equation*}
	\begin{split} 
		&\min_{\mS \subset[m],|\mS|\leq s m} \frac{\lV\lv\mA_{{\mS}^{c}}\x\rv^{2}-\lv\mA_{{\mS}^{c}}\y\rv^{2}\rV_{1}-\lV\lv\mA_{{\mS}}\x\rv^{2}-\lv\mA_{{\mS}}\y\rv^{2}\rV_{1}}
		{\mathrm{dist}_{2}\left(\x,\y\right)}\\
		&\ge\min_{\mS \subset[m],|\mS|\leq s m} \frac{\lV\lv\mA_{{\mS}^{c}}\x_{0}\rv^{2}-\lv\mA_{{\mS}^{c}}\y_{0}\rv^{2}\rV_{1}-\lV\lv\mA_{{\mS}}\x_{0}\rv^{2}-\lv\mA_{{\mS}}
		\y_{0}\rv^{2}\rV_{1}}{\mathrm{dist}_{2}\left(\x_{0},\y_{0}\right)}
		-\sum_{i=1}^{m} \lv \langle\pmb{a}_{i}\pmb{a}^{*}_{i}, \pmb{u}\pmb{v}^{*}-\pmb{u}_{0}\pmb{v}_{0}^{*}\rangle\rv\\
		&\ge m\left[\mathcal{J}\left(s+\xi\right)-2l_{1}\xi\right]-
		\sum_{i=1}^{m} \left[\lv \langle\pmb{a}_{i}\pmb{a}^{*}_{i}, \left(\pmb{u}-\pmb{u}_{0}\right)\pmb{v}^{*}\rangle\rv+\lv \langle\pmb{a}_{i}\pmb{a}^{*}_{i}, \pmb{u}_{0}
		\left(\pmb{v}-\pmb{v}_{0}\right)^{*}\rangle\rv\right]\\	
		&\ge m\left[\mathcal{J}\left(s+\xi\right)-2l_{1}\xi\right]-2\delta\left(1+\epsilon\right)=m\left[\mathcal{J}\left(s+\xi\right)-2l_{0}\xi\right].
		\end{split}
	\end{equation*}
	Here, in the penultimate inequality, we use Lemma \ref{B4} for $\lV\pmb{u}-\pmb{u}_{0}\rV\le\delta,\lV\pmb{v}-\pmb{v}_{0}\rV\le \delta$ and in the last inequality we set $l_{0}\xi=l_{1}\xi+\delta\left(1+\epsilon\right)$.
	
	Finally, we  can finish the proof with probability at least 
	\begin{eqnarray*} 
	\begin{split}
		&1-\left(\mathcal{N}\left(\mathcal{K}^{-}_{\delta},\delta\right)\times\mathcal{N}\left(\mathcal{K}^{+}_{\delta},\delta\right)\right)\left(\mathcal{O}\left(e^{-cm\xi^{2}}\right)+2e^{-c_{1}m\delta^{2}}\right)\\
		\ge&1-C_{0} \exp\left[w^{2}\left(\text{cone}\left(\mathcal{K}\right)\cap\mathbb{S}^{n-1}\right)/\xi^{2}\right]\cdot \mathcal{O}\left(e^{-c_{2}m\xi^{2}}\right)
		=1-\mathcal{O}\left(e^{-c_{0}m\xi^{2}}\right),
		\end{split}
	\end{eqnarray*}
	provided $m\gtrsim \xi ^{-4}\cdot w^{2}\left(\text{cone}(\mathcal{K})\cap\mathbb{S}^{n-1}\right)$.

\section{Proof of Main Results}\label{main results}
	
	\subsection{Recovery Guarantee for Amplitude Measurement }\label{proof amp}
	The proof of Theorem \ref{theorem1} is based on the adversarial sparse outlier separation condition and the nonlinear ROB  condition for amplitude measurement.
	\begin{proof}[Proof of Theorem \ref{theorem1}]
	By Lemma \ref{ASOSC1} and Theorem \ref{k=1}, with probability exceeding $1-\mathcal{O}\left(e^{-cm\xi^{2}}\right)$,
	\begin{eqnarray*}
	\mathrm{dist}_{1}\left(\x_{\star},\x_{0}\right)\leq \frac{2}{\mathcal{M}\left(s+\xi\right)-l_{0}\xi}\frac{\lV\pmb{\omega}\rV_1}{m}.
	\end{eqnarray*}
Since $\mathcal{M}\left(s\right)$ is monotonically decreasing, we have $\mathcal{M}\left(s+\xi\right)\textgreater \mathcal{M}\left(s^{*,1}-\xi\right)\textgreater 0$. Based on the proof of Theorem \ref{k=1}, the constant $l_{0}$ can be chosen small enough such that $ \mathcal{M}\left(s^{*,1}-\xi\right)\ge 2l_{0}\xi$, thus $\mathcal{M}\left(s+\xi\right)-l_{0}\xi\ge l_{0}\xi\textgreater 0$. This leads to our conclusion if we set $\widetilde{\mathcal{C}}\left(s\right)=4/\mathcal{M}\left(s^{*,1}-\xi\right)$ and provide $m \gtrsim \xi ^{-4}\cdot w^2\left(\text{cone}\left(\mathcal{K}\right)\cap \mathbb{S}^{n-1}\right)$.
	\end{proof}
	
	The demonstration of Theorem \ref{theorem3} primarily necessitates the construction of counterexamples, we now provide specific construction methods.	
		
	\begin{proof}[Proof of Theorem \ref{theorem3}]
		When $\pmb{\omega}=0$ and $\x\in \mathbb{R}^{n}$, we consider the loss function $$\mathcal{L}_1\left(\x\right)=\lV\lv\mA\x\rv-\pmb{b}\rV_{1}=\lV\lv\mA\x\rv-\lv\mA\x_{0}\rv-\pmb{z}\rV_{1}.$$
	Thus $\mathcal{L}_1\left(\x_{0}\right)=\lV\pmb{z}\rV_{1}$. 
			
			Firstly, we present the following lemma, which is a variant of Lemma \ref{A}.
			\begin{lemma}\label{C}
				Fix $\x,\y\in \mathbb{R}^{n}$ and let $\rho=\frac{\langle\x, \y\rangle}{\lV\x\rV_{2}\cdot\lV\y\rV_{2}}$, $\alpha=\lV\y\rV_{2}/\lV\x\rV_{2}$. 
				For $s \textgreater s^{*,1}$, there exists a positive numerical constant $c$ corresponding to $\tilde{l}$  such that with probability exceeding  $1-\mathcal{O}\left(e^{-cm\xi^{2}}\right)$, we have
				\begin{equation}
				\begin{split}
	\min_{\mS \subset[m],  |\mS|\leq s m}\lV\lv\mA_{{\mS}^{c}}\pmb{x}\rv-\lv\mA_{{\mS}^{c}}\pmb{y}\rv\rV_{1}&-\lV\lv\mA_{\mS}\pmb{x}\rv-\lv\mA_{\mS}\pmb{y}\rv\rV_{1}\\
	&\le m\left[M\left(\rho,\alpha, s-\xi\right)+\tilde{l}\xi/2\right]\cdot\mathrm{dist}_{1}\left(\x,\y\right).
					\end{split}
				\end{equation}
			\end{lemma}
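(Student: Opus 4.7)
The plan is to run the proof of Lemma \ref{A} in reverse: since Lemma \ref{A} established a lower bound on the minimum by exploiting concentration uniformly over all admissible $\mathcal{S}$, here one instead exhibits a single explicit choice of $\mathcal{S}$ — namely the top-$s$ fraction by residual magnitude — which automatically yields an upper bound on the minimum. Write $W_i := \bigl|\lv\langle\pmb{a}_i,\x\rangle\rv - \lv\langle\pmb{a}_i,\y\rangle\rv\bigr|/\mathrm{dist}_1(\x,\y)$, so that $W_i$ is an i.i.d.\ copy of $\lv Z_{\rho,\alpha}\rv/\sqrt{1+\alpha^2-2\alpha\rho}$ with CDF $G_{\rho,\alpha}$. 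Let $\mathcal{S}_1$ denote the index set of the $sm$ largest $W_i$; by definition $\mathcal{S}_1$ is feasible, and the minimum we want to upper bound is at most the value attained at $\mathcal{S}_1$, which is $\sum_{i\in\mathcal{S}_1^c}W_i - \sum_{i\in\mathcal{S}_1}W_i$ times $\mathrm{dist}_1(\x,\y)$.

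The key steps are the following. First, set $r = s-\xi$ and $t = G_{\rho,\alpha}^{-1}(1-r)$. Applying the Dvoretzky–Kiefer–Wolfowitz type inequality (Lemma \ref{DKW}) at scale $\xi/2$, with probability at least $1-4e^{-m\xi^2/2}$ the empirical CDF satisfies $\widehat{G}_{\rho,\alpha}(t) \ge G_{\rho,\alpha}(t)-\xi/2 = 1-s+\xi/2$, so the empirical fraction of $W_i$ exceeding $t$ is at most $s-\xi/2<s$. Consequently $\{i:W_i>t\}\subseteq\mathcal{S}_1$ and $\mathcal{S}_1^c\subseteq\{i:W_i\le t\}$, and since $W_i\ge 0$ this gives the sandwich
\begin{equation*}
\sum_{i\in\mathcal{S}_1^c}W_i \;\le\; \sum_{i=1}^m W_i\,\mathbbm{1}_{W_i\le t}, \qquad \sum_{i\in\mathcal{S}_1}W_i \;\ge\; \sum_{i=1}^m W_i\,\mathbbm{1}_{W_i>t}.
\end{equation*}
Second, apply the same sub-Gaussian/Hoeffding argument used in Lemma \ref{A} to the truncated variables $W_i\mathbbm{1}_{W_i\le t}$ and $W_i\mathbbm{1}_{W_i>t}$ (each inherits the uniform $\psi_2$-bound $K_0$ established there via the $S^1$-compactness argument). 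This yields, with probability $\ge 1-\mathcal{O}(e^{-cm\xi^2})$,
\begin{equation*}
\tfrac{1}{m}\textstyle\sum_i W_i\mathbbm{1}_{W_i\le t} \le M_1(\rho,\alpha,s-\xi)+\tilde{l}\xi/4, \qquad \tfrac{1}{m}\textstyle\sum_i W_i\mathbbm{1}_{W_i>t} \ge M_2(\rho,\alpha,s-\xi)-\tilde{l}\xi/4,
\end{equation*}
since $\mathbb{E}[W\mathbbm{1}_{W\le t}] = M_1(\rho,\alpha,r)$ and $\mathbb{E}[W\mathbbm{1}_{W>t}] = M_2(\rho,\alpha,r)$ by definition of $t$. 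Combining, one gets the required upper bound $m[M(\rho,\alpha,s-\xi)+\tilde{l}\xi/2]\cdot\mathrm{dist}_1(\x,\y)$ after multiplying through by $\mathrm{dist}_1(\x,\y)$.

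The main obstacle is orienting every inequality consistently: because Lemma \ref{A} controlled the minimum from below (requiring a uniform bound over exponentially many $\mathcal{S}$), one is tempted to mimic a union bound here as well, but the correct move is the opposite — commit to one adversarial $\mathcal{S}$ and verify that DKW places its threshold on the favorable side of $G_{\rho,\alpha}^{-1}(1-(s-\xi))$. Once the direction is set, the two truncation concentrations are a direct mirror of the computation already carried out in Step 2 of Lemma \ref{A}, and no new technical ingredient is needed. Finally, combining the probability bounds by a union bound and absorbing the DKW error into $\tilde{l}\xi/2$ produces the stated overall success probability $1-\mathcal{O}(e^{-cm\xi^2})$.
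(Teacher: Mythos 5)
Your argument is correct and is exactly the "variant of Lemma \ref{A}" that the paper has in mind (the paper never writes this proof out): commit to the single feasible set $\mathcal{S}_1$ of the $sm$ largest residuals, use the Dvoretzky--Kiefer--Wolfowitz bound with the quantile shifted to $s-\xi$ so that $\{W_i>t\}\subseteq\mathcal{S}_1$ and $\mathcal{S}_1^c\subseteq\{W_i\le t\}$, and then reuse the uniform $\psi_2$-bound and Hoeffding concentration from Step 2 of Lemma \ref{A} on the two truncated sums. No gaps; the orientation of the DKW step and the sandwich inequalities are handled correctly.
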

Secondly, let $\left(\tilde{\rho}_{1},\tilde{\alpha}\right)=\argmin M\left(\rho,\alpha,s^{*,1}\right)$,
			that is $M\left(\tilde{\rho}_{1},\tilde{\alpha},s^{*,1}\right)=\mathcal{M}\left(s^{*,1}\right)=0$. Without loss of generality, let $\lV\x_{0}\rV_{2}=1$, then set
			 \begin{eqnarray}
			 \x_{\star}=\tilde{\rho}_{1}\tilde{\alpha}\x_{0}+\sqrt{1-\left(\tilde{\rho}_{1}\right)^{2}} \tilde{\alpha}\x_{0}^{\perp},
			 \end{eqnarray}
			 where $\lV\x_{0}^{\perp}\rv_{2}=1$. Thus, $\lV\x_{\star}\rV_{2}=\tilde{\alpha}$ and $\frac{\langle\x_{\star},\x_{0}\rangle}{\lV\x_{\star}\rV_{2}\cdot\lV\x_{0}\rV_{2}}=\tilde{\rho}_{1}$. 			
			
			Thridly, we construct $\pmb{z}$. Let $\mS_{0}$ be the support of the largest $sm$ absolute value of entries in $\lv\mA\pmb{x}_{\star}\rv-\lv\mA\pmb{x}_{0}\rv$.
			We establish the adversarial sparse outlier:
			\begin{eqnarray}
			\pmb{z} =  \lv\mA_{\mS_{0}}\pmb{x}_{\star}\rv-\lv\mA_{\mS_{0}}\pmb{x}_{0}\rv.
			\end{eqnarray}
			Thus, $\mathcal{L}_1\left(\x_{0}\right)=\lV\lv\mA_{\mS_{0}}\pmb{x}_{\star}\rv-\lv\mA_{\mS_{0}}\pmb{x}_{0}\rv\rV_{1}$ and 
			$\mathcal{L}_1\left(\x_{\star}\right)=\lV\lv\mA_{\mS_{0}^{c}}\pmb{x}_{\star}\rv-\lv\mA_{\mS_{0}^{c}}\pmb{x}_{0}\rv\rV_{1}$.
			
			Finally, as $s-2\xi\textgreater s^{1,*}$, there exists $\tilde{l}\textgreater0$ such that 
			$$M\left(\tilde{\rho}_{1},\tilde{\alpha}, s-\xi\right)\textless M\left(\tilde{\rho}_{1},\tilde{\alpha}, s^{*,1}+\xi\right)=-\tilde{l}\xi.$$ 
			By Lemma \ref{C} 
			 , we have
			\begin{equation*}
			\begin{split}
				\mathcal{L}_1\left(\x_{\star}\right)-\mathcal{L}_1\left(\x_{0}\right)&=
				\lV\lv\mA_{\mS_{0}^{c}}\x_{\star}\rv-\lv\mA_{\mS_{0}^{c}}\x_{0}\rv\rV_{1}-
				\lV\lv\mA_{\mS_{0}}\x_{\star}\rv-\lv\mA_{\mS_{0}}\x_{0}\rv\rV_{1}\\
				&=\min_{\mS \subset[m],  |\mS|\leq s m} 
				\lV\lv\mA_{{\mS}^{c}}\x_{\star}\rv-\lv\mA_{{\mS}^{c}}\x_{0}\rv\rV_{1}-
				\lV\lv\mA_{{\mS}}\x_{\star}\rv-\lv\mA_{{\mS}}\x_{0}\rv\rV_{1}\\
				&\le m\left[M\left(\tilde{\rho}_{1},\tilde{\alpha}, s^{*,1}+\xi\right)+\tilde{l}\xi/2\right]\cdot \mathrm{dist}_{1}\left(\x_{\star},\x_{0}\right) \\
				&=\frac{-\tilde{l}m\xi}{2}\cdot\mathrm{dist}_{1}\left(\x_{\star},\x_{0}\right)\textless0.
				\end{split}
			\end{equation*}	
			with probability exceeding  $1-\mathcal{O}\left(e^{-cm\xi^{2}}\right)$.	
		\end{proof}

\subsection{Recovery Guarantee for Intensity Measurement}\label{example}
		The proofs of Theorem \ref{theorem2} and Theorem \ref{theorem4} are similar to the amplitude case. 
	\begin{proof}[Proof of Theorem \ref{theorem2}]
	By Lemma \ref{ASOSC1} and Theorem \ref{k=2}, with probability exceeding $1-\mathcal{O}\left(e^{-cm\xi^{2}}\right)$,
	\begin{eqnarray*}
	\mathrm{dist}_{2}\left(\x_{\star},\x_{0}\right)\leq \frac{2}{\mathcal{J}(s+\xi)-l_{0}\xi}\frac{\lV\pmb{\omega}\rV_1}{m}.
	\end{eqnarray*}
	$\mathcal{J}\left(s\right)$ is monotonically decreasing, then we have $\mathcal{J}\left(s+\xi\right)\textgreater \mathcal{J}\left(s^{*,2}-\xi\right)\textgreater 0$. 
	The constant $l_{0}$ can be chosen small enough to make $ \mathcal{J}\left(s^{*,2}-\xi\right)\ge 2l_{0}\xi$, thus $\mathcal{J}\left(s+\xi\right)-l_{0}\xi\ge l_{0}\xi\textgreater 0$. 
	If $m \gtrsim \xi ^{-4}\cdot w^2\left(\text{cone}\left(\mathcal{K}\right)\cap \mathbb{S}^{n-1}\right)$ and $\widetilde{\mathcal{C}}\left(s\right)=4/\mathcal{J}\left(s^{*,2}-\xi\right)$, we then can finish the proof.
	\end{proof}
		
		\begin{proof}[Proof of Theorem \ref{theorem4}]
		Let $\mathcal{L}_2\left(\x\right)=\lV\lv\mA\x\rv-\lv\mA\x_{0}\rv^{2}-\pmb{z}\rV_{1}$.
			And set $\tilde{\rho}_{2}=\argmin J\left(\rho,s^{*,2}\right)$, thus $J\left(\tilde{\rho}_{2},s^{*,2}\right)=\mathcal{J}\left(s^{*,2}\right)=0$. 
			Then let $\x_{\star}$ be the vector that satisfies $\tilde{\rho}_{2}=\langle\frac{\x_{0}-\x_{\star}}{\lV\x_{0}-\x_{\star}\rV_{2}},\frac{\x_{0}+\x_{\star}}{\lV\x_{0}+\x_{\star}\rV_{2}}\rangle$. 
			Set $\mS_{0}$ be the support of the largest $sm$ absolute value of entries in $\lv\mA\pmb{x}_{\star}\rv^{2}-\lv\mA\pmb{x}_{0}\rv^{2}$.
			Then the adversarial sparse outlier we construct here is 
			\begin{eqnarray}
			\pmb{z} =  \lv\mA_{\mS_{0}}\pmb{x}_{\star}\rv^{2}-\lv\mA_{\mS_{0}}\pmb{x}_{0}\rv^{2}.
			\end{eqnarray}
				We next provide the following lemma, which is similar to Lemma \ref{B}. 
			\begin{lemma}\label{D}
				Fix $\x,\y\in \mathbb{R}^{n}$ and let $\rho=\langle\frac{\x-\y}{\lV\x-\y\rV_{2}},\frac{\x+\y}{\lV\x+\y\rV_{2}}\rangle$. 
				For $s \textgreater s^{*,2}$, there exists a positive numerical constant $c$ corresponding to $\tilde{l}$ such that with probability at least $1-\mathcal{O}\left(e^{-cm\xi^{2}}\right)$,	
				\begin{equation*}
				\begin{split}
					\min_{\mS \subset[m],  |\mS|\leq s m}\lV\lv\mA_{{\mS}^{c}}\pmb{x}\rv^{2}-\lv\mA_{{\mS}^{c}}\pmb{y}\rv^{2}\rV_{1}&-\lV\lv\mA_{\mS}\pmb{x}\rv^{2}-\lv\mA_{\mS}\pmb{y}\rv^{2}\rV_{1}\\
					&\leq m\left[J\left(\rho, s-\xi\right)+\tilde{l}\xi/2\right]\cdot\mathrm{dist}_{2}\left(\x,\y\right).
					\end{split}
				\end{equation*}
			\end{lemma}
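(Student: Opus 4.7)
\textbf{Proof proposal for Lemma \ref{D}.} The plan is to mirror the argument of Lemma \ref{B}, but in the reverse direction: instead of lower-bounding the minimum over $\mS$, I will exhibit a specific $\mS_{\star}$ that realizes an upper bound on the minimum. Set $\pmb u = (\x-\y)/\lV\x-\y\rV_{2}$, $\pmb v = (\x+\y)/\lV\x+\y\rV_{2}$, and consider the i.i.d.\ samples $W_{i} := \lv\langle\pmb a_{i},\pmb u\rangle\langle\pmb a_{i},\pmb v\rangle\rv$, whose common density is $f_{\rho}$ from \eqref{|xy|}. Choose $\mS_{\star}$ to be the indices of the largest $sm$ values of $W_{i}$, and let $t_{\star}$ denote the $(sm{+}1)$-th largest $W_{i}$, i.e.\ the empirical $(1{-}s)$-quantile. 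Then
\begin{equation*}
\frac{\lV\lv\mA_{\mS_{\star}^{c}}\x\rv^{2}-\lv\mA_{\mS_{\star}^{c}}\y\rv^{2}\rV_{1}-\lV\lv\mA_{\mS_{\star}}\x\rv^{2}-\lv\mA_{\mS_{\star}}\y\rv^{2}\rV_{1}}{\mathrm{dist}_{2}(\x,\y)}
= 2\sum_{i=1}^{m} W_{i}\mathbbm{1}_{W_{i}\le t_{\star}} - \sum_{i=1}^{m} W_{i}.
\end{equation*}

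Next, I will use the Dvoretzky--Kiefer--Wolfowitz-type inequality (Lemma \ref{DKW}) with $\epsilon=\xi$ applied at the deterministic level $t' := F_{\rho}^{-1}(1-(s-\xi))$: since $F_{\rho}(t') = 1-(s-\xi)$, the event $\widehat F_{\rho}(t') \ge F_{\rho}(t')-\xi = 1-s$ occurs with probability at least $1-4e^{-2m\xi^{2}}$, on which at least $(1-s)m$ samples lie in $[0,t']$, forcing $t_{\star}\le t'$. Hence, on this event,
\begin{equation*}
\sum_{i=1}^{m} W_{i}\mathbbm{1}_{W_{i}\le t_{\star}} \le \sum_{i=1}^{m} W_{i}\mathbbm{1}_{W_{i}\le t'}.
\end{equation*}

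Now I apply concentration separately to the two sums at the deterministic threshold. The truncated variable $W_{i}\mathbbm{1}_{W_{i}\le t'}$ is bounded, hence sub-Gaussian with a uniform $\psi_{2}$-bound (argued exactly as in Step~2 of Lemma \ref{A} via the homeomorphism $\mathbb R\cup\{\infty\}\cong S^{1}$), so Hoeffding's inequality gives
$\tfrac{1}{m}\sum_{i} W_{i}\mathbbm{1}_{W_{i}\le t'} \le \int_{0}^{t'} z f_{\rho}(z)\,dz + \tfrac{\tilde l\xi}{4} = J_{1}(\rho,s-\xi) + \tfrac{\tilde l\xi}{4}$
with probability $1-\mathcal O(e^{-cm\xi^{2}})$. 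For the untruncated sum, $W_{i}$ is sub-exponential with $\|W_{i}\|_{\psi_{1}}\le 8/3$ as shown in the proof of Lemma \ref{B4}, so Bernstein's inequality gives
$\tfrac{1}{m}\sum_{i} W_{i} \ge \mathbb E|XY| - \tfrac{\tilde l\xi}{4}$
with probability $1-\mathcal O(e^{-cm\xi^{2}})$. Since $J_{1}(\rho,s)+J_{2}(\rho,s) = \mathbb E|XY|$ for all $s$, combining the three displayed estimates yields
\begin{equation*}
\tfrac{1}{m}\Big[2\sum_{i} W_{i}\mathbbm{1}_{W_{i}\le t_{\star}} - \sum_{i} W_{i}\Big] \le 2J_{1}(\rho,s-\xi) - \mathbb E|XY| + \tfrac{\tilde l\xi}{2} = J(\rho,s-\xi) + \tfrac{\tilde l\xi}{2},
\end{equation*}
which is exactly the claim with $\mS=\mS_{\star}$, giving the desired bound on the minimum over $\mS$.

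The main obstacle is coupling the two empirical sums, which share the random threshold $t_{\star}$. The decoupling through the deterministic $t'$ is clean in the direction we need because $t_{\star}\le t'$ with high probability implies $\mathbbm{1}_{W_{i}\le t_{\star}}\le \mathbbm{1}_{W_{i}\le t'}$ pointwise, and this monotonicity is what forces $s$ to shift to $s-\xi$ (the opposite direction from Lemma \ref{B}, where the shift is to $s+\xi$). Beyond this, the only subtlety is that the untruncated sum is merely sub-exponential, so Bernstein rather than Hoeffding is required, but the resulting tail still has the form $e^{-cm\xi^{2}}$ for $\xi$ small, so the probability bound remains unchanged.
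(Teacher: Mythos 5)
Your argument is correct and is exactly the reverse-direction mirror of the paper's proof of Lemma \ref{B} (the paper itself omits the proof of Lemma \ref{D}, deferring to that similarity): take $\mS_{\star}$ to be the largest $sm$ values, push the random threshold $t_{\star}$ below the deterministic quantile $F_{\rho}^{-1}\left(1-(s-\xi)\right)$ via Lemma \ref{DKW}, and concentrate the truncated and untruncated sums separately, using $J_{1}+J_{2}=\mathbb{E}\lv XY\rv$ to reassemble $J(\rho,s-\xi)$. Two cosmetic points only: your per-step tolerances of $\tilde{l}\xi/4$ accumulate to $3\tilde{l}\xi/4$ rather than $\tilde{l}\xi/2$ (take $\tilde{l}\xi/6$ per step instead), and the uniform $\psi_{2}$-bound ``for all $t$ up to $+\infty$'' borrowed from Lemma \ref{A} fails for the product $\lv XY\rv$ at $t=+\infty$ (it is only sub-exponential there) --- but you invoke it only at the finite quantile $t'$ and handle the full sum with Bernstein, so nothing breaks.
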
		
			Finally, there exists $\tilde{l} \textgreater0$ such that $J\left(\tilde{\rho}_{2},s-\xi\right)\textless J\left(\tilde{\rho}_{2},s^{*,2}+\xi\right)=-l\xi$, 
		thus by Lemma \ref{D} , we have
			\begin{equation*}
			\begin{split}
				\mathcal{L}_{2}\left(\x_{\star}\right)-\mathcal{L}_{2}\left(\x_{0}\right)&=
				 \lV\lv\mA_{\mS_{0}^{c}}\x_{\star}\rv^{2}-\lv\mA_{\mS_{0}^{c}}\x_{0}\rv^{2}\rV_{1}-
				\lV\lv\mA_{\mS_{0}}\x_{\star}\rv^{2}-\lv\mA_{\mS_{0}}\x_{0}\rv^{2}\rV_{1}\\
				&=\min_{\mS \subset[m],  |\mS|\leq s m} 
				\lV\lv\mA_{{\mS}^{c}}\x_{\star}\rv^{2}-\lv\mA_{{\mS}^{c}}\x_{0}\rv^{2}\rV_{1}-
				\lV\lv\mA_{{\mS}}\x_{\star}\rv^{2}-\lv\mA_{{\mS}}\x_{0}\rv^{2}\rV_{1}\\
				&\le m\left[J\left(\tilde{\rho}_{2}, s^{*,2}+\xi\right)+l\xi/2\right]\cdot\mathrm{dist}_{2}\left(\x_{\star},\x_{0}\right) \\
				&=\frac{-\tilde{l}m\xi}{2}\cdot\mathrm{dist}_{2}\left(\x_{\star},\x_{0}\right)\textless0.
				\end{split}
			\end{equation*}
			with probability exceeding $1-\mathcal{O}\left(e^{-cm\xi^{2}}\right)$.
		\end{proof}
		
\section {Numerical Experiments for Sharp Thresholds}\label{6}
In this section, we conduct experiments to study the fraction of sparse outliers that the two nonlinear LAD model can tolerate even when the sample size is very large. 
For simplicity, we do not take into account the dense noise, i.e., $\pmb{\omega} = \pmb{0}$. We let the ground-truth $\pmb{x}_0 \in \mathbb{R}^{5}$ be a standard Gaussian vector and the adversarial sparse outliers are selected following from the counterexamples in the proof of Theorem \ref{theorem3} and Theorem \ref{theorem4}. We choose the sample size $m = 500n$ and the entries of the sample vectors $\{\pmb{a}_i\}_i$ are i.i.d. drawn from $\mathcal{N}\left(0, \pmb{I}_{n}\right)$. We plot the relative error $\frac{\| \x-\x_0\|_2}{\|\x\|_2}$ with the fraction of sparse noise ranging from $0$ to $1$ in Figure \ref{Fig}. We can observe that it can tolerate no more than an $s^{*,1}\approx 0.2043$ fraction of corruptions in amplitude-based LAD and $s^{*,2}\approx 0.1185$ in intensity-based LAD. It also is clear that amplitude-based LAD is more robust than  intensity-based LAD.
These meet our results in the preceding article.

\begin{figure}[htbp]
\centering 

\begin{minipage}[b]{0.45\textwidth}
\centering 
\includegraphics[width=1\textwidth]{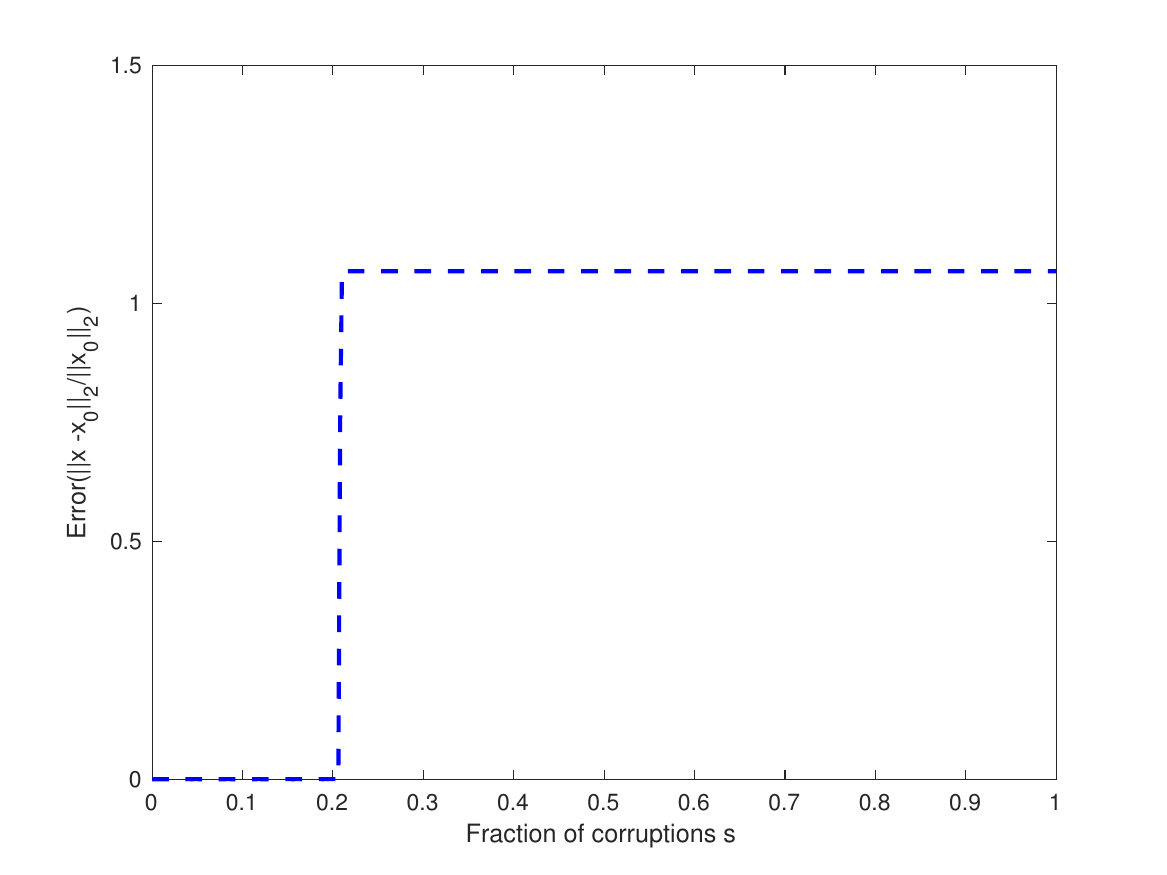} 
\subcaption*{(b) The recovery threshold for nonlinear LAD based on amplitude measurement.}
\label{Fig1}
\end{minipage}
\quad
\begin{minipage}[b]{0.45\textwidth} 
\centering 
\includegraphics[width=1\textwidth]{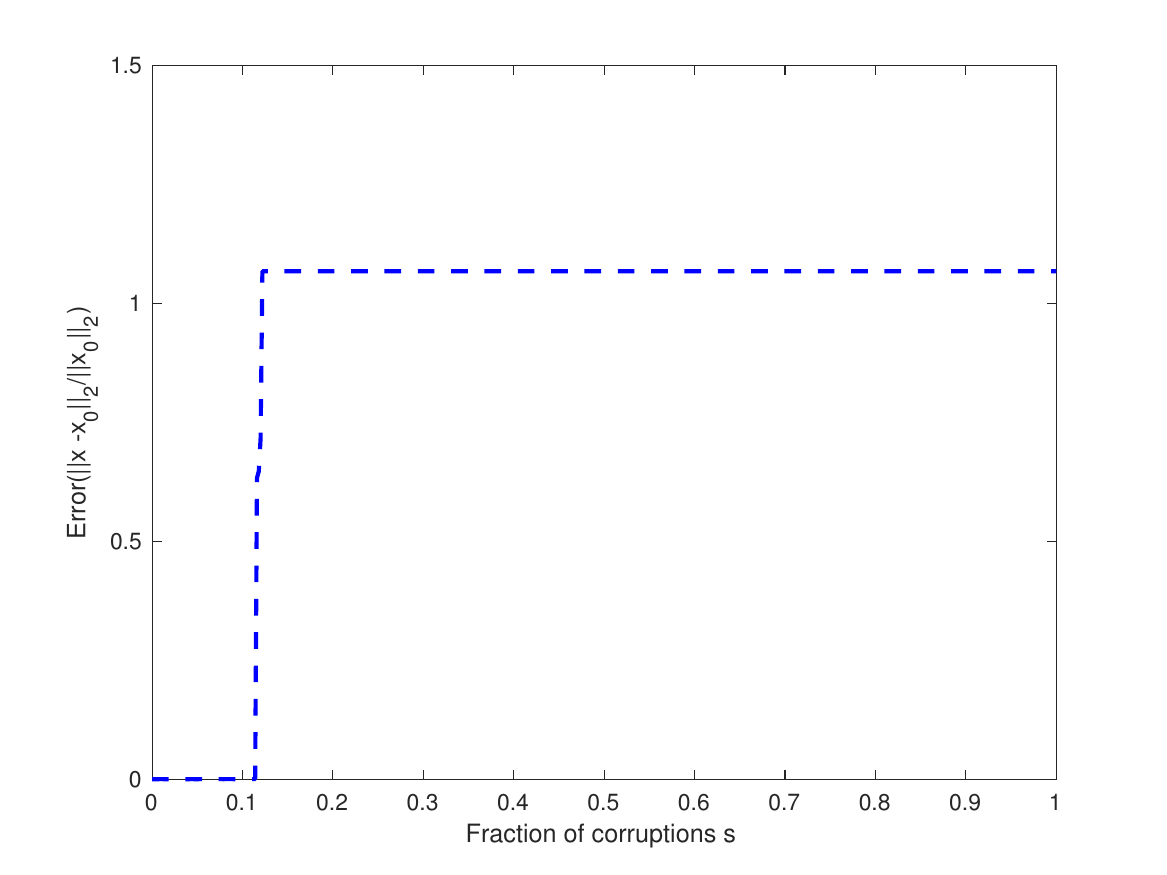}  
\subcaption*{(a) The recovery threshold for nonlinear LAD based on intensity measurement.}

\label{Fig2}
\end{minipage}
\caption{The threshold $s$ for amplitude-based and intensity-based nonlinear LAD model.}
\label{Fig}
\end{figure}

	\appendix
 \section*{Appendix}
	\section{Dvoretzky-Kiefer-Wolfowitz Type Inequality}	\label{conin}	
	The following lemma is a corollary of the standard Dvoretzky-Kiefer-Wolfowitz inequality, similar proof can be found in \cite{karmalkar2019compressed,xu2022low}.	
	\begin{lemma}\label{DKW}
		Let $\Gamma=\{ \xi_1, \dots, \xi_m\}$ be i.i.d. sampled from any continuous distribution function $F$ on $\mathbb{R}$. Then for any $\eta, \epsilon \in [0, 1]$, the following holds with probability at least $1 - 4e^{-2m\epsilon^2}$,
		\begin{eqnarray*}
			F^{-1}\left(\eta- \epsilon\right) < \widehat{F}^{-1}(\eta) < F^{-1}\left(\eta+ \epsilon\right).
		\end{eqnarray*}
		Here, $\widehat{F}$ is the associated empirical distribution function defined by $\widehat{F}\left(x\right) = {1\over m} \sum_{i=1}^m\mathbbm{1}_{\xi_i \leq x}$.
	\end{lemma}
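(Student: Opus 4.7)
The plan is to reduce the statement to the classical Dvoretzky–Kiefer–Wolfowitz inequality with Massart's constant, which says that for i.i.d.\ samples with continuous distribution $F$,
\[
\mathbb{P}\!\left(\sup_{x\in\mathbb{R}} \bigl(\widehat{F}(x)-F(x)\bigr) > \epsilon\right) \le e^{-2m\epsilon^{2}},\qquad
\mathbb{P}\!\left(\sup_{x\in\mathbb{R}} \bigl(F(x)-\widehat{F}(x)\bigr) > \epsilon\right) \le e^{-2m\epsilon^{2}}.
\]
By a union bound I would work on the event $\mathcal{E} := \{\sup_{x}|\widehat{F}(x)-F(x)| \le \epsilon\}$, on which the two-sided sandwich $F(x)-\epsilon \le \widehat{F}(x) \le F(x)+\epsilon$ holds uniformly in $x$. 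It is this sandwich that one needs to transfer from the CDF to the quantile.

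Next I would perform the quantile transfer at two reference points. Set $x_{+} := F^{-1}(\eta+\epsilon)$; since $F$ is continuous, $F(x_{+}) = \eta+\epsilon$, so on $\mathcal{E}$,
\[
\widehat{F}(x_{+}) \ge F(x_{+}) - \epsilon = \eta,
\]
which by the definition of the (generalized) empirical inverse gives $\widehat{F}^{-1}(\eta) \le x_{+} = F^{-1}(\eta+\epsilon)$. Symmetrically, set $x_{-} := F^{-1}(\eta-\epsilon)$, so $F(x_{-}) = \eta-\epsilon$ and on $\mathcal{E}$,
\[
\widehat{F}(x_{-}) \le F(x_{-}) + \epsilon = \eta,
\]
so $\widehat{F}^{-1}(\eta) \ge x_{-} = F^{-1}(\eta-\epsilon)$. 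Together these two estimates give the desired sandwich for $\widehat{F}^{-1}(\eta)$ on $\mathcal{E}$, and the one-sided DKW bounds combine to yield $\mathbb{P}(\mathcal{E}^{c}) \le 2 e^{-2m\epsilon^{2}}$.

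To obtain the stated probability $1 - 4 e^{-2m\epsilon^{2}}$ and convert the non-strict inequalities $\le$ into the strict $<$ required by the statement, I would apply the above argument with $\epsilon$ replaced by $\epsilon/2$ for each of the two directions, or equivalently use the two-sided DKW bound $2e^{-2m\epsilon^{2}}$ separately for the upper and lower tails; the union bound then produces the factor $4$. Strictness follows from the continuity of $F$: if $\eta+\epsilon$ and $\eta-\epsilon$ lie strictly inside $[0,1]$, then $F^{-1}(\eta\pm\epsilon)$ is well-defined and one can separate $\widehat{F}^{-1}(\eta)$ from $F^{-1}(\eta\pm\epsilon)$ by a tiny margin absorbed into $\epsilon$. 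The main (only) delicate point is this strict-versus-non-strict issue, which is why the factor in front of the exponential is $4$ instead of the tight $2$; the DKW inequality itself does the probabilistic heavy lifting, and the rest is purely deterministic manipulation of monotone functions.
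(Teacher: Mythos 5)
Your proposal is correct and follows essentially the same route as the paper: both reduce to the Dvoretzky--Kiefer--Wolfowitz inequality, evaluate the CDF sandwich at the two reference points $F^{-1}(\eta\pm\epsilon)$, and transfer to the quantile by monotonicity of $\widehat{F}$. The only (cosmetic) difference is in the bookkeeping of constants --- you use the one-sided DKW bounds on a single uniform event, which in fact yields the sharper factor $2e^{-2m\epsilon^{2}}$, whereas the paper applies the two-sided bound at each of the two points and union-bounds to get the stated $4e^{-2m\epsilon^{2}}$.
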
 
		\begin{proof}
	Standard Dvoretzky-Kiefer-Wolfowitz inequality \cite{massart1990tight} stated that
	\begin{eqnarray*}
			\mathbb{P}\left(\sqrt{m}\sup_{x}\lv\widehat{F}\left(x\right)-F\left(x\right)\rv\ge\lambda\right)\le2\exp\left(-2\lambda^{2}\right).
		\end{eqnarray*}
		Set $x=F^{-1}\left(\eta- \epsilon\right)$ and $\lambda=\epsilon\sqrt{m}$, then 
		\begin{eqnarray*}
			\mathbb{P}\left(\lv\widehat{F}\left(t\right)-\left(\eta-\epsilon\right)\rv\ge\epsilon\right)\le2\exp\left(-2m\epsilon^{2}\right).
		\end{eqnarray*}
		Then we have
		\begin{eqnarray*}
			\mathbb{P}\left(\widehat{F}\left(t\right)\le\eta\right)\ge1-2\exp\left(-2m\epsilon^{2}\right).
		\end{eqnarray*}
		Monotonicity of  $\widehat{F}\left(x\right) $ then implies the left-side inequality. The right-side inequality follows similarly.
	\end{proof}
	
\section{PDF of $\lv Z_{\rho}\rv=\lv X\cdot Y\rv$}\label{RV}
	We first consider the PDF of $Z_{\rho}=X\cdot Y$. 
		We define the parametric integral as follows:
		\begin{eqnarray*}
			I_{\alpha,\beta}\left(t\right)=\int_{-\infty}^{\infty}\frac{e^{{\rm i}xt}}{\sqrt{\left(x-{\rm i}\alpha\right)\left(x+{\rm i}\beta\right)}} dx,
		\end{eqnarray*}
		where ${\rm i}=\sqrt{-1}, a>0,b>0 $ and $-\infty <t<\infty$ are fixed real numbers. Setting $x=y+\frac{{\rm i}}{2}\left(\alpha-\beta\right)$ in $I_{\alpha,\beta}\left(t\right)$, we then obtain
		\begin{eqnarray*}
			I_{\alpha,\beta}\left(t\right)&=&e^{\frac{\beta-\alpha}{2}t}\int_{-\infty}^{\infty}\frac{e^{{\rm i}ty}}{\sqrt{y^{2}+(\frac{\beta+\alpha}{2})^{2}}}dy\\
			&=&2e^{\frac{\beta-\alpha}{2}t}\int_{0}^{\infty}\cos\left(ty\right)\left[y^{2}+(\frac{\beta+\alpha}{2})^{2}\right]^{-1/2}dy
			=2e^{\frac{\beta-\alpha}{2}t}K_{0}\left(\frac{\beta+\alpha}{2}\lv t\rv\right),\\
		\end{eqnarray*}
		where the  last step stems from the definition of Bessel function of the second kind of order zero $K_{0}\left(x\right)$. Meanwhile, from \cite{craig1936frequency}, the characteristic function of $Z_{\rho}=X\cdot Y$ is 
		\begin{eqnarray*}
			\mathbb{E}e^{{\rm i}tZ}=\frac{1}{\sqrt{\left(1-{\rm i}\left(1+\rho\right)t\right)\left(1+{\rm i}\left(1+\rho\right)t\right)}}.
		\end{eqnarray*}
		Hence, by the inversion theorem, the PDF of $Z_{\rho}$ can be expressed as
		\begin{eqnarray*}\label{rv2}
			f_{Z_{\rho}}\left(z\right)
			&=&\frac{1}{2\pi}\int_{-\infty}^{\infty}\frac{e^{-{\rm i}tz}}{\sqrt{\left(1-{\rm i}\left(1+\rho\right)t\right)\left(1+{\rm i}\left(1+\rho\right)t\right)}}dt\\
			&=&\frac{1}{2\pi\sqrt{1-\rho^{2}}}\int_{-\infty}^{\infty}\frac{e^{-{\rm i}tz}}{\sqrt{\left(t-\frac{1}{{\rm i}\left(1+\rho\right)}\right)\left(t+\frac{1}{{\rm i}\left(1+\rho\right)}\right)}}dt
			=\frac{1}{\pi\sqrt{1-\rho^2}}e^{ \frac{\rho z}{1-\rho^2}}K_0\left( \frac{\lv z\rv}{1-\rho^2} \right).
		\end{eqnarray*}
	Actually when $\rho \to \pm 1$, the variable $Z_{\rho}= X\cdot Y$ converges in distribution to chi-square random variable $\chi^2\left(1\right)$ or $-\chi^2\left(1\right)$. 	
	Finally, due to $f_{\rho}\left(z\right)=f_{Z_{\rho}}\left(z\right)+f_{Z_{\rho}}\left(-z\right)$ we finish the proof.
		
\section{Partial Proofs of Nonlinear ROB Condition}\label{partial}
\subsection{Proof of Proposition \ref{J}}
		\noindent(a)
		The expectation of $\lv Z_{\rho}\rv$ derived from \cite{huang2022outlier} is
		\begin{eqnarray*}
			\mathbb{E}\left[\lv Z_{\rho}\rv\right]=\frac{2}{\pi}\left[\sqrt{1-\rho^{2}}+\rho\arcsin(\rho)\right].
		\end{eqnarray*}
		By definition, $\mathcal{J}\left(0\right)=\min\limits_{\rho \in[0,1]}\mathbb{E}\left[\lv Z_{\rho}\rv \right]$ and $\mathcal{J}\left(1\right)=-\max\limits_{\rho \in[0,1]}\mathbb{E}\left[\lv Z_{\rho}\rv\right]$.
		Let
		\begin{eqnarray*} 
			\phi \left(\rho\right)=\sqrt{1-\rho^{2}}+\rho\arcsin\left(\rho\right).
		\end{eqnarray*}
		Then, by calculation $ \frac{\partial \phi}{\partial\rho} =  \arcsin\left(\rho\right) \geq 0.$
		Thus we can get 
		\begin{eqnarray*} 
			\mathcal{J}\left(0\right)=\frac{2}{\pi}\phi \left(0\right)=\frac{2}{\pi}\quad\text{and}\quad\mathcal{J}\left(1\right)=-\frac{2}{\pi}\phi \left(1\right)=-1.
		\end{eqnarray*}
		The continuity of $\mathcal{J}\left(s\right)$ is obvious. For monotonicity, let $0\le s_{1} \textless s_{2} \le1$ and $\rho_{1} = \arg\min \mathcal{J}\left(s_{1}\right)$, $\rho_{2} = \arg\min \mathcal{J}\left(s_{2}\right)$.
		By definition, 
		\begin{eqnarray*} 
			\mathcal{J}\left(s_{1}\right)=J\left(\rho_{1},s_{1}\right)\textgreater J\left(\rho_{1},s_{2}\right)\ge J\left(\rho_{2},s_{2}\right)=\mathcal{J}\left(s_{2}\right).
		\end{eqnarray*}
		As $\mathcal{J}\left(0\right)$ and $ \mathcal{J}\left(1\right)$ are finite, $\mathcal{J}\left(s\right)$ is a well-defined function.
	
	\noindent(b) 
	The uniqueness of zero can be derived by Proposition \ref{J}.(a). 
	The calculation of $s^{*,2}$ has been given in \cite{huang2022outlier}. 
	Though our minimum balance function $\mathcal{J}\left(s\right)$ is different from that in \cite{huang2022outlier}, the difference is only the coefficient of $\rho$, which does not affect the value of $s^{*,2}$.

\subsection{Proof of Lemma \ref{B}}		
		Let $h_{1}\left(x\right) = x \cdot \mathbbm{1}_{[0,t]}\left(x\right)$, and $\Gamma \left(t\right) = h_{1}\left(\lvert Z_{\rho}\rvert\right)$. 
		Similar to {\bf Step 2} in Theorem \ref{k=1}, we can derive that for all $\rho\in\left[0,1\right]$ and $t\in \mathbb{R}^{+}\cup \{+\infty\}$, 
		there exists $ K_{0}>0$ such that $	\lV\Gamma\left(t\right) \rV_{\psi_{1}} \leq K_{0}$. Let $r\textgreater s\in (0,1)$ be a fixed constant and $t = F_{\rho}^{-1}\left(1-r\right)$. 
		We consider the sampling set $\Gamma=\{\Gamma_{1}\left(t\right),\cdots,\Gamma_{m}\left(t\right)\}$ and get
		\begin{eqnarray*}
			\mathbb{E}\left[\Gamma \left(t\right)\right]=\int_{0 }^{t}
			zf_{\rho }\left(z\right)dz=J_{1}\left(\rho,r\right).
		\end{eqnarray*}	
		Thus by Bernstein inequality in \cite{vershynin2018high}, we have
		\begin{eqnarray}\label{X1}
			\mathbb{P}\left(\lv {1\over m} \sum_{i=1}^m \Gamma_i\left(t\right) - \mathbb{E}\left(\Gamma \left[t\right)\right] \rv >\varepsilon_1\right) \leq 2e^{-c_{0}m\min\left\{\varepsilon_1^2/K_{0}^{2},\varepsilon_1/K_{0}\right\}}.
		\end{eqnarray} 		
		Based on Dvoretzky-Kiefer-Wolfowitz type inequality in Lemma \ref{DKW} and similar to the argument in {\bf Step 2} of Theorem \ref{k=1}, we can get
		\begin{eqnarray}\label{X2}
			\min_{\mS \subset[m],  |\mS|\leq s m}\frac{\lV\lv\mA_{{\mS}^{c}}\pmb{x}\rv^{2}-\lv\mA_{{\mS}^{c}}\pmb{y}\rv^{2}\rV_{1}}{\mathrm{dist}_{2}\left(\x,\y\right)}
			\ge \sum_{i=1}^m \Gamma_i\left(t\right)
		\end{eqnarray} 
		with probability  exceeding $1 - 4e^{-2m\left(r-s\right)^2}$. 
		Combining (\ref{X1}) with (\ref{X2}), we get
		\begin{eqnarray*}
			\frac{1}{m}\min_{\mS \subset[m],  |\mS|\leq s m}\frac{\lV\lv\mA_{{\mS}^{c}}\pmb{x}\rv^{2}-\lv\mA_{{\mS}^{c}}\pmb{y}\rv^{2}\rV_{1}}{\mathrm{dist}_{2}\left(\x,\y\right)}
			\ge \frac{1}{m}\sum_{i=1}^m  \Gamma_i\left(t\right) \ge \mathbb{E}\left[\Gamma \left(t\right)\right] - \varepsilon_1 = J_{1}\left(\rho,s+\epsilon\right)-\varepsilon_1
		\end{eqnarray*}
		with certain probability. 
		By setting $\epsilon =\xi$ and $\varepsilon_1= l_{1}\xi$ for small enough $l_{1}$ such that $\varepsilon_1^{2}/K_{0}^{2}\le\varepsilon_1/K_{0}$, we finally get (\ref{B5}) with probability  at least $1 -\mathcal{O}(e^{-c_{1}m\xi^{2}})$.	
		Similarly, we can establish (\ref{B6}) if we 
		set $h_{2}\left(x\right) = x\cdot \mathbbm{1}_{\left[t,+\infty\right)}\left(x\right)$ and $\Gamma \left(t\right)=h_{2}\left(\lv Z_{\rho}\rv\right)$.

	\normalem
	\bibliographystyle{plain}
	\bibliography{ref}
	
\end{document}